\documentclass[a4paper,10pt,reqno%,dvipdfmx
]{amsart}%{article} %[]

\usepackage{nomencl}
\usepackage{enumitem}
%%% Packages
\usepackage{amssymb,amsmath,amsthm,ascmac,array,bm}
\usepackage[dvipdfmx]{graphicx}
\usepackage{xcolor}

\usepackage{mathtools}
% \mathtoolsset{showonlyrefs=true}

\usepackage{subcaption}\captionsetup{compatibility=false}
\usepackage{multirow}
%\usepackage{times,mathptmx}

%%% pdf etc. links
%\usepackage[dvipdfmx,
%colorlinks=true,bookmarks=true,
%citecolor=blue,
%bookmarksnumbered=true,bookmarkstype=toc,linktocpage=true
%]{hyperref}
\usepackage{cleveref}

%%%%%%%%%%%%%%%%%%%%%
%%%%% HM macros start
%%%%%%%%%%%%%%%%%%%%%
%% Customed environments
\newtheorem{thm}{Theorem}[section]

\newtheorem{lem}[thm]{Lemma}

\newtheorem{rem}[thm]{Remark}
\newtheorem{assump}[thm]{Assumption}

\numberwithin{equation}{section}
\allowdisplaybreaks

\newcommand{\mcl}{\mathcal{L}}

%% Greek letters
 \newcommand{\lam}{\lambda} \newcommand{\ep}{\epsilon}

%% Mathematical expressions

%\newcommand{\ucr}{\rightrightarrows} % <- Uniform convergence
%\newcommand{\ucl}{\leftleftarrows} % <- Uniform convergence
 % <- Weak convergence
\newcommand{\cil}{\xrightarrow{\mcl}} % <- Convergence in law
 % <- Stable convergence in law
\newcommand{\cip}{\xrightarrow{p}} % <- Convergence in probability
 % <- Almost-sure convergence

 % <- Equivalent in law
%\newcommand{\ind}{\perp\!\!\!\perp}
%\newcommand{\argmin}{\mathop{\rm argmin}}
\DeclareMathOperator*{\argmin}{arg\,min}
%\newcommand{\argmax}{\mathop{\rm argmax}}

%% Others

\def\ds#1{\displaystyle{#1}}

\def\nn{\nonumber}

\def\var{{\rm var}}
\def\cov{{\rm cov}}

\def\diag{{\rm diag}}
\def\tr{{\rm tr}}
\def\log{{\rm log}}

\def\sumi{\sum_{i=1}^{n}}

\def\bastinit{\tilde\beta^{\ast(0)}_n}

\def\Zast{Z^\ast}
\def\basttrue{\beta^\ast_0}
\def\gamastn{\Gamma^\ast_n}
\def\bast{\beta^\ast}

% temp.

\bibliographystyle{abbrv}

\newcommand\smallO{
  \mathchoice
    {{\scriptstyle\mathcal{O}}}% \displaystyle
    {{\scriptstyle\mathcal{O}}}% \textstyle
    {{\scriptscriptstyle\mathcal{O}}}% \scriptstyle
    {\scalebox{.7}{$\scriptscriptstyle\mathcal{O}$}}%\scriptscriptstyle
  }
%%%%%%%%%%%%%%%%%%%
%%%%% HM macros end
%%%%%%%%%%%%%%%%%%%

\title[Adaptive Ridge Approach to Heteroscedastic Regression]
{Adaptive Ridge Approach to Heteroscedastic Regression}

\author{Ka Long Keith Ho}
\address{Joint Graduate School of Mathematics for Innovation, Kyushu University, 744 Motooka Nishi-ku Fukuoka 819-0395, Japan}
\email{ho.kalongkeith.224@s.kyushu-u.ac.jp}

\author{Hiroki Masuda}
\address{Graduate School of Mathematical Sciences, 
University of Tokyo, 3-8-1 Komaba Meguro-ku Tokyo 153-8914, Japan}
\email{hmasuda@ms.u-tokyo.ac.jp}

\date{\today}
\keywords{Heteroscedastic Regression, Adaptive Ridge, Asymptotic Theory, Regularization, Location-scale model}

\begin{document}

\title{Adaptive Ridge Approach to Heteroscedastic Regression
}

\begin{abstract}
We propose an adaptive ridge (AR) estimation scheme for a heteroscedastic linear regression model with log-linear noise in data. We simultaneously estimate the mean and variance parameters, demonstrating new asymptotic distributional and tightness properties in a sparse setting. We also show that estimates for zero parameters shrink with more iterations under suitable assumptions for tuning parameters. Aspects of application and possible generalizations are presented through simulations and real data examples.
\keywords{Heteroscedastic Regression, Adaptive Ridge, Asymptotic Theory, Regularized Estimation}

\end{abstract}

\maketitle

\section{Introduction}\label{sec1}
\label{intro}
Consider the following location-scale regression model
\begin{equation}\label{eq1.1}
    Y_i = X_i^T\alpha + e^{\frac{1}{2}Z_i^T\beta}\epsilon_i, \quad  i=1,...,n  
\end{equation}
for one-dimensional responses $Y_i$, covariates $X_i = (X_{i1},...,X_{ip}) \in \mathbb{R}^p$ and $Z_i = (Z_{i1},...,Z_{iq}) \in \mathbb{R}^q$ with log-linear errors. We focus on the underparameterized scheme with $p,q < n$. The model can also be written as 
\begin{equation}\label{eq1.2}
     Y=X\alpha + D_n(Z;\beta)\epsilon,
\end{equation}
where $X \in \mathbb{R}^{n \times p}$ and $Z \in \mathbb{R}^{n \times q}$ are the design matrices, $Y \in \mathbb{R}^n$ is the response, $\epsilon \in \mathbb{R}^n$ is the error and $D_n(Z;\beta)=D_n(\beta):= \diag(e^{\frac{1}{2}Z_1\beta},...,e^{\frac{1}{2}Z_n\beta})$. In practice, $X$ and $Z$ are often the same or similar, but we will denote them separately for explanatory purposes.

Model \eqref{eq1.1} was first studied by Harvey (1976) \cite{Harvey1976} to study multiplicative heteroscedasticity in linear regression models. The exponential form was chosen because it is simple enough to analyse but also sufficiently expressive for the data-dependent noise structure. Among the various fields where models of this form have been considered, it has been particularly prevalent in modelling volatility in finance. For example, see Engle (1982) and Hsieh (1989) for the renowned ARCH and GARCH models. In this work, we are interested in the estimation of $\theta_0 := (\alpha_0,\beta_0) = (\alpha_{01},...,\alpha_{0p},\beta_{01},...,\beta_{0q}) \in \Theta_\alpha \times \Theta_\beta \subseteq \mathbb{R}^{p}\times \mathbb{R}^{q}$ for bounded domains $\Theta_\alpha$ and $\Theta_\beta$ using the adaptive ridge (AR) scheme, first proposed by Frommlet and Nuel (2016) \cite{PLOSONE}. 

The seminal work of Hoerl and Kennard (1970) \cite{Ridge1970} introduced ridge estimators to tackle multicollinearity, an issue that made the ordinary least squares estimator unreliable. Despite its popularity, ridge regularization is often deemed lacking in its limited ability to identify sparse signals. The adaptive ridge procedure of interest leverages the strengths of $l_2$ regularization while providing much-needed refinements in an underlying sparse model using an iteratively re-weighted ridge penalty. Some of its properties and many applications were discussed in Frommlet and Nuel (2016) \cite{PLOSONE}. Dai et al. (2018) \cite{BAR2018} considered letting the number of iterations go to infinity and named their estimator the Broken Adaptive Ridge (BAR) in a linear model, but under a homogeneous noise setting. Sun et al. (2021) \cite{BARcensored} further refined the discussion of BAR in censored data, and recently Abergel et al. (2024) \cite{ARabergel} provided an extensive overview of the adaptive ridge mechanism and discussed its implementation in depth.

The current work aims to detail the asymptotics of the adaptive ridge under multiplicative heteroscedasticity, and in particular provide desirable theoretical guarantees to the scale parameter that was absent in Harvey (1976) and to the authors' understanding, in subsequent works studying model \eqref{eq1.1}. We also formulate results for general noise distributions, generalizing previous work that was restricted to normal errors. In this work, we consider the asymptotics with the number of iterations fixed and then empirically demonstrate the evolution of the estimates as we iterate. This approach allows for more careful treatment towards small non-zero parameters, see for instance Knight and Fu (2000) \cite[Section 3]{KnightFu2000} and Leeb and P\"otscher (2008) \cite{Leeb2008} about perturbations of signals around 0. This contrasts the work of Dai et al. (2018) \cite{BAR2018} for the BAR, whose theoretical guarantees were based on the existence of the fixed point asymptotically and then showing its oracle properties. Results in this paper therefore greatly complement existing literature as we shed light on the evolution of the iterative scheme by inspecting the asymptotic behaviour at each iterate. Ultimately, we hope to present the AR as a computationally efficient estimation method that can be reliably deployed under multiple statistical settings such as multiplicative heteroscedasticity and sparsity, laying the groundwork for future works to explore more complicated models.

We organize the paper as follows. We introduce notation and the setup of the AR procedure in Section \ref{sec2}. In Section \ref{sec3}, we first state the assumptions, and then prove asymptotic distributional results for the initial and iterated estimators of $\alpha$ and $\beta$ in Theorems \ref{thm1} and \ref{thm2}, respectively. Then, we explain the benefits of performing more iterations in Theorem \ref{thm3}. Simulation results are given in Section \ref{sec4} and we apply the AR on two separate datasets in Section \ref{sec5}. We provide concluding remarks and discuss unresolved issues and future lines of research in Section \ref{sec6} and defer our proofs to the Appendices.

\section{Constructing the Adaptive Ridge Estimators}\label{sec2}
We begin by describing the AR procedure, defining the estimators, and introducing the required notation. For a matrix $A$ and vector $v$, $A^T$ and $v^T$ stand for their transposes and the largest and smallest eigenvalues of $A$ are denoted as $\lam_{\min}(A)$ and $\lam_{\max}(A)$. Unless otherwise specified, $\Vert \cdot \Vert$ is used for the Euclidean norm of vectors and the spectral norm of matrices, so that $\Vert A \Vert = \sqrt{\lam_{\max}(A^TA)}$. If $\lam_{\min}(A) > 0$ ($\geq 0$), we will use $A \succ 0$ ($A \succeq 0$) to mean $A$ is positive definite (semidefinite). We will take exponentials, logarithms, and powers of vectors component-wise and use $\cil$ and $\cip$ for convergence in law and convergence in probability respectively. Finally, $X \lesssim Y$ means $X \leq CY$ for some constant $C$ and $X_n \lesssim Y_n$ means that there exists a universal constant $C$ independent of $n$ so that $X_n \leq CY_n$ for every sufficiently large $n$.

Define the diagonal tuning matrix $\Psi_n := \diag(\psi_{n1},...,\psi_{np}) \succeq 0$. The initial ridge estimator corresponding to $\Psi_n$ is
\begin{equation}\label{eq2.1}
\tilde\alpha^{(0)}_n := \argmin_{\alpha \in \mathbb{R}^p} \left\{\|Y-X\alpha\|^2 + \alpha^T\Psi_n\alpha\right\}=(X^TX+\Psi_n)^{-1}X^TY.
\end{equation}
Next, if we write \[L_n(\alpha)= \left( \log(Y_1-X_1^T\alpha)^2, \dots ,\log(Y_n - X_n^T\alpha)^2\right)^T,\]we get 
\begin{equation}\label{eq2.2}
    L_n(\alpha_0) - Z\beta_0 = \log(\epsilon^2).
\end{equation}
Assuming $\mathbb{P}[\epsilon_i = 0] = 0$ for $i = 1,...,n$ (see Assumption \ref{(A3)}), \eqref{eq2.2} is well-defined with probability one. Subtracting the unknown expectation
\begin{equation}\nn
c_0 := \mathbb{E}\left[\log(\epsilon^2)\right]    
\end{equation}
on both sides then yields
\[
L_n(\alpha_0) - Z\beta_0 - c_0 = \log(\epsilon^2) - c_0,
% L_n(\alpha_0) - Z\beta_0 - \mathbb{E}\left[\log(\epsilon^2)\right] = \log(\epsilon^2) - \mathbb{E}\left[\log(\epsilon^2)\right],
\]
which enables us to consider a ridge-type estimator for $\beta$ by considering the zero-mean errors $ \log(\epsilon^2) - c_0$. Denote
\begin{center}
    $\Zast  :=  (Z,\mathbf{1}_n)$ and $\basttrue := (\beta_0^T,c_0)^T
    % =\left(\beta_0^T,\mathbb{E}\left[\log(\epsilon_1^2)\right]\right)^T
    $,
\end{center}
where $\mathbf{1}_n \in \mathbb{R}^n$ denotes a column of 1's. We define the initial ridge estimator of $\basttrue \in \mathbb{R}^{q+1}$ to be
\begin{equation}\label{eq2.3}
\begin{split}
        \tilde\beta^{\ast(0)}_n = \left(\tilde\beta^{(0)}_n,\tilde c^{(0)}_n\right) &\coloneqq \argmin_{\beta^\ast \in \mathbb{R}^{q+1}} \left\{\Vert L_n(\tilde\alpha_n^{(0)})-\Zast \beta^\ast\Vert^2 + \beta^{\ast T}\Omega^\ast_n\beta^\ast\right\} \\
        &= \left(Z^{\ast T} \Zast + \Omega^\ast_n \right)^{-1}Z^{\ast T}L_n(\tilde\alpha_n^{(0)}),
\end{split}
\end{equation}
where $\Omega^\ast_n \coloneqq \diag(\omega_{n1},...,\omega_{nq},\omega_{n,q+1}) \succeq 0$ is the diagonal tuning parameter matrix associated with the initial estimate of $\bast$. We will first show the asymptotic normality of $\tilde\alpha_n^{(0)}$, and then depending on the strictness of conditions placed on $\epsilon$, the tightness or asymptotic normality of $\bastinit$ as well.

\begin{rem}
We assume the matrix $Z$ does not contain a column of $1$'s, which is crucial as we wish to estimate $c_0$, so an additional intercept term poses an identifiability issue. As mentioned in Harvey (1976) \cite{Harvey1976}, if a column of 1's is present, we lose consistency when estimating the intercept term of $\beta$, as there will be bias contributed by the term $c_0$ but the remaining parameters stay unaffected. If the noise structure is assumed, then the expectation $c_0$ can be computed and the problem setup can be simplified.
\end{rem}
Using this estimate for $\beta$, we proceed to introduce an adaptively weighted ridge estimator for $\alpha$. Let \[T(\alpha) \coloneqq \diag(\alpha_1^{-2},...,\alpha_p^{-2}).\]For tuning matrix $\Lambda_n \coloneqq \diag(\lambda_{n1},...,\lambda_{np})$, we define for $k \geq 0$,
\begin{equation}\label{eq2.4}
\begin{split}
    \tilde\alpha_n^{(k+1)} & \coloneqq \argmin_{\alpha \in \mathbb{R}^p} \left \{ \|D_n^{-1}(\tilde\beta_n^{(k)})(Y-X\alpha)\|^2 + \alpha^T\Lambda_n T(\tilde\alpha_n^{(k)})\alpha \right \} \\
 & = \left(X^TD^{-2}_n(\tilde\beta_n^{(k)})X+\Lambda_nT(\tilde\alpha_n^{(k)})\right)^{-1}X^TD^{-2}_n(\tilde\beta_n^{(k)})Y.
\end{split}
\end{equation}
We then repeat a process similar to \eqref{eq2.3} upon obtaining an updated estimator for $\alpha$ with adjusted weights in the ridge penalty. Namely, by setting \[S(\bast) := \diag(\beta_1^{-2},...,\beta_q^{-2},c^{-2}),\] for $k \geq 0$ we define an updated estimator of $\beta^\ast$ as follows:
\begin{equation}\label{eq2.5}
    \begin{split}
        \tilde\beta^{\ast(k+1)}_n &= (\tilde\beta^{(k+1)}_n,\tilde c^{(k+1)}_n)\\
        &\coloneqq \argmin_{\bast \in \mathbb{R}^{q+1}} \left\{\Vert L_n(\tilde\alpha_n^{(k+1)})-\Zast \bast\Vert^2 + \beta^{\ast T}\gamastn S(\tilde\beta_n^{\ast(k)})\bast\right\}\\
        &= \left(Z^{\ast T}\Zast +\gamastn S(\tilde\beta_n^{\ast(k)})\right)^{-1}Z^{\ast T}L_n(\tilde\alpha_n^{(k+1)}),
    \end{split}
\end{equation}
where $\gamastn \coloneqq \diag(\gamma_{n1},...,\gamma_{nq},\gamma_{n,q+1})$ is the diagonal tuning matrix for the iterated AR estimate of $\beta^\ast$. 

\begin{rem}
The initial ridge estimators $\tilde\alpha_n^{(0)}$ and $\tilde\beta_n^{(0)}$ and their subsequent iterations are non-zero almost surely, thus ensuring $T(\alpha)$ and $S(\bast)$ to be well-defined. 
\end{rem}

\begin{rem}
    The initial tuning matrices $\Psi_n$ and $\Omega^\ast_n$ are defined separately from the tuning matrices for iterated estimates ($\Lambda_n$ and $\gamastn$) because they typically have different orders. We will discuss them further in Section \ref{sec3}.
\end{rem}

As a result of using $l_2$ regularization, the AR admits closed forms, as in the expressions \eqref{eq2.1}, \eqref{eq2.3}, \eqref{eq2.4}, and \eqref{eq2.5}, which makes AR efficiently computable and scalable with $p$ and $q$. However, the algorithm still involves the division of small, non-zero values in the matrices $T(\alpha)$ and $S(\beta^\ast)$ when the estimates of $\alpha$ and $\beta^\ast$ are small, in turn causing arithmetic overflow (Dai et al.(2018) \cite{BAR2018}). One solution given in Frommlet and Nuel (2016) \cite{PLOSONE} to circumvent this issue is by introducing a small perturbation to $T(\alpha)$ and $S(\beta^\ast)$, so that they are instead computed as 
% $T(\alpha)=\diag\left( (\alpha_1^{2}+\delta_\alpha)^{-1},..,(\alpha_p^{2}+\delta_\alpha)^{-1}\right)$ and $S(\bast) = \diag\left( (\beta_1^{2}+\delta_\beta)^{-1},..,(\beta_q^{2}+\delta_\beta)^{-1},(c^{2}+\delta_\beta)^{-1}\right)$ 
\begin{align}
    T(\alpha) &=\diag\left( (\alpha_1^{2}+\delta_\alpha)^{-1},..,(\alpha_p^{2}+\delta_\alpha)^{-1}\right),
    \nn\\
    S(\bast) &= \diag\left( (\beta_1^{2}+\delta_\beta)^{-1},..,(\beta_q^{2}+\delta_\beta)^{-1},(c^{2}+\delta_\beta)^{-1}\right)
\end{align}
for some small constants $\delta_\alpha, \delta_\beta >0$, which improves numerical stability at the cost of introducing some bias and fitting two extra hyperparameters. 

Later in \Cref{sec4}, we choose to adopt another method proposed by Liu and Li (2016) \cite{LiuLi2016} and Dai et al.(2018) \cite{BAR2018}. By denoting
\[\tilde{X}^{(k)}_n:=D_n^{-1}(\tilde\beta^{(k)}_n)XT^{-\frac{1}{2}}(\tilde\alpha^{(k)}_n) \hspace{0.5cm} \text{and} \hspace{0.5cm} \tilde{Y}^{(k)}_n:=D_n^{-1}(\tilde\beta^{(k)}_n)Y,\] we can write for iterated estimators,
\begin{align*}
    \tilde\alpha^{(k+1)}_n &= \left(X^T D^{-2}_n(\tilde\beta_n^{(k)} )X+\Lambda_nT(\tilde\alpha_n^{(k)})\right)^{-1}X^TD^{-2}_n(\tilde\beta_n^{(k)})Y\\    
    &=\left(T^{\frac{1}{2}}(\tilde\alpha_n^{(k)})\left(T^{-\frac{1}{2}}(\tilde\alpha_n^{(k)})X^TD^{-2}_n(\tilde\beta_n^{(k)})XT^{-\frac{1}{2}}(\tilde\alpha_n^{(k)})+\Lambda_n\right)T^{\frac{1}{2}}(\tilde\alpha_n^{(k)})\right)^{-1}
    \nn\\
    &{}\qquad \times X^TD^{-2}_n(\tilde\beta_n^{(k)})Y\\
    &= T^{-\frac{1}{2}}(\tilde\alpha^{(k)}_n)\left(T^{-\frac{1}{2}}(\tilde\alpha_n^{(k)})X^TD^{-2}_n(\tilde\beta_n^{(k)})XT^{-\frac{1}{2}}(\tilde\alpha_n^{(k)})+\Lambda_n\right)^{-1}
    \nn\\
    &{}\qquad \times T^{-\frac{1}{2}}(\tilde\alpha^{(k)}_n)X^TD^{-2}_n(\tilde\beta_n^{(k)})Y\\
    &= T^{-\frac{1}{2}}(\tilde\alpha^{(k)}_n)\left(\tilde{X}^{(k) T}_n\tilde{X}^{(k)}_n+\Lambda_n\right)^{-1}\tilde{X}^{(k)T}_n\tilde{Y}^{(k)}_n.
\end{align*}
Similarly, by writing \[\tilde{Z}^{\ast(k)}_n:=\Zast S^{-\frac{1}{2}}(\tilde\beta^{\ast(k)}_n),\] 
we get
\[\tilde\beta^{\ast(k+1)}_n=S^{-\frac{1}{2}}(\tilde\beta^{\ast(k)}_n)\left(\tilde{Z}^{\ast(k) T}_n\tilde{Z}^{\ast(k)}_n+\gamastn \right)^{-1}\tilde{Z}^{\ast(k) T}_nL_n(\tilde\alpha^{(k)}_n),\]
allowing us to deal with most stability issues without fitting two extra tuning parameters.

\section{Main results}\label{sec3}
\subsection{Assumptions}\label{sec31}
\begin{assump}\label{(A1)}
There exist positive definite matrices $\Sigma^X$, $\Sigma^X_+$, $\Sigma^X_-$, $\Sigma^Z$ and $\Sigma^{\Zast }$ such that as $n \to \infty$,
\begin{equation}\label{eq3.1}
\begin{gathered}
      \frac{X^TX}{n} \to \Sigma^X, \quad \frac{X^TD_n^2(\beta_0)X}{n} \to \Sigma^X_+, 
      \quad \frac{X^TD_n^{-2}(\beta_0)X}{n} \to \Sigma^X_-, \\
      \frac{Z^TZ}{n} \to \Sigma^Z, \quad \text{ and }\quad \frac{Z^{\ast T}\Zast }{n} \to \Sigma^{\Zast}.
\end{gathered}
\end{equation}
Additionally, there exist constants $K_X > 1$, $K_Z > 1$, and $N\in\mathbb{N}$ for which
\begin{align}
    & \frac{1}{K_X} \leq \inf_{n\ge N} \inf_\beta \lambda_{\min}\left(\frac{X^T D^{-2}_n(\beta)X}{n}\right) \leq  \sup_{n\ge N} \sup_\beta \lambda_{\max}\left(\frac{X^TD^{-2}_n(\beta)X}{n}\right) \leq K_X,
    \nonumber\\
    & \frac{1}{K_Z} \leq \inf_{n\ge N} \lambda_{\min}\left(\frac{Z^{\ast T}\Zast }{n}\right) \leq \sup_{n\ge N} \lambda_{\max}\left(\frac{Z^{\ast T}\Zast }{n}\right) \leq K_Z.
\end{align}
\end{assump}

\begin{assump}\label{(A2)}
    \[\sup_{n} \Vert X_n \Vert < \infty \text{ and }\sup_{n} \Vert \Zast_n \Vert < \infty.\]
\end{assump}

\begin{assump}\label{(A3)}
    The random errors $\epsilon_1, \epsilon_2, ...$ are i.i.d. with $\mathbb{E}\left[\epsilon_1\right] = 0$, $\var[\epsilon_1] = 1$, and $\mathbb{E}\left[|\epsilon_1|^{-s}\right] < \infty$ for all $s \in (-\infty,1)$.
\end{assump}

\begin{assump}\label{(A4)}
    For each $k \geq 0$, there exist some $a \in (0,1)$ and $N$ such that \[\sup_{n>N} \sup_{i} \mathbb{E}\left[\left|e^{\frac{1}{2}Z_i^T\beta_0}\epsilon_i - X_i^T (\tilde\alpha_n^{(k)} - \alpha_0)\right|^{-a}\right] < \infty.\]
\end{assump}

Assumptions \ref{(A1)}, \ref{(A2)} and the mean and variance conditions of Assumption \ref{(A3)} are standard to make. The negative moments condition in Assumption \ref{(A3)} requires the noise distribution not to have a positive mass at $0$, as that prohibits the consideration of logarithms. In addition, this assumption guarantees the existence of logarithmic moments $\mathbb{E}[\log(\epsilon)]$ and $\mathbb{E}[\log(\epsilon^2)]$. Assumption \ref{(A4)} is technical and is admittedly challenging to verify, but is needed for proving asymptotic results of $\beta$, We will later present a heuristic argument as to why we believe this assumption is still reasonable to make. To do so, we refer to Khuri and Casella (2002) \cite{inversemoments}, which establishes a link between negative moments of random variables and their densities (if they exist). Their results and further explanation will be given after stating Theorem \ref{thm1}. 

\subsection{Asymptotic results of $\tilde\alpha_n^{(0)}$ and $\tilde\beta_n^{\ast(0)}$}
We provide an asymptotic result for our initial estimators. As we construct an iterative scheme, the following convergences will play a vital role in asymptotic results for subsequent estimators. The following result for $\alpha$ is well known and is present in literature such as Knight and Fu (2000) \cite{KnightFu2000}. The asymptotic results for $\beta$ are first proved here to the authors' understanding.
\begin{thm}\label{thm1}
Under Assumptions \ref{(A1)} to \ref{(A3)}, if $\Psi_n/\sqrt{n} \to \Psi_0$ as $n \to \infty$ for some constant matrix $\Psi_0$, then $\tilde\alpha_n^{(0)}$ is asymptotically normal:
\begin{equation}\label{eq3.3}
    \sqrt{n}\left(\tilde\alpha^{(0)}_n-\alpha_0 \right) \cil N \left(-(\Sigma^X)^{-1}\Psi_0\alpha_0,\, (\Sigma^X)^{-1}\Sigma^X_+(\Sigma^X)^{-1}\right).
\end{equation} 
If Assumption \ref{(A4)} also holds and $\Omega_n^\ast/\sqrt{n} \to 0$, then $\bastinit$ is a consistent estimator of $\basttrue$, and for all $c \in [0,1/2)$, 
\begin{equation}\label{eq3.4}
    n^c\left(\bastinit - \basttrue\right) = \mathcal{O}_p(1).
\end{equation}
In addition, if $\mathbb{E}\left[|\epsilon_1|^{-(1+\tau)}\right] < \infty$ for some $\tau > 0$, then $\bastinit$ is asymptotically normal:
\begin{equation}\label{eq3.5}
    \sqrt{n}\left(\bastinit - \basttrue\right) \cil N\left(0, \left(\Sigma^{\Zast }\right)^{-1}\var\left[\log(\epsilon_1^2)\right]\right).
\end{equation}
\end{thm}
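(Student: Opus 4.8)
The plan is to read everything off the closed forms \eqref{eq2.1} and \eqref{eq2.3} after substituting the model. First, plugging $Y=X\alpha_0+D_n(\beta_0)\epsilon$ into \eqref{eq2.1} gives
\[
\sqrt{n}\bigl(\tilde\alpha_n^{(0)}-\alpha_0\bigr)=-\Bigl(\tfrac{X^TX+\Psi_n}{n}\Bigr)^{-1}\tfrac{\Psi_n}{\sqrt{n}}\alpha_0+\Bigl(\tfrac{X^TX+\Psi_n}{n}\Bigr)^{-1}\tfrac{1}{\sqrt{n}}X^TD_n(\beta_0)\epsilon .
\]
By Assumption \ref{(A1)} and $\Psi_n/\sqrt{n}\to\Psi_0$ (hence $\Psi_n/n\to0$) the normalized Gram matrix tends to $\Sigma^X$, so the first term tends to $-(\Sigma^X)^{-1}\Psi_0\alpha_0$; the vector $n^{-1/2}X^TD_n(\beta_0)\epsilon=n^{-1/2}\sum_i X_i e^{\frac12 Z_i^T\beta_0}\epsilon_i$ is a sum of independent mean-zero terms with covariance $n^{-1}X^TD_n^2(\beta_0)X\to\Sigma^X_+$, and — using boundedness of $\|X_i\|$ and $Z_i$ from Assumptions \ref{(A1)}--\ref{(A2)} together with $\var[\epsilon_1]=1$ to verify the Lindeberg condition — converges to $N(0,\Sigma^X_+)$. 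Slutsky's theorem then gives \eqref{eq3.3}.

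Next, writing $r_i:=Y_i-X_i^T\alpha_0=e^{\frac12 Z_i^T\beta_0}\epsilon_i$ and using $L_n(\alpha_0)=\Zast\basttrue+\bigl(\log(\epsilon^2)-c_0\mathbf 1_n\bigr)$, the closed form \eqref{eq2.3} yields
\[
\bastinit-\basttrue=-G_n^{-1}\tfrac{\Omega_n^*}{n}\basttrue+G_n^{-1}\tfrac1n Z^{*T}\bigl(\log(\epsilon^2)-c_0\mathbf 1_n\bigr)+G_n^{-1}\tfrac1n Z^{*T}\bigl(L_n(\tilde\alpha_n^{(0)})-L_n(\alpha_0)\bigr),
\]
where $G_n:=n^{-1}(Z^{*T}\Zast+\Omega_n^*)\to\Sigma^{\Zast}$ by Assumption \ref{(A1)} (note $\Omega_n^*/\sqrt{n}\to0$ forces $\Omega_n^*/n\to0$). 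The first term is $O(n^{-1/2})$ since $\Omega_n^*/\sqrt{n}\to0$. For the second, the summands $Z_i^*(\log(\epsilon_i^2)-c_0)$ are i.i.d.\ and mean zero, and Assumption \ref{(A3)} ensures $\E|\log(\epsilon_1^2)|<\infty$ and $\var[\log(\epsilon_1^2)]<\infty$ (the left tail of $\log|\epsilon_1|$ is absorbed by a negative moment $\E[|\epsilon_1|^{-s}]$, $s<1$; the right tail by $\E[\epsilon_1^2]=1$), so the law of large numbers makes it $o_p(1)$ and the Lindeberg CLT gives $n^{-1/2}Z^{*T}(\log(\epsilon^2)-c_0\mathbf 1_n)\cil N\bigl(0,\Sigma^{\Zast}\var[\log(\epsilon_1^2)]\bigr)$.

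The crux is the third term. Componentwise $L_n(\tilde\alpha_n^{(0)})_i-L_n(\alpha_0)_i=2\log\bigl|1-X_i^T\delta_n/r_i\bigr|$ with $\delta_n:=\tilde\alpha_n^{(0)}-\alpha_0=O_p(n^{-1/2})$ (by the first step) and $r_i-X_i^T\delta_n=Y_i-X_i^T\tilde\alpha_n^{(0)}=:\hat r_i$. I split the indices by whether $|X_i^T\delta_n/r_i|\le\tfrac12$. On that set $|\log|1-u||\lesssim|u|$ bounds the contribution by $\lesssim\|\delta_n\|\sum_i|r_i|^{-1}$, and a truncation/Markov argument using $\E[|\epsilon_1|^{-s}]<\infty$ for all $s<1$ (Assumption \ref{(A3)}) and boundedness of $e^{-\frac12 Z_i^T\beta_0}$ shows $\sum_i|r_i|^{-1}=O_p(n^{1+\eta})$ for every $\eta>0$; on the complementary set both $|r_i|\lesssim\|\delta_n\|$ and $|\hat r_i|\lesssim\|\delta_n\|$ are $O_p(n^{-1/2})$, so by Markov there are $O_p(n^{1-a/2})$ such indices for each $a\in(0,1)$, and on them $|\log|r_i||$ and $|\log|\hat r_i||$ are controlled via $\log(1/x)\lesssim x^{-\delta}$ and Hölder, using Assumption \ref{(A3)} for $r_i$ and Assumption \ref{(A4)} with $k=0$ for $\hat r_i$ (the latter being exactly what bounds how close $Y_i-X_i^T\tilde\alpha_n^{(0)}$ can come to $0$). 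Adding the pieces, $\|Z^{*T}(L_n(\tilde\alpha_n^{(0)})-L_n(\alpha_0))\|\lesssim\sum_i\bigl|L_n(\tilde\alpha_n^{(0)})_i-L_n(\alpha_0)_i\bigr|=O_p(n^{1/2+\eta})$ for every $\eta>0$, so the third term is $O_p(n^{-1/2+\eta})$ for every $\eta>0$; with the preceding paragraph this gives both consistency of $\bastinit$ and \eqref{eq3.4}.

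Finally, if $\E[|\epsilon_1|^{-(1+\tau)}]<\infty$ then $\E[|\epsilon_1|^{-1}]<\infty$, so $\sum_i|r_i|^{-1}=O_p(n)$ and the counting/Hölder estimates of the previous paragraph sharpen; the remaining point is that the leading (linear) part of the third term, namely $-2\,G_n^{-1}\bigl(n^{-1}\sum_i Z_i^*X_i^T/r_i\bigr)(\tilde\alpha_n^{(0)}-\alpha_0)$, whose matrix factor now converges by the law of large numbers, must be shown to be $o_p(n^{-1/2})$, after which the third term is $o_p(n^{-1/2})$. Then only the main-noise term survives at rate $\sqrt{n}$, and
\[
\sqrt{n}\bigl(\bastinit-\basttrue\bigr)=G_n^{-1}\,n^{-1/2}Z^{*T}\bigl(\log(\epsilon^2)-c_0\mathbf 1_n\bigr)+o_p(1)\cil(\Sigma^{\Zast})^{-1}N\bigl(0,\Sigma^{\Zast}\var[\log(\epsilon_1^2)]\bigr)=N\bigl(0,(\Sigma^{\Zast})^{-1}\var[\log(\epsilon_1^2)]\bigr),
\]
which is \eqref{eq3.5}. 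The main obstacle throughout is exactly this plug-in error: quantifying how the $\sqrt{n}$-small error of $\tilde\alpha_n^{(0)}$ is amplified by the logarithm when $e^{\frac12 Z_i^T\beta_0}\epsilon_i$ or $Y_i-X_i^T\tilde\alpha_n^{(0)}$ is near $0$ — precisely the role of Assumption \ref{(A4)} — and it also explains why only a rate arbitrarily close to, but not equal to, $\sqrt{n}$ is obtained under Assumption \ref{(A3)} alone, $|\epsilon_1|^{-1}$ being only borderline integrable there.
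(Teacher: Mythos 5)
Your treatment of \eqref{eq3.3} and of the tightness claim \eqref{eq3.4} follows essentially the same route as the paper: the same bias-plus-CLT decomposition for $\tilde\alpha_n^{(0)}$, the same three-term decomposition of $\bastinit-\basttrue$, the same split of the log-remainder $L_n(\tilde\alpha_n^{(0)})-L_n(\alpha_0)$ according to whether the relative perturbation $|X_i^T(\tilde\alpha_n^{(0)}-\alpha_0)/r_i|$ exceeds $\tfrac12$, and the same use of Assumption \ref{(A3)} (via Markov--H\"older) on the small-perturbation indices and of Assumption \ref{(A4)} on the remaining ones. Two minor points: your bound $\sum_i|r_i|^{-1}=\mathcal{O}_p(n^{1+\eta})$ via the comparison $\|\cdot\|_1\le\|\cdot\|_s$ for $s<1$ is a legitimate variant of the paper's direct inequality $|\log|1-u||\,I(|u|\le\tfrac12)\le C_s|u|^s$; and your H\"older steps tacitly require a uniform moment bound $\sup_{n}\mathbb{E}\bigl[\|\sqrt{n}(\tilde\alpha_n^{(0)}-\alpha_0)\|^{s'}\bigr]<\infty$ for some $s'>1$ (the paper's Lemma \ref{lem1}), which you should state and prove rather than gesture at with ``truncation/Markov.''

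The genuine gap is in \eqref{eq3.5}. You correctly isolate the obstruction --- the linear part $-2G_n^{-1}\bigl(n^{-1}\sum_i Z_i^*X_i^T/r_i\bigr)(\tilde\alpha_n^{(0)}-\alpha_0)$ of the plug-in error --- and then state that it ``must be shown to be $o_p(n^{-1/2})$'' without showing it. Your own computation indicates why this is not automatic: under $\mathbb{E}[|\epsilon_1|^{-(1+\tau)}]<\infty$ the matrix factor $n^{-1}\sum_i Z_i^*X_i^T e^{-\frac{1}{2}Z_i^T\beta_0}\epsilon_i^{-1}$ converges to a limit proportional to $\mathbb{E}[\epsilon_1^{-1}]$, which is generically nonzero, while $\tilde\alpha_n^{(0)}-\alpha_0$ is of exact order $n^{-1/2}$ with a nondegenerate Gaussian limit; the product is therefore $\mathcal{O}_p(n^{-1/2})$ and cannot be dismissed as $\smallO_p(n^{-1/2})$ without an additional cancellation argument (e.g.\ $\mathbb{E}[\epsilon_1^{-1}]=0$, as for symmetric noise). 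As written, the final step of your proof is incomplete. For comparison, the paper's proof handles this term by taking $s=1$ in the Markov--H\"older bounds \eqref{eq7.7}--\eqref{eq7.9} and recording the result as $\smallO_p(n^{-1/2})$; your Taylor-expansion viewpoint makes explicit that upgrading the resulting $\mathcal{O}_p(n^{-1/2})$ bound to $\smallO_p(n^{-1/2})$ is precisely the delicate point, so closing your gap honestly would require either an extra hypothesis on $\epsilon_1$ or a correction to the asymptotic covariance.
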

Although we have stated a general result for $\Psi_n/\sqrt{n} \to \Psi_0$, we will focus on $\Psi_n/\sqrt{n} \to 0$ for the remainder of this paper because we require $\tilde\alpha_n^{(0)}$ to be asymptotically unbiased. 

\begin{rem}
    While \eqref{eq3.5} holds only under the condition $\mathbb{E}\left[|\epsilon_1|^{-(1+\tau)}\right] < \infty$, our numerical experiments in Section \ref{hm:ss_moment} suggest that this may be redundant.
\end{rem}
To better understand the negative moments condition needed for \eqref{eq3.5}, as well as Assumptions \ref{(A3)} and \ref{(A4)}, we note the following facts (see Khuri and Casella (2002) \cite{inversemoments} for related detail): 
% we refer to Khuri and Casella (2002) \cite{inversemoments}, which provides a link between negative moments and the density function (when it exists). In summary, Khuri and Casella (2002) \cite{inversemoments} showed that 
if $|\epsilon_1|$ possesses a density function $f$, then 
\begin{enumerate}[label=(KC\arabic*)]
    \item If $f$ is bounded near $0$ then $\mathbb{E}\left[|\epsilon_1|^{-s}\right] < \infty$ for all $0 < s < 1$.
    \item If $f(0) > 0$ then $\mathbb{E}\left[|\epsilon_1|^{-1}\right] = \infty$.
    \item If $\ds{\lim_{x \to 0^+} x^{-\delta}f(x) = K}$ for some $\delta >0$ and $K \in [0,\infty)$, then $\mathbb{E}\left[|\epsilon_1|^{-(1+\tau)}\right]<\infty$ for all $0 < \tau < \delta$.
\end{enumerate}

Referring to these conditions, it is straightforward to see that most errors, including normal errors, satisfy the condition needed for $\mathbb{E}\left[|\epsilon_1|^{-s}\right] < \infty$ for all $0 < s < 1$ in Assumption \ref{(A3)}, but fail to possess an inverse first moment, which is why \eqref{eq3.4} and \eqref{eq3.5} are presented separately. Still, there are families of distribution that satisfy the condition in (KC3), for instance, one can show that the log-normal distribution (reflected about $x=0$ and scaled) is a class of errors that satisfy the stricter conditions. 

Returning to Assumption \ref{(A4)}, the existence of the inverse moments can be shown using (KC1), by showing that for each $k$, there is some sufficiently large $N$ so that
\[ \limsup_{u\to 0}\sup_{n>N} \sup_{i = 1,...,n} f^{(k)}_{n,i}(u) < \infty, \]
where $f^{(k)}_{n,i}(u) $ denotes the density of $\left|e^{\frac{1}{2}Z_i^T\beta_0}\epsilon_i - X_i^T (\tilde\alpha_n^{(k)} - \alpha_0)\right|$ if we assume that they exist. 

\begin{rem}
If we assume the correctly specified parametric model for the noise distribution of $\ep_1$, then the rate of convergence \eqref{eq3.4} would be enough to proceed with constructing the well-known Newton-Raphson type one-step estimator to improve asymptotic efficiency (see Zacks (1971) \cite[Section 5.5]{Zac71}). We do not go into further details here.
\end{rem}

\subsection{Notation Under Sparsity}\label{sec3.3}
For clarity, we introduce additional notation for subsequent parts targeted at sparse models, which is a major advantage of the AR over other $l_2$ regularization schemes. Suppose exactly $0 \leq p_0 \leq p$ and $0 \leq q_0 \leq q+1$ components from $\alpha_0$ and $\bast_0$ are non-zero. We will denote using the ($\star$) and ($\circ$) subscripts for the non-zero and zero components respectively. Upon permuting the components, $\alpha_0 = (\alpha_{0\star},\alpha_{0\circ}) = (\alpha_{0\star},0,..,0)$ and $\basttrue = (\bast_{0\star},\bast_{0\circ}) = (\bast_{0\star},0,..,0)$. Thus we will write $\alpha^{(k)}_n = (\alpha^{(k)}_{n\star},\alpha^{(k)}_{n\circ})$ and $\beta^{\ast(k)}_n = (\beta^{\ast(k)}_{n\star},\beta^{\ast(k)}_{n\circ})$ in the same manner, despite $p_0$ and $q_0$ being unknown. Moreover, the symmetric $p$ by $p$ matrices such as 
\begin{center}
    $\ds{\tilde\Sigma_{n+}^{(0)} :=\frac{X^TD^2_n(\tilde\beta^{(0)}_n)X}{n}}$ \quad and \quad $\ds{\tilde\Sigma_{n-}^{(0)} :=\frac{X^TD^{-2}_n(\tilde\beta^{(0)}_n)X}{n}}$
\end{center} 
will be written in the form of
\begin{equation}\label{eq3.6}
    \Sigma = 
\begin{pmatrix}
(\Sigma)_{\star\star} & (\Sigma)_{\star\circ} \\
(\Sigma)_{\star\circ}^T & (\Sigma)_{\circ\circ}
\end{pmatrix},
\end{equation}
where the top left block will have dimension $p_0 \times p_0$, corresponding to the nonzero entries of $\alpha$; this is a generic notation for block decomposition. Similar subscripts will be adopted for $(q+1)$ by $(q+1)$ matrices such as $Z^{\ast T}\Zast/n$, with the top left block being size $q_0 \times q_0$. Diagonal matrices such as $\Lambda_n$ and $\gamastn$ will also be written as 
\begin{equation}\label{eq3.7}
    \Lambda_n =
\begin{pmatrix}
\Lambda_{n\star} & 0 \\
0 & \Lambda_{n\circ}
\end{pmatrix}
 \text{ and } \gamastn =
\begin{pmatrix}
\Gamma^\ast_{n\star} & 0 \\
0 & \Gamma^\ast_{n\circ}
\end{pmatrix}
\end{equation}
with the top left block having dimensions $p_0$ by $p_0$ and $q_0$ by $q_0$ respectively. We further introduce the functions
\begin{align}
    & T_\star(\alpha) = \diag(\alpha^{-2}_1,...,\alpha^{-2}_{p_0}), \quad 
    T_\circ(\alpha) = \diag(\alpha^{-2}_{p_0+1},...,\alpha^{-2}_p), \nn\\
    & S_\star(\bast) =\diag(\beta^{\ast-2}_1,...,\beta^{\ast-2}_{q_0}), \quad 
    S_\circ(\bast) = \diag(\beta^{\ast-2}_{q_0+1},...,\beta^{\ast-2}_{q+1})\nn
\end{align}
corresponding to the zero and non-zero entries of $\alpha_0$ and $\bast_0$ respectively. 

\subsection{Asymptotic Results of $\tilde\alpha_n^{(k)}$ and $\tilde\beta_n^{\ast (k)}$}

The next theorem provides insight into the asymptotic behaviour of estimators after a fixed number of iterations $k$. In particular, we will see how there is reduced asymptotic covariance compared to the initial estimators of Theorem \ref{thm1}. In the remainder of the paper we will denote $\tilde u_n^{(k)} \coloneqq \sqrt{n}\left(\tilde\alpha_n^{(k)} - \alpha_0\right)$ and $\tilde v_n^{(k)} \coloneqq \sqrt{n}\left(\tilde\beta_n^{\ast(k)} - \basttrue \right)$.
\begin{thm}\label{thm2}
Assume $\Lambda_n = \begin{pmatrix}
\Lambda_{n\star} & 0 \\
0 & \Lambda_{n\circ}
\end{pmatrix} \to \begin{pmatrix}
\Lambda_{0\star} & 0 \\
0 & \Lambda_{0\circ}
\end{pmatrix} = \Lambda_0$ for some constant matrix $\Lambda_0$. Under Assumptions \ref{(A1)} to \ref{(A3)}, for each integer $k \geq 1$,
\begin{equation}\label{eq3.8}
\tilde u_n^{(k)} = \left(\Sigma_-^X  + \begin{pmatrix}
    0 & 0 \\
    0 & \Lambda_{0\circ}T_\circ(\tilde u_{n}^{(k-1)})
    \end{pmatrix}\right)^{-1}\Pi^X_n + \smallO_p(1),
\end{equation}
where $\Pi^X_n \cil N\left(0,\Sigma_-^X \right)$.
    
If Assumption \ref{(A4)} holds and $\gamastn/\sqrt{n} \to 0$, then for all $k \geq 1$ and $c \in [0,1/2)$, 
\begin{equation}\label{eq3.9}
    n^c\left(\tilde\beta^{\ast(k)}_n - \basttrue\right) = \mathcal{O}_p(1).
\end{equation}
Additionally, if $\mathbb{E}\left[|\epsilon_1|^{-(1+\tau)}\right] < \infty$ for some $\tau > 0$, and 
% $\gamastn = \begin{pmatrix}
% \Gamma^\ast_{n\star} & 0 \\
% 0 & \Gamma^\ast_{n\circ}
% \end{pmatrix} \to \begin{pmatrix}
% \Gamma^\ast_{0\star} & 0 \\
% 0 & \Gamma^\ast_{0\circ}
% \end{pmatrix} =  \Gamma_0^\ast$ 
\begin{equation}\nn
    \gamastn = \begin{pmatrix}
\Gamma^\ast_{n\star} & 0 \\
0 & \Gamma^\ast_{n\circ}
\end{pmatrix} \to \begin{pmatrix}
\Gamma^\ast_{0\star} & 0 \\
0 & \Gamma^\ast_{0\circ}
\end{pmatrix} =  \Gamma_0^\ast
\end{equation}
for some constant matrix $\Gamma_0^\ast$, then
\begin{equation}\label{eq3.10}
    \tilde v_n^{(k)} = \left(\Sigma^{\Zast} + \begin{pmatrix}
    0 & 0 \\
    0 & \Gamma^\ast_{0\circ}T_\circ(\tilde v_{n}^{(k-1)})
    \end{pmatrix}\right)^{-1}\Pi^Z_n + \smallO_p(1),
\end{equation}
where $\Pi^Z_n \cil N\left(0,\Sigma^{\Zast} \var\left[\log(\epsilon_1^2) \right]\right)$.
\end{thm}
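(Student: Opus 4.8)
The plan is to prove \eqref{eq3.8}--\eqref{eq3.10} together by induction on $k$: the base case $k=1$ rests on Theorem~\ref{thm1}, and the inductive step on the three displays at level $k-1$. As part of the induction hypothesis I would carry along that $\tilde u_n^{(k-1)}$ and $\tilde v_n^{(k-1)}$ are $\mathcal{O}_p(1)$ and that their $\circ$-indexed blocks stay bounded away from $0$ in probability, i.e.\ $T_\circ(\tilde u_n^{(k-1)})$ and the analogous diagonal matrix of inverse squares of the $\circ$-components of $\tilde v_n^{(k-1)}$ are $\mathcal{O}_p(1)$; this is what makes the penalty blocks in \eqref{eq3.8} and \eqref{eq3.10} meaningful, and it follows from the non-degeneracy of the Gaussian limits (Theorem~\ref{thm1} at $k=1$; \eqref{eq3.8} and \eqref{eq3.10} at level $k-1$ when $k\ge2$).

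For \eqref{eq3.8}, substituting $Y=X\alpha_0+D_n(\beta_0)\epsilon$ into \eqref{eq2.4} gives
\begin{align*}
\tilde u_n^{(k)} &= A_n^{-1}\left(\tfrac{1}{\sqrt n}X^T D_n^{-2}(\tilde\beta_n^{(k-1)})D_n(\beta_0)\epsilon-\tfrac{1}{\sqrt n}\Lambda_n T(\tilde\alpha_n^{(k-1)})\alpha_0\right),\\
A_n &:= \tfrac1n X^T D_n^{-2}(\tilde\beta_n^{(k-1)})X+\tfrac1n\Lambda_n T(\tilde\alpha_n^{(k-1)}).
\end{align*}
I would then verify term by term: (i) $\tfrac1n X^T D_n^{-2}(\tilde\beta_n^{(k-1)})X=\Sigma^X_-+\smallO_p(1)$, replacing $\tilde\beta_n^{(k-1)}$ by $\beta_0$ via a first-order bound on $e^{-Z_i^T\beta}$, consistency of $\tilde\beta_n^{(k-1)}$ (Theorem~\ref{thm1}; \eqref{eq3.9} at level $k-1$) and the boundedness in Assumptions~\ref{(A1)}--\ref{(A2)}; (ii) the $\circ\circ$-block of $\tfrac1n\Lambda_n T(\tilde\alpha_n^{(k-1)})$ equals $\Lambda_{n\circ}T_\circ(\tilde u_n^{(k-1)})$ identically (since $\alpha_{0\circ}=0$), hence $\Lambda_{0\circ}T_\circ(\tilde u_n^{(k-1)})+\smallO_p(1)$ by $\Lambda_n\to\Lambda_0$ and the induction hypothesis, while its $\star\star$-block is $\smallO_p(1)$; (iii) the drift $\tfrac1{\sqrt n}\Lambda_n T(\tilde\alpha_n^{(k-1)})\alpha_0$ is $\smallO_p(1)$, its $\circ$-part being zero; (iv) $\tfrac1{\sqrt n}X^T D_n^{-2}(\tilde\beta_n^{(k-1)})D_n(\beta_0)\epsilon=\Pi^X_n+\smallO_p(1)$ with $\Pi^X_n:=\tfrac1{\sqrt n}X^T D_n^{-1}(\beta_0)\epsilon$, where the remainder is controlled by a Taylor expansion of $D_n^{-2}(\tilde\beta_n^{(k-1)})-D_n^{-2}(\beta_0)$, the rate $\|\tilde\beta_n^{(k-1)}-\beta_0\|=\smallO_p(n^{-c})$ from \eqref{eq3.4}/\eqref{eq3.9}, and the fact that the arising $\epsilon$-averages are centred (hence $\mathcal{O}_p(n^{-1/2})$), and where $\Pi^X_n\cil N(0,\Sigma^X_-)$ by the Lindeberg CLT for the triangular array (bounded rows, i.i.d.\ $\epsilon_i$ of unit variance, covariance $\tfrac1n X^TD_n^{-2}(\beta_0)X\to\Sigma^X_-$). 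Writing $B_n:=\Sigma^X_-+\mathrm{blockdiag}(0,\Lambda_{0\circ}T_\circ(\tilde u_n^{(k-1)}))$, both $A_n$ and $B_n$ dominate $\lambda_{\min}(\Sigma^X_-)I$, so their inverses are uniformly bounded and $A_n^{-1}-B_n^{-1}=A_n^{-1}(B_n-A_n)B_n^{-1}=\smallO_p(1)$; hence $\tilde u_n^{(k)}=B_n^{-1}\Pi^X_n+\smallO_p(1)$, which is \eqref{eq3.8}, and $\tilde u_n^{(k)}=\mathcal{O}_p(1)$, closing the $\alpha$-part of the induction.

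For \eqref{eq3.9} and \eqref{eq3.10}, write $L_n(\alpha_0)=\Zast\basttrue+\xi$ with $\xi:=\log(\epsilon^2)-c_0\mathbf{1}_n$ i.i.d., mean zero, and $\var[\log(\epsilon_1^2)]<\infty$ under Assumption~\ref{(A3)}, so that $L_n(\tilde\alpha_n^{(k)})=\Zast\basttrue+\xi+R^{(k)}_n$ with $R^{(k)}_{n,i}:=L_{n,i}(\tilde\alpha_n^{(k)})-L_{n,i}(\alpha_0)=2\log\bigl|1-\delta^{(k)}_{n,i}\bigr|$ and $\delta^{(k)}_{n,i}:=X_i^T(\tilde\alpha_n^{(k)}-\alpha_0)\big/\bigl(e^{\frac12 Z_i^T\beta_0}\epsilon_i\bigr)$. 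Plugging into \eqref{eq2.5}: the penalty $\tfrac1n\gamastn S(\tilde\beta_n^{\ast(k-1)})$ is positive semidefinite with $\star\star$-block $\smallO_p(1)$ (as $\gamma^\ast_{n\star}=\smallO(\sqrt n)$, $S_\star=\mathcal{O}_p(1)$), so the Gram-plus-penalty matrix stays uniformly positive definite, and under $\gamastn\to\Gamma_0^\ast$ its $\circ\circ$-block equals $\Gamma^\ast_{0\circ}T_\circ(\tilde v_n^{(k-1)})+\smallO_p(1)$ by the induction hypothesis. Hence $n^c(\tilde\beta_n^{\ast(k)}-\basttrue)$, resp.\ $\tilde v_n^{(k)}$, equals a uniformly bounded inverse matrix — in the $\sqrt n$-scaled case equal to $\bigl(\Sigma^{\Zast}+\mathrm{blockdiag}(0,\Gamma^\ast_{0\circ}T_\circ(\tilde v_n^{(k-1)}))\bigr)^{-1}+\smallO_p(1)$ — applied to $n^{c-1}Z^{\ast T}\xi+n^{c-1}Z^{\ast T}R^{(k)}_n-n^{c-1}\gamastn S(\tilde\beta_n^{\ast(k-1)})\basttrue$, resp.\ its $\sqrt n$-scaled analogue. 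The two outer terms are routine ($Z^{\ast T}\xi=\mathcal{O}_p(\sqrt n)$, with $\tfrac1{\sqrt n}Z^{\ast T}\xi\cil N(0,\Sigma^{\Zast}\var[\log(\epsilon_1^2)])$ supplying $\Pi^Z_n$; the penalty-drift term sees only the $\star$-block, so is $\mathcal{O}_p(\gamma^\ast_{n\star})$). The core is the bound on $Z^{\ast T}R^{(k)}_n$: I would use $\|Z^{\ast T}R^{(k)}_n\|\lesssim\sum_i|R^{(k)}_{n,i}|$ and the elementary inequality $\bigl|\log|r'|-\log|r|\bigr|\le 2|r-r'|/|r|+C_a|r-r'|^a|r'|^{-a}$ (any $a\in(0,1)$) with $r_i=e^{\frac12 Z_i^T\beta_0}\epsilon_i$, $r_i'=Y_i-X_i^T\tilde\alpha_n^{(k)}$, $|r_i-r_i'|\le C\|\tilde u_n^{(k)}\|/\sqrt n$, together with $\sum_i|\epsilon_i|^{-1}=\smallO(n^{1+\eta})$ a.s.\ (Marcinkiewicz--Zygmund from $\mathbb{E}|\epsilon_1|^{-s}<\infty$, $s<1$) and $\sum_i|r_i'|^{-a}=\mathcal{O}_p(n)$ (Assumption~\ref{(A4)}, which is exactly $\sup_n\sup_i\mathbb{E}|r_i'|^{-a}<\infty$). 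This gives $Z^{\ast T}R^{(k)}_n=\smallO_p(n^{1-c})$, hence \eqref{eq3.9}.

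For \eqref{eq3.10} this estimate must be sharpened to $\tfrac1{\sqrt n}Z^{\ast T}R^{(k)}_n=\smallO_p(1)$, and \textbf{this is the main obstacle}: the crude bound above only delivers $\mathcal{O}_p(n^{1-a/2})$, which exceeds $\sqrt n$. I would instead Taylor-expand $R^{(k)}_{n,i}=-2\delta^{(k)}_{n,i}+(\text{quadratic and large-deviation corrections})$; substitute the expansion \eqref{eq3.8} for $\tilde\alpha_n^{(k)}-\alpha_0=\tilde u_n^{(k)}/\sqrt n$ into the leading term $-\tfrac2n\bigl(\sum_i Z_i^{\ast}X_i^T e^{-\frac12 Z_i^T\beta_0}\epsilon_i^{-1}\bigr)\tilde u_n^{(k)}$ and exploit the cancellation from $\mathbb{E}[\epsilon_1]=0$ (each $\epsilon_i$ enters $\tilde u_n^{(k)}$ linearly, so after isolating the diagonal contribution the remainder is a centred bilinear form that, with a suitable truncation, is $\smallO_p(\sqrt n)$ under $\mathbb{E}|\epsilon_1|^{-(1+\tau)}<\infty$, which in particular gives $\mathbb{E}|\epsilon_1|^{-1}<\infty$); and control the corrections, which live on the event that some $\epsilon_i$ is of order $n^{-1/2}$ — an event whose number of indices is $\smallO_p(\sqrt n)$ since $\mathbb{E}|\epsilon_1|^{-(1+\tau)}<\infty$ forces $P(|\epsilon_1|<x)=\smallO(x^{1+\tau})$ — using that tail bound together with the corresponding strengthening of the residual negative-moment estimate underlying Assumption~\ref{(A4)}. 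Once $\tfrac1{\sqrt n}Z^{\ast T}R^{(k)}_n=\smallO_p(1)$ is established, Slutsky's lemma and the boundedness of the inverse matrices yield \eqref{eq3.9} and \eqref{eq3.10}, together with $\tilde v_n^{(k)}=\mathcal{O}_p(1)$ and the required non-degeneracy, which closes the induction.
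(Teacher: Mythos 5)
Your derivation of \eqref{eq3.8} is correct and in fact cleaner than the paper's. Both of you substitute $Y=X\alpha_0+D_n(\beta_0)\epsilon$ and separate a stochastic term from a bias term, but you observe directly that the bias term equals $-A_n^{-1}\Lambda_nT(\tilde\alpha_n^{(k-1)})\alpha_0/\sqrt n$ and that the $\circ$-block of $T(\tilde\alpha_n^{(k-1)})\alpha_0$ is identically zero (while the $\star$-block is $\mathcal{O}_p(1)$), so the bias is $\mathcal{O}_p(n^{-1/2})$ with no further work. The paper instead keeps the form $A_n^{-1}\tilde\Sigma^{(0)}_{n-}\alpha_0-\alpha_0$ and establishes the same fact through a full Schur-complement block-inverse computation (equations \eqref{eq7.12}--\eqref{eq7.18}). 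Your route sidesteps that machinery entirely and is a real simplification. Your treatment of the stochastic term and the approximation $A_n^{-1}\approx B_n^{-1}$ matches the paper's.

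There is, however, a genuine gap in your handling of $Z^{\ast T}R_n^{(k)}$. Your inequality $|\log|r'|-\log|r||\le 2|r-r'|/|r|+C_a|r-r'|^a|r'|^{-a}$ ties the exponent $a$ to the one in Assumption~\ref{(A4)}, and that $a$ is \emph{fixed} by the assumption and may be small. Your bound then gives $\sum_i|R^{(k)}_{n,i}|=\smallO_p(n^{1/2+\eta})+\mathcal{O}_p(n^{1-a/2})$, which yields \eqref{eq3.9} only for $c\le a/2$, not for all $c\in[0,1/2)$. The paper avoids this coupling by splitting the sum into the events $\{|\tilde r^{(k)}_{ni}/\sqrt n|\le 1/2\}$ and $\{>1/2\}$ ($M_{1,n}$ and $M_{2,n}$): on the large-deviation event the smallness comes from the \emph{probability} of the event (which is $\mathcal{O}(n^{-t/(2b)})$, with $t/b=s$ free to approach $1$), while A4's negative moment is only used to show $\sup_{n,i}\mathbb{E}[|\tilde\psi^{(k)}_{ni}|^a]<\infty$ via $|\log u|^a\lesssim u^{a'}+u^{-a'}$, which works for any Hölder exponent $a$ once \emph{some} $a'\in(0,1)$ works in A4. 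Your inequality pays the $|r'|^{-a}$ price on \emph{every} summand instead of only on the small-probability event, which is what loses the rate. You also silently drop the paper's Lemma~\ref{lem2} (the uniform moment bound $\sup_n\mathbb{E}[\|\tilde u_n^{(k)}\|^{s}]<\infty$, proved via a Sobolev-embedding and Burkholder argument), which the paper needs for the Hölder step in this estimate; if you wish to replace it with the in-probability bound $\tilde u_n^{(k)}=\mathcal{O}_p(1)$ plus Marcinkiewicz--Zygmund on $\sum_i|\epsilon_i|^{-1}$, you must explain why that is adequate, because the Hölder step in the paper's $M_{1,n}$ genuinely requires moments of $\|\tilde u_n^{(k)}\|$, not just tightness.

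For \eqref{eq3.10} you propose an approach (Taylor-expand $R^{(k)}_{n,i}$, isolate the linear term, exploit $\mathbb{E}[\epsilon_1]=0$ through a bilinear form, truncate) that is genuinely different from the paper's; the paper simply runs the same $M_{1,n}/M_{2,n}$ split pushed to the endpoint $s=1$ (respectively $1<b<t\le 1+\tau$) under $\mathbb{E}[|\epsilon_1|^{-(1+\tau)}]<\infty$. Your route is an interesting alternative, but as written it is a sketch with several unexecuted steps — the variance bound on the off-diagonal bilinear form, the control of the truncation event, and an unspecified ``strengthening of A4'' — so it does not constitute a proof. Finally, both you and the paper rely on $T_\circ(\tilde u_n^{(k-1)})=\mathcal{O}_p(1)$ (equivalently, a uniform small-ball bound on the $\circ$-components) in order to replace $\Lambda_{n\circ}$ by $\Lambda_{0\circ}$; for $k\ge 2$ this is not an immediate consequence of \eqref{eq3.8}, which is a representation, not a non-degenerate weak limit. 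You correctly flag this as an induction hypothesis to carry, but neither argument actually closes it.
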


\begin{rem}\nn%\label{rem3.8}
If $\alpha_0$ does not contain any zero components, then \eqref{eq3.8} simplifies to 
\begin{equation}\label{eq3.11}
\sqrt{n}\left(\tilde\alpha^{(k)}_n-\alpha_0 \right) \cil N \left(0,(\Sigma^X_-)^{-1}\right).
\end{equation}
Similarly \eqref{eq3.10} simplifies to \eqref{eq3.5} when $\beta_0^\ast$ contains no zero components. 
\end{rem}
\begin{rem}
    In \eqref{eq3.8} and \eqref{eq3.10}, we have also chosen the tuning matrices $\Lambda_n$ and $\Gamma^\ast_n$ to converge at a slower rate than those of $\Psi_n$ and $\Omega_n$ in \Cref{thm1}. This is mainly attributed to the influence of the additional tuning terms $T(\alpha)$ and $S(\beta^\ast)$. We may refer to Dai et al.(2018) \cite{BAR2018} for related sparse asymptotic results like the oracle property in the case $k \to \infty$.
\end{rem}

In Theorem \ref{thm2}, we have not identified the weak limits of $\tilde u_n^{(k)}$ and $\tilde v_n^{(k)}$. Yet, the matrices 
\begin{equation}\nn
 \begin{pmatrix}
    0 & 0 \\
    0 & \Lambda_{0\circ}T_\circ(\tilde u_{n,\circ}^{(k-1)})
    \end{pmatrix}
    \qquad\text{and}\qquad
    \begin{pmatrix}
    0 & 0 \\
    0 & \Gamma^\ast_{0\circ}T_\circ(\tilde v_{n,\circ}^{(k-1)})
    \end{pmatrix}   
\end{equation}
% $\begin{pmatrix}
%     0 & 0 \\
%     0 & \Lambda_{0\circ}T_\circ(\tilde u_{n,\circ}^{(k-1)})
%     \end{pmatrix}$ and $\begin{pmatrix}
%     0 & 0 \\
%     0 & \Gamma^\ast_{0\circ}T_\circ(\tilde v_{n,\circ}^{(k-1)})
%     \end{pmatrix}$ 
    are non-negative definite, which implies
        \[\left\Vert \left(\Sigma^X_- + \begin{pmatrix}
        0 & 0 \\
        0 & \Lambda_{0\circ}T_\circ(\tilde u_{n,\circ}^{(k-1)})
        \end{pmatrix}\right)^{-1} \right\Vert \leq \left \Vert \left(\Sigma^X_- \right)^{-1} \right \Vert\]  and  \[\left\Vert \left(\Sigma^{\Zast} + \begin{pmatrix}
        0 & 0 \\
        0 & \Gamma^\ast_{0\circ}T_\circ(\tilde v_{n,\circ}^{(k-1)})
        \end{pmatrix}\right)^{-1} \right\Vert \leq \left \Vert \left(\Sigma^{\Zast} \right)^{-1} \right \Vert.\]
Therefore, we can infer that the AR estimators are $\sqrt{n}$-tight and gain reduced covariance by penalization. However, Theorem \ref{thm2} alone is insufficient in describing all the benefits of AR as it does not justify the motivation to perform extra iterations. Since the changes in distributions from extra iterations depend solely on $\begin{pmatrix}
    0 & 0 \\
    0 & \Lambda_{0\circ}T_\circ(v_{n,\circ}^{(k-1)})
    \end{pmatrix}$ and $\begin{pmatrix}
    0 & 0 \\
    0 & \Gamma^\ast_{0\circ}T_\circ(u_{n,\circ}^{(k-1)})
    \end{pmatrix}$, understanding the behaviour of sparse estimates is crucial, which motivated us to give the next result.

\begin{thm}\label{thm3}
Under Assumption \ref{(A1)}, if $\min_{1 \leq j \leq p} \lambda_{nj} \to \infty$ and $\Lambda_n/\sqrt{n} \to 0$, then for any $k \geq 0$, 
\begin{equation}\label{eq3.12}
\dfrac{\Vert \tilde\alpha^{(k+1)}_{n\circ} \Vert}{\Vert \tilde\alpha^{(k)}_{n\circ}  \Vert} \cip 0 \quad \text{as } n \rightarrow \infty.
\end{equation}
If $\min_{1 \leq j \leq q+1} \gamma_{nj}/n^c \to \infty$ for some $c >0$ and $\gamastn/\sqrt{n} \to 0$, then for any $k \geq 0$,
\begin{equation}\label{eq3.13}
    \dfrac{\Vert \tilde\beta^{\ast(k+1)}_{n\circ} \Vert}{\Vert \tilde\beta^{\ast(k)}_{n\circ}  \Vert} \cip 0 \quad \text{as } n \rightarrow \infty.
\end{equation}
If $\mathbb{E}\left[|\epsilon_1|^{-(1+\tau)}\right] < \infty$ for some $\tau > 0$, then we can replace $\min_{1 \leq j \leq q+1} \gamma_{nj}/n^c \to \infty$ above with $\min_{1 \leq j \leq q+1} \gamma_{nj} \to \infty$.
\end{thm}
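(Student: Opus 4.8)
The plan is to handle \eqref{eq3.12} and \eqref{eq3.13} by one common device. For the $\alpha$ iterates, write $A_n:=n^{-1}X^{T}D_n^{-2}(\tilde\beta_n^{(k)})X$, $b_n:=n^{-1}X^{T}D_n^{-2}(\tilde\beta_n^{(k)})Y$ and $M_n:=n^{-1}\Lambda_nT(\tilde\alpha_n^{(k)})\succeq0$, so that $\tilde\alpha_n^{(k+1)}=(A_n+M_n)^{-1}b_n$ with $M_n$ diagonal. The $j$-th normal equation $((A_n+M_n)\tilde\alpha_n^{(k+1)})_j=(b_n)_j$ then gives, for every $j$, the exact identity
\[
\tilde\alpha_{nj}^{(k+1)}=\frac{n(\tilde\alpha_{nj}^{(k)})^{2}}{\lambda_{nj}}\bigl(b_n-A_n\tilde\alpha_n^{(k+1)}\bigr)_{j},
\]
and the same identity with $\gamma_{nj}$, $S(\tilde\beta_n^{\ast(k)})$, $b_n^{Z}:=n^{-1}Z^{\ast T}L_n(\tilde\alpha_n^{(k+1)})$ and $A_n^{Z}:=n^{-1}Z^{\ast T}\Zast$ replacing $\lambda_{nj},T,b_n,A_n$ holds for the $\beta^\ast$ updates. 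The proof then reduces to obtaining the right rates for the residual $(b_n-A_n\tilde\alpha_n^{(k+1)})_j$ and for the norm $\|\tilde\alpha_{n\circ}^{(k)}\|$ (and their $\beta^\ast$ counterparts).

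First I would record that the iterates retain the rates of Theorems~\ref{thm1} and \ref{thm2} even though the penalties now diverge. From $\tilde\alpha_n^{(k+1)}-\alpha_0=(A_n+M_n)^{-1}\bigl((b_n-A_n\alpha_0)-M_n\alpha_0\bigr)$ one has $\|(A_n+M_n)^{-1}\|\le\|A_n^{-1}\|\le K_X$ by Assumption~\ref{(A1)} (as $M_n\succeq0$), $b_n-A_n\alpha_0=n^{-1}X^{T}D_n^{-2}(\tilde\beta_n^{(k)})D_n(\beta_0)\epsilon=\mathcal{O}_p(n^{-1/2})$ as in the proof of Theorem~\ref{thm1}, and $\sqrt n\,M_n\alpha_0\cip 0$ because $\Lambda_n/\sqrt n\to0$ while the zero coordinates of $\alpha_0$ annihilate the only entries of $M_n\alpha_0$ that could blow up (the nonzero ones stay bounded away from $0$ by consistency); hence $\sqrt n(\tilde\alpha_n^{(k)}-\alpha_0)=\mathcal{O}_p(1)$. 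The same computation with $S$, $\gamastn$, $\Zast$ and the decomposition $L_n(\tilde\alpha_n^{(k+1)})-\Zast\basttrue=[L_n(\tilde\alpha_n^{(k+1)})-L_n(\alpha_0)]+[\log(\epsilon^2)-c_0\mathbf{1}_n]$ coming from \eqref{eq2.2} gives $n^{c}(\tilde\beta_n^{\ast(k)}-\basttrue)=\mathcal{O}_p(1)$ for every $c\in[0,1/2)$ under Assumption~\ref{(A4)} with $\gamastn/\sqrt n\to0$, and the full rate $\sqrt n(\tilde\beta_n^{\ast(k)}-\basttrue)=\mathcal{O}_p(1)$ when $\mathbb{E}[|\epsilon_1|^{-(1+\tau)}]<\infty$.

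Then I would derive \eqref{eq3.12}: for a zero index $p_0<j\le p$, since $\alpha_{0\circ}=0$ we get $(b_n-A_n\tilde\alpha_n^{(k+1)})_j=(b_n-A_n\alpha_0)_j-(A_n(\tilde\alpha_n^{(k+1)}-\alpha_0))_j=\mathcal{O}_p(n^{-1/2})$ by the above and $\|A_n\|\le K_X$; plugging this into the identity and bounding $(\tilde\alpha_{nj}^{(k)})^{2}\le\|\tilde\alpha_{n\circ}^{(k)}\|^{2}$ gives
\[
\frac{|\tilde\alpha_{nj}^{(k+1)}|}{\|\tilde\alpha_{n\circ}^{(k)}\|}\le\frac{\bigl(\sqrt n\,\|\tilde\alpha_{n\circ}^{(k)}\|\bigr)\bigl(\sqrt n\,\bigl|(b_n-A_n\tilde\alpha_n^{(k+1)})_j\bigr|\bigr)}{\min_{1\le i\le p}\lambda_{ni}}=\frac{\mathcal{O}_p(1)}{\min_{1\le i\le p}\lambda_{ni}}\cip 0,
\]
and summing over the finitely many zero indices yields \eqref{eq3.12}. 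For $\beta^\ast$, the identical computation produces, for a zero index $q_0<j\le q+1$, a bound of the form $\bigl(n^{1-c_1-c_2}/\min_{1\le i\le q+1}\gamma_{ni}\bigr)\,\mathcal{O}_p(1)$, where $\|\tilde\beta_{n\circ}^{\ast(k)}\|=\mathcal{O}_p(n^{-c_1})$ and $(b_n^{Z}-A_n^{Z}\tilde\beta_n^{\ast(k+1)})_j=\mathcal{O}_p(n^{-c_2})$ for all $c_1,c_2\in[0,1/2)$; here Assumption~\ref{(A4)} is exactly what forces $n^{-1}Z^{\ast T}[L_n(\tilde\alpha_n^{(k+1)})-L_n(\alpha_0)]=\mathcal{O}_p(n^{-c})$ for every $c<1/2$, as in the proof of \eqref{eq3.9}. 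If $\min_{i}\gamma_{ni}/n^{c}\to\infty$ for some $c>0$, choosing $c_1,c_2$ close enough to $1/2$ that $1-c_1-c_2<c$ gives \eqref{eq3.13}; and if $\mathbb{E}[|\epsilon_1|^{-(1+\tau)}]<\infty$ we may take $c_1=c_2=1/2$, so the factor $n^{1-c_1-c_2}$ equals $1$ and $\min_{i}\gamma_{ni}\to\infty$ suffices, just as for $\alpha$.

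The algebra, together with the monotone-operator bound $\|(A_n+M_n)^{-1}\|\le\|A_n^{-1}\|$, is routine. The one genuinely analytic ingredient --- and the only place where Assumption~\ref{(A4)} is indispensable --- is the rate $n^{-1}Z^{\ast T}[L_n(\tilde\alpha_n^{(k+1)})-L_n(\alpha_0)]=\mathcal{O}_p(n^{-c})$, i.e.\ control of $n^{-1}\sum_{i}\Zast_i\log\bigl|1-X_i^{T}(\tilde\alpha_n^{(k+1)}-\alpha_0)/(e^{\frac12 Z_i^{T}\beta_0}\epsilon_i)\bigr|$ when the denominators can be arbitrarily close to $0$. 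Since this estimate is already obtained en route to Theorems~\ref{thm1} and \ref{thm2}, the only real care left within the proof of Theorem~\ref{thm3} is bookkeeping the fact that under the weaker moment hypothesis one has merely the slower $n^{c}$-rate, which is precisely why the divergence of the $\beta^\ast$-tuning must beat $n^{1-c_1-c_2}$ --- hence the assumption $\min_{i}\gamma_{ni}/n^{c}\to\infty$.
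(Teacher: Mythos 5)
Your proof is correct, and it rests on the same underlying mechanism as the paper's --- the normal equations $(A_n+M_n)\tilde\alpha_n^{(k+1)}=b_n$ force the diverging penalty entry $\lambda_{nj}n^{-1}(\tilde\alpha_{nj}^{(k)})^{-2}\tilde\alpha_{nj}^{(k+1)}$ to balance an $\mathcal{O}_p(n^{-1/2})$ residual on the zero coordinates --- but the execution is genuinely different and more elementary. The paper keeps the vector equation intact, multiplies through by $(\tilde\Sigma_{n-}^{(0)})^{-1}(\tilde\Sigma_{n-}^{(0)}+n^{-1}\Lambda_nT(\tilde\alpha_n^{(0)}))$, isolates the lower block, and then needs a spectral decomposition of $(S_n)_{\circ\circ}$ together with a reverse triangle inequality to lower-bound the penalty term in terms of the ratio vector $d_{n\circ}^{(1)}$; your coordinate-wise identity $\tilde\alpha_{nj}^{(k+1)}=\lambda_{nj}^{-1}n(\tilde\alpha_{nj}^{(k)})^{2}(b_n-A_n\tilde\alpha_n^{(k+1)})_j$ extracts the same conclusion directly, with only the trivial bound $(\tilde\alpha_{nj}^{(k)})^2\le\|\tilde\alpha_{n\circ}^{(k)}\|^2$, and dispenses with both the eigen-decomposition and the denominator $\min_j\lambda_{nj}-nK_XM_n^2$ that the paper must argue is eventually positive. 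Two further points in your favour: you explicitly re-establish the $\sqrt n$-tightness of $\tilde\alpha_n^{(k)}$ (and the $n^c$-tightness of $\tilde\beta_n^{\ast(k)}$) in the diverging-penalty regime via $\sqrt n\,M_n\alpha_0\cip0$, whereas the paper simply asserts $M_n=\mathcal{O}_p(n^{-1/2})$ for $k\ge1$ even though Theorem~\ref{thm2} was proved only for $\Lambda_n\to\Lambda_0$ constant; and your bookkeeping of the exponents $c_1,c_2<1/2$ makes transparent exactly why the weaker moment hypothesis costs the extra factor $n^{1-c_1-c_2}$ and hence the condition $\min_j\gamma_{nj}/n^c\to\infty$, matching the paper's $n^\delta$ requirement. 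The one thing the paper's route buys is that the quantity $\|d_{n\circ}^{(k+1)}\|=\mathcal{O}_p(1/\min_j\lambda_{nj})$ gives an explicit rate for the shrinkage ratio, which your argument also yields but does not state.
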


We note that the convergence rates of $\Lambda_n$ and $\Gamma_n^\ast$ in Theorems \ref{thm2} and \ref{thm3} are different, as Theorem \ref{thm2} assumes the convergence of hyperparameters to a constant, while Theorem \ref{thm3} assumes the divergence of them. However, since $\Lambda_n$ and $\gamastn$ can diverge arbitrarily slowly, both results can often be observed simultaneously in practice. 

\section{Simulation}\label{sec4}
\subsection{Parameter Estimation and the effect of iterations}
This section aims to illustrate the theoretical results presented in Section \ref{sec3} and provide numerical evidence to generalize them further. We first illustrate the shrinkage and distributional properties of the AR described by the theorems. Assume the model \eqref{eq1.1} and let $p = q = 20$. We generate data $X_i$ and $Z_i$ ($i = 1,...,n$) as $n$ independent copies of $X_0 \sim N(0, \Sigma)$ and $Z_0 \sim N(0,\Sigma)$, where $X_0$ and $Z_0$ are independent and $\Sigma$ is the covariance matrix with 1 in the diagonals and 0.4 in the off-diagonals, simulating correlated features. We consider three types of noise distributions - standard normal - $N(0,1)$, Laplace with location and scale parameters 0 and 1 - Laplace$(0,1)$, and $t$ with degrees of freedom 3 - $t(3)$. For the standard normal, the expectation $\mathbb{E}[\log(\epsilon^2)]$ can be calculated to be (Harvey (1976) \cite{Harvey1976}) 
\begin{align*}
    \mathbb{E}\left[\log(\epsilon_1^2)\right] = \text{Digamma}\left(\frac{1}{2}\right) - \log\left(\frac{1}{2}\right) \approx -1.2704.
\end{align*}
For the Laplace and $t$ distributions, we numerically calculate them to be approximately -1.154 and -0.9014 respectively. The true signals are set to be $\alpha_0 = \beta_0 = (0.1,0.2,...,1,0,...,0)$ and $\beta_0^\ast = (\beta_0,\mathbb{E}[\log(\epsilon^2)])$. We compare results when $n = 100, 1000$ for the AR with $k = 0, 2, 5, 10$ and the broken adaptive ridge (BAR) over 1000 independent trials, showing how the AR evolves with iterations. The BAR estimator is obtained as a numeric fixed point of the iteration process when the norm of the difference of consecutive estimates falls under a certain threshold.

We will set all tuning matrices to be a multiple of the identity matrix so that $\Psi_n = \psi I_p$ and define $\lambda$, $\gamma$, and $\omega$ similarly. We ran extensive grid searches for the hyperparameters using cross-validation (CV) and observed that the CV error is stable for a wide range of $\psi$ and $\omega$. The error is also close to minimal for $\lambda,\gamma \in (10^{-3},1)$. To simplify computations, instead of searching for the optimal hyperparameters at each iteration, we will set $\psi = \omega = \sqrt{n}$, which is the rate discussed in Theorem \ref{thm1}, and set $\lambda = \gamma = 0.1\log(n)$, matching the rate in Theorem \ref{thm3}.

Tables \ref{tab1} and \ref{tab2} show the effect iterations have in estimating both sparse and non-sparse parameters for $n = 100, 1000$ and three types of error distribution. For estimates of $\alpha$, the mean squared error (MSE) drastically drops from $k = 0$ to $k = 2$ regardless of the error distribution. This can be attributed to factoring in heterogeneous error terms. For non-sparse components of both $\alpha$ and $\beta$, the error can drop further in some scenarios but in other cases, such as when $n = 100$ and $\epsilon \sim t(3)$, the minimal empirical error is observed when $k = 5$ for $\alpha$. It is also unsurprising to see the improvement in accuracy by increasing $n = 100$ to $n = 1000$, as we numerically verify results from Theorems \ref{thm1} and \ref{thm2}. 

The effect of iterations becomes more apparent when inspecting errors for sparse estimates, where the error generally decreases as we iterate more, with a couple of exceptions. This is due to sparse estimates being increasingly shrunk towards 0, a phenomenon best explained by Theorem \ref{thm3}. To further visualize this, we can refer to tables \ref{tab3} and \ref{tab4}, where we observe the decrease of the medians of the absolute values of all sparse estimates for both $\alpha$ and $\beta$, for all types of error distributions. Crucially, while the AR estimates are never sparse, by implementing an arbitrary sparsity threshold, we can control the degree of sparsity by controlling the number of iterates $k$, allowing for a higher degree of flexibility for the user. As for the BAR, as we regarded values under $10^{-100}$ to be 0, almost all values in both tables are recorded to be 0.

We can also refer to Figure \ref{fig1} to better visualize the behaviour of sparse estimates by iterating. Namely, observe that as the number of iterations increases, sparse estimates of $\alpha_{0,11} = 0$ become more concentrated near 0, showing the same phenomena as Table \ref{tab3}.
\begin{rem}
Since we are considering large finite sample behavior, it is possible that a few estimates for zero components do not converge and instead are much further away from zero than the rest. In such cases, if we use the mean to evaluate the extent of shrinkage, such outliers would prevent any meaningful comparison. Instead, using the median easily circumvents this issue.
\end{rem}

From a computational standpoint, it may be more intuitive to compute the BAR instead of tuning for an additional hyperparameter in the number of iterations $k$. Yet, if we carefully inspect the MSEs in tables $1$ and $2$, we see that minimal error is often achieved for $k = 10$ instead of the BAR, especially for the estimates of $\beta^\ast$. We suspect part of the reason is due to the occasional divergence of BAR estimates in finite samples, while it is also possible that the BAR estimates are too aggressive in pursuing a sparse solution. In that sense, depending on the criterion set for convergence, keeping the number of iterations $k$ finite can greatly reduce the computational burden without sacrificing performance. 

\begin{table}[h]
    \centering
        \caption{The mean squared error (MSE) of the AR estimates of $\alpha$ for four different numbers of iterations ($k = 0,2,5,10$) and the broken adaptive ridge (BAR), averaged over 1000 independent trials. We consider two sample sizes ($n = 100, 1000$) and $N(0,1)$ (N), Laplace$(0,1)$ (L) and $t(3)$ (T) noise distributions. The non-sparse components (NS) of $\alpha$ consist of the first 10 components, where $\alpha_{0\star} = (0.1,0.2,...,1)$, and the average MSEs are shown in the top 6 rows. The bottom 6 rows show results for the remaining 10 sparse components (S).} 
    \begin{tabular}{|c|c|c|c|c|c|c|}
        \hline
         &$n,\mathcal{L}(\epsilon)$ & $k = 0$ & $k = 2$& $k = 5$ & $k = 10$ & BAR \\
         \hline
        \multirow{6}{*}{NS}
        &$100$, N& 7.38 & 0.0846 & 0.0188 & 0.0172 & 0.0182\\
        &$1000$, N &0.968 & 5.6 $\times 10^{-4}$ & 1.59 $\times 10^{-4}$ & 1.59 $\times 10^{-4}$ & 1.59 $\times 10^{-4}$\\
        &$100$, L&14.8 & 0.152 & 0.0329 & 0.0337 & 0.0354\\
        &$1000$, L& 1.88 & 8.07$\times 10^{-4}$ & 1.6$\times 10^{-4}$ & 1.42$\times 10^{-4}$ & 1.41$\times 10^{-4}$\\
        &$100$, T& 19.7 & 0.239 & 0.0444 & 0.0392 & 0.0415\\
        &$1000$, T& 2.74 & 0.00385 & 1.51$\times 10^{-4}$ & 1.1$\times 10^{-4}$ & 1.1$\times 10^{-4}$\\
        \hline
        \multirow{6}{*}{S}
        &$100$, N & 6.86 & 0.0601 & 0.00886 & 0.0079 & 0.00815\\
        &$1000$, N&0.844 & 7.49 $\times 10^{-5}$ & 2.51 $\times 10^{-5}$ & 2.55 $\times 10^{-5}$ & 2.52 $\times 10^{-5}$\\
        &$100$, L&13.6 & 0.113 & 0.0205 & 0.0199 & 0.0205\\
        &$1000$, L&1.66 & 1.28$\times 10^{-4}$ & 2.58 $\times 10^{-5}$ & 2.29 $\times 10^{-5}$ & 2.27 $\times 10^{-5}$\\
        &$100$, T&17.8 & 0.212 & 0.0281 & 0.0264 & 0.0281\\
        &$1000$, T& 2.5 & 0.0026 & 4.49$\times 10^{-5}$ & 3.68$\times 10^{-5}$ & 3.65$\times 10^{-5}$\\
        \hline
    \end{tabular}
    \label{tab1}
\end{table}

\begin{table}[h]
    \centering
        \caption{The MSE of BAR and AR ($k = 0,2,5,10$) estimates for $\beta^\ast$ are recorded for $n = 100, 1000$ and three types of error distributions (N,L,T) over 1000 independent trials. The MSEs on the top 6 rows are for the 11 non-sparse components (NS) of $\beta^\ast$, which are $\beta^\ast_{0\star} = (0.1,0.2,...,1,\mathbb{E}[\log(\epsilon^2)])$, where the value of final component depends on the error distribution. Results for the remaining 10 sparse components (S) are shown in the bottom 6 rows. }
    \begin{tabular}{|c|c|c|c|c|c|c|}
        \hline
         &$n,\mathcal{L}(\epsilon)$ & $k = 0$ & $k = 2$& $k = 5$ & $k = 10$ & BAR\\
         \hline
        \multirow{6}{*}{NS}
        &$100$, N&1.58 & 0.25 & 0.282 & 0.435 & 0.476\\
        &$1000$, N &1.01 & 0.013 & 0.01 & 0.00994 & 0.00989\\
        &$100$, L& 1.83 & 0.311 & 0.307 & 0.466 & 0.514\\
        &$1000$, L& 1.26 & 0.0199 & 0.0132 & 0.0129 & 0.0128\\
        &$100$, T&1.74 & 0.295 & 0.305 & 0.475 & 0.535\\
        &$1000$, T&1.26 & 0.022 & 0.012 & 0.0115 & 0.0115\\
        \hline
        \multirow{6}{*}{S}
        &$100$, N &0.0766 & 0.0881 & 0.128 & 0.181 & 0.191\\
        &$1000$, N& 0.00885 & 0.00435 & 0.00522 & 0.0054 & 0.00539\\
        &$100$, L& 0.075 & 0.0995 & 0.168 & 0.239 & 0.256\\
        &$1000$, L& 0.00918 & 0.00597 & 0.0076 & 0.00789 & 0.00787\\
        &$100$, T&0.0775 & 0.1 & 0.156 & 0.225 & 0.243\\
        &$1000$, T&0.00916 & 0.0052 & 0.00635 & 0.00663 & 0.00661\\
        \hline
    \end{tabular}
    \label{tab2}
\end{table}

\begin{figure}
\begin{subfigure}{0.9\textwidth}
    \centering
    \includegraphics[width= 0.31\linewidth]{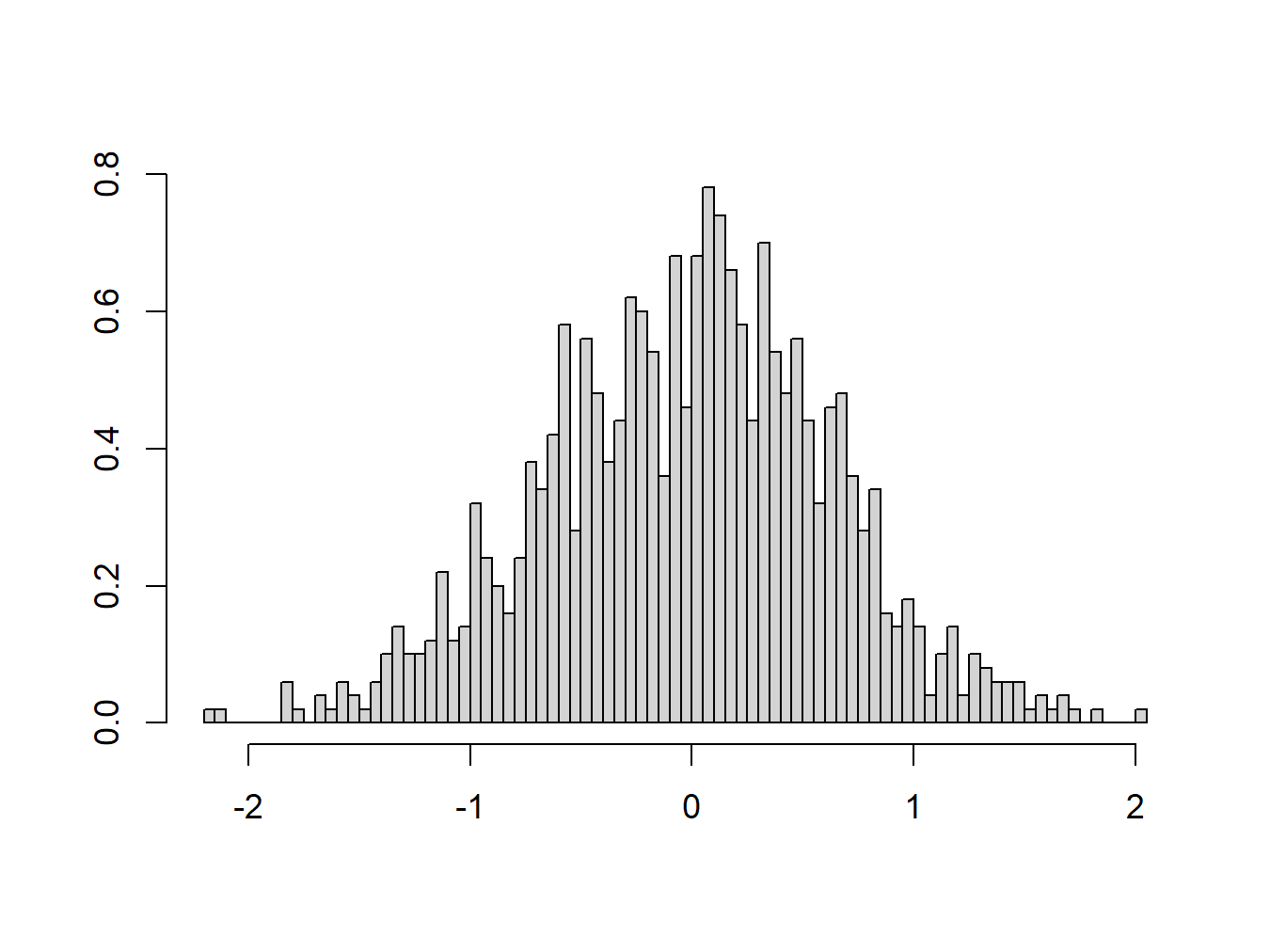}
    \includegraphics[width= 0.31\linewidth]{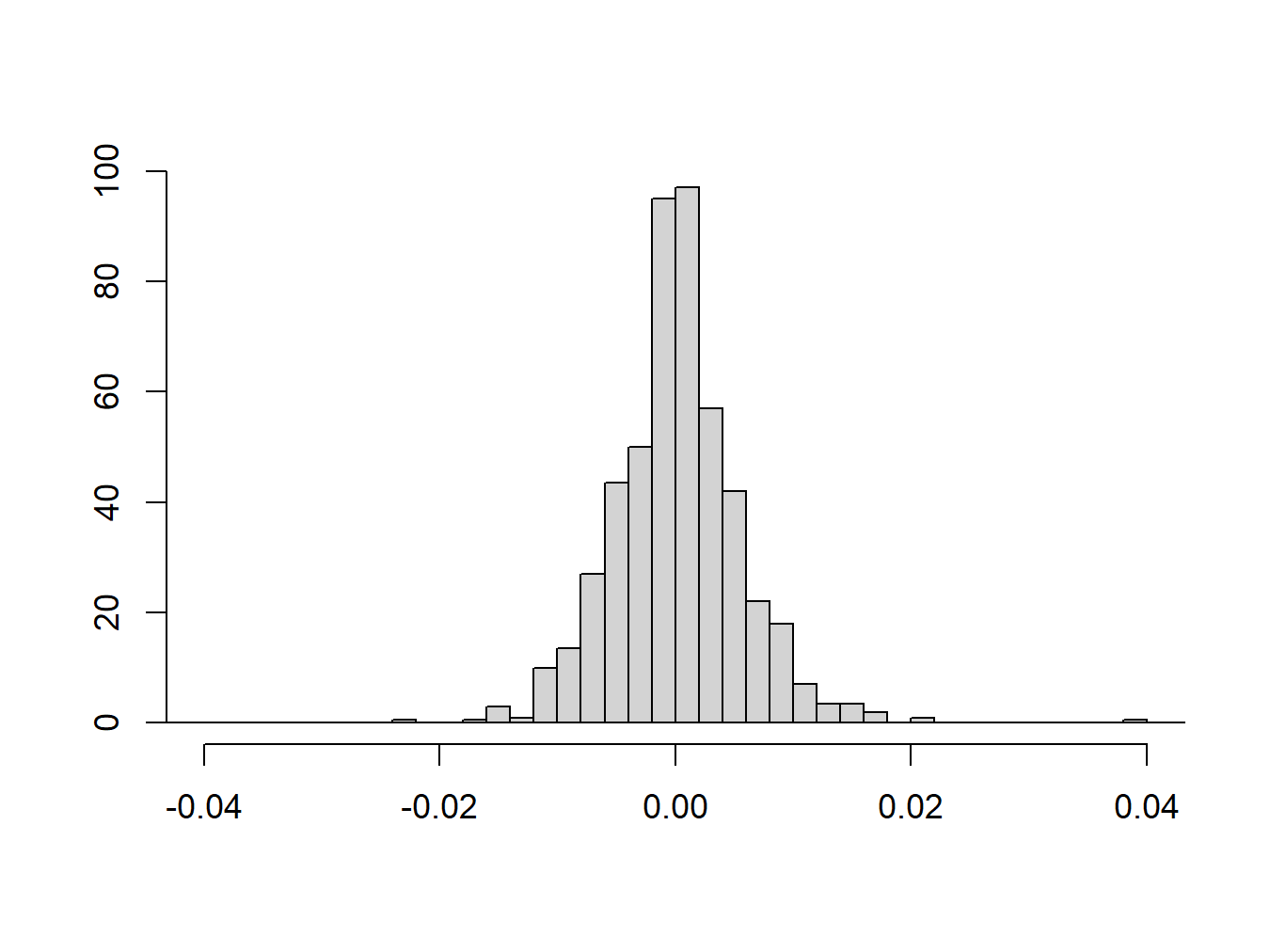}
\end{subfigure}

\begin{subfigure}{0.9\textwidth}
    \centering
    \includegraphics[width= 0.31\linewidth]{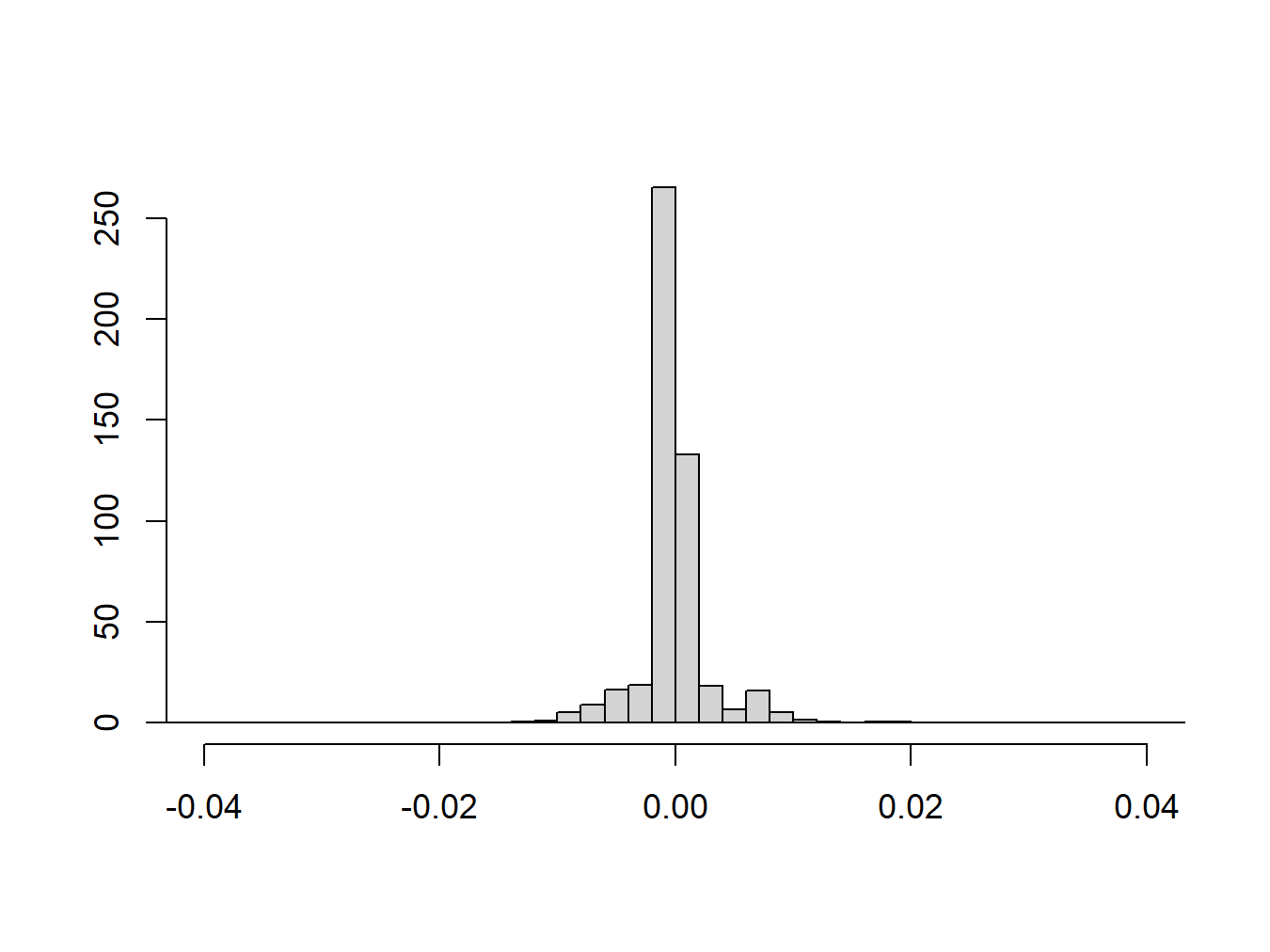}
    \includegraphics[width= 0.31\linewidth]{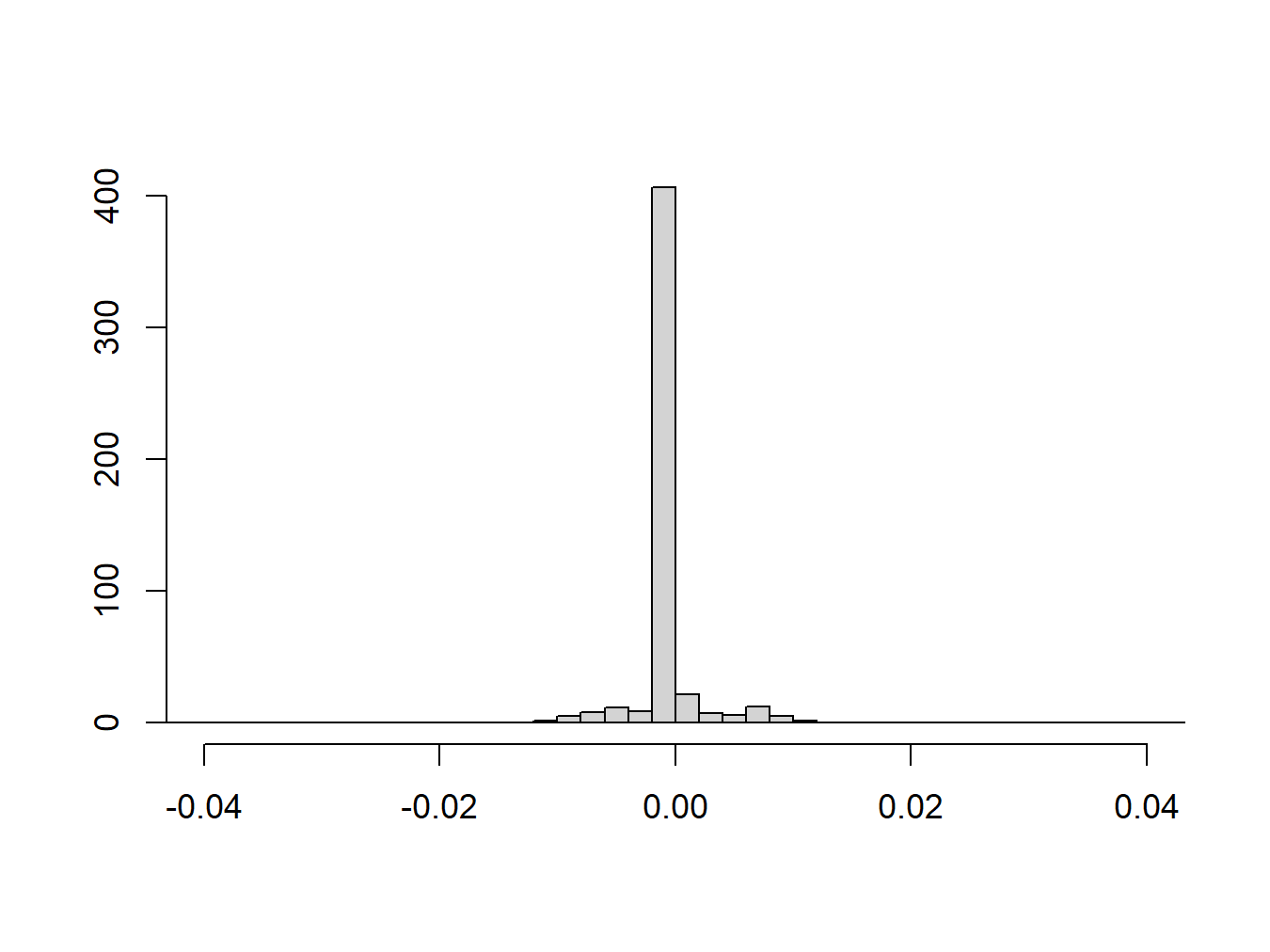}
    \includegraphics[width= 0.31\linewidth]{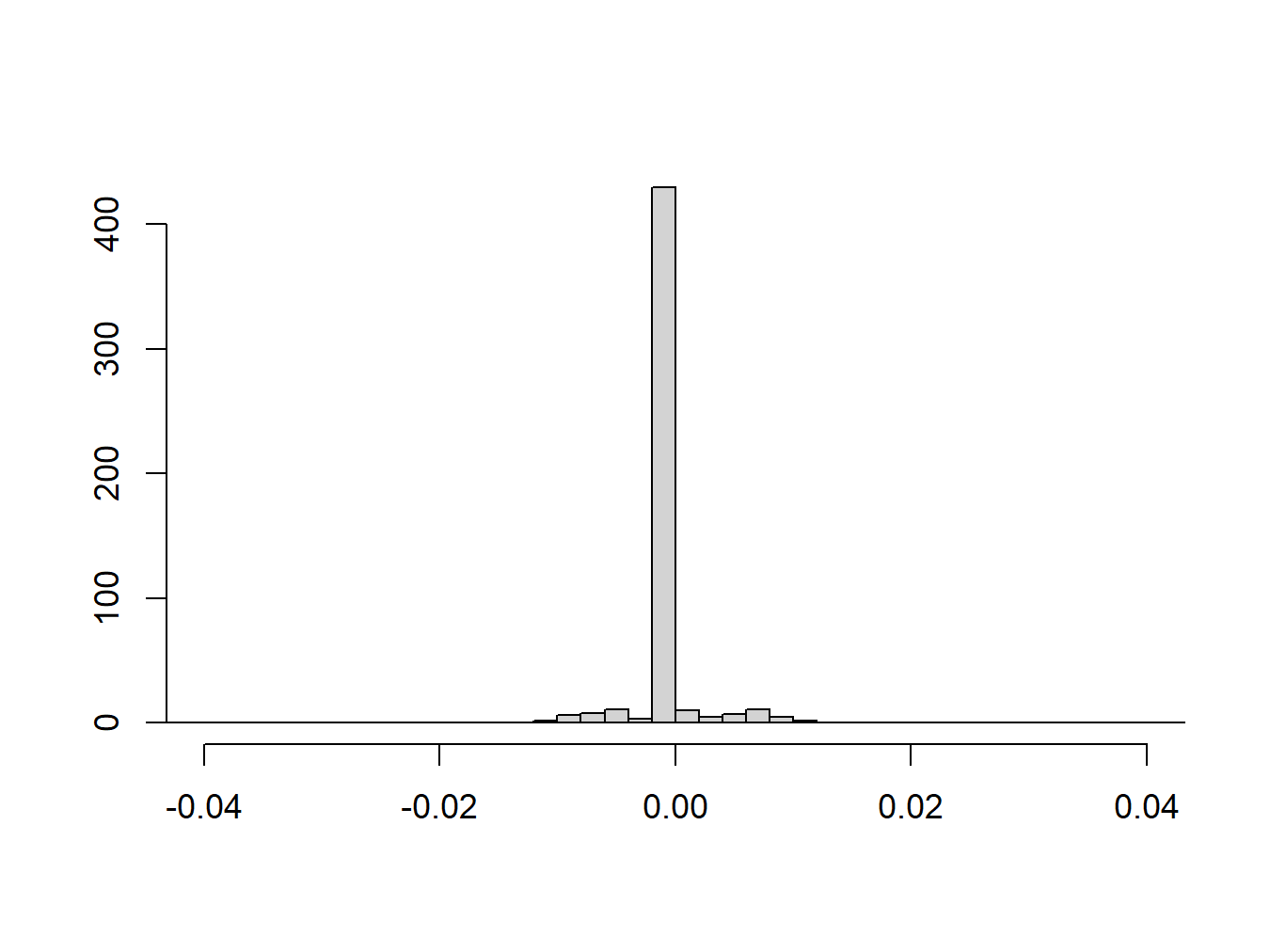}
\end{subfigure}
    \caption{The histograms of 1000 estimates for $\alpha_{0,11} = 0$, showing that as the number of iterates increases, the estimates for sparse parameters become more centred around 0. Histograms are generated with $n = 1000$ under normal errors. The histograms of the AR ($k = 0$, $k = 2$) are on the top left and top right respectively, while the bottom row, from left to right shows the AR ($k = 5$, $k = 10$) and the BAR. The corresponding histograms for $\beta$ are the same so are omitted here.}
    \label{fig1}
\end{figure}

\begin{table}[h]
    \centering
        \caption{This table shows the median of the absolute valued AR/BAR estimates for the 10 sparse signals of $\alpha$ under three noise distributions (N,L,T). This illustrates the shrinkage effect of Theorem \ref{thm3} as we increase the number of iterations, reflected by the decreasing values. The proximity to 0 for AR estimates also depends on the number of samples $n$. Values below $10^{-100}$ are treated as numeric zeros.}
    \begin{tabular}{|c|c|c|c|c|c|c|}
        \hline
         $n,\mathcal{L}(\epsilon)$ & $k = 0$ & $k = 2$& $k = 5$ & $k = 10$ & BAR\\
         \hline
        $100$, N & 1.28 & 0.0969 & 0.0107 & 3.27$\times 10^{-19}$& 0\\
        $1000$, N& 0.592 & 0.00261 & 4.58 $\times 10^{-6}$ & 7.23 $\times 10^{-99}$ & 0\\
        $100$, L&1.62 & 0.146 & 0.0347 & 2.25 $\times 10^{-5}$ & 0\\
        $1000$, L& 0.802 & 0.00435 & 1.34$\times 10^{-4}$ & 5.13 $\times 10^{-48}$ & 0\\
        $100$, T&1.72 & 0.154 & 0.0419 & 0.00132 & 0\\
        $1000$, T&0.885 & 0.00556 & 0.000561 & 5.73$\times 10^{-26}$ & 0\\
        \hline
    \end{tabular}
    \label{tab3}
\end{table}

\begin{table}[h]
    \centering
        \caption{This table records the median of the absolute valued AR/BAR estimates for the 10 sparse signals of $\beta^\ast$ under three noise distributions (N,L,T), as it shows the same trend as table \ref{tab3}. Values below $10^{-100}$ are treated as numeric zeros.}
    \begin{tabular}{|c|c|c|c|c|c|c|}
        \hline
         $n,\mathcal{L}(\epsilon)$ & $k = 0$ & $k = 2$& $k = 5$ & $k = 10$& BAR\\
         \hline
        100, N&0.19 & 0.112 & 0.0346 & 1.42$\times 10^{-13}$& 0\\
        1000, N& 0.0621 & 0.0214 & 4.57$\times 10^{-4}$ & 2.79 $\times 10^{-60}$ & 0\\
        100, L&0.188 & 0.119 & 0.0519 & 4.25$\times 10^{-9}$ & 0\\
        1000, L& 0.0647 & 0.0271 & 0.00378 & 4.7 $\times 10^{-29}$ & 0\\
       100, T&0.192 & 0.121 & 0.0465 & 5.64$10\times^{-10}$ & 0\\
       1000, T&00.0634 & 0.0248 & 0.00158 & 4.75$10\times^{-40}$ & 0\\
        \hline
    \end{tabular}
    \label{tab4}
\end{table}

\subsection{Small Parameters and variable selection}
The decreasing MSEs of the experiment highlight the advantages of using more iterations. Yet, in finite sample settings, one can make the argument to control $k$, and hence the extent of sparsity, if the emphasis is put more on controlling error for model selection. While many sparse estimators typically provide desirable asymptotic results, their performance in finite samples can sometimes suffer from inconsistency issues when small coefficients are present outside the asymptotic setting. See P\"otscher and Leeb (2009) \cite{PotscherLeeb} for detailed analyses where they revealed weaknesses of the penalized estimator in a moving parameter setting by allowing the real parameter to perturb around $0$ at an order of $n^{-1/2}$.

In tables \ref{tab5} and \ref{tab6}, we recorded the percentage of the instances where the estimator wrongly identifies $\alpha_{01} = 0.1$ and $\beta^\ast_{01} = 0.1$ as zeros, by setting a sparsity threshold of $10^{-4}$. While this threshold is arbitrary it is sufficient to depict the behaviours of the estimators. As discussed previously, we can view the number of iterations $k$ as a tuning parameter that controls the degree of sparsity, and by increasing $k$ we also risk wrongly pruning small parameters, with the percentage reaching over 60 percent for the BAR when the sample size is small. By increasing $n$ to 1000, these errors are mostly eliminated for $\alpha$ but some remain for $\beta^\ast$. 

\begin{table}[h]
    \centering
        \caption{The table shows the percentage of estimates of $\alpha_{01} = 0.1$ that are incorrectly identified as 0. The errors increase with $k$ and are particularly high for $n = 100$ for all three noise distributions (N,L,T).}
    \begin{tabular}{|c|c|c|c|c|c|c|}
        \hline
         $n,\mathcal{L}(\epsilon)$ & $k = 0$ & $k = 2$& $k = 5$ & $k = 10$& BAR\\
         \hline
        100, N&0 & 1.2 & 22.9 & 50.8 & 60.2\\
        1000, N& 0 & 0 & 0 & 0 & 0\\
        100, L&0.1 & 1.2 & 17.2 & 43.2 & 54.2\\
        1000, L& 0 & 0.3 & 0.3 & 0.3 & 0.3\\
        100, T&0 & 0.5 & 15 & 42.9 & 54.1\\
       1000, T&0 & 0.1 & 0.2 & 0.3 & 0.3\\
        \hline
    \end{tabular}
    \label{tab5}
\end{table}

\begin{table}[h]
    \centering
        \caption{The table shows the percentage of estimates of $\beta^\ast_{01} = 0.1$ that are incorrectly identified as 0. Similar to table \ref{tab5}, the errors increase with $k$ and decrease with $n$, but appear to be much more persistent. The results are shown for three noise distributions (N,L,T)}
    \begin{tabular}{|c|c|c|c|c|c|c|}
        \hline
         $n,\mathcal{L}(\epsilon)$ & $k = 0$ & $k = 2$& $k = 5$ & $k = 10$& BAR\\
         \hline
        100, N& 0 & 6.4 & 36.3 & 58.2 & 66.7\\
        1000, N& 0.2 & 4.5 & 23.1 & 35.3 & 37.6\\
        100, L&0.1 & 4.9 & 31.8 & 54.8 & 63.7\\
        1000, L& 0.2 & 4.4 & 26.4 & 38.2 & 39.9\\
       100, T&0 & 5.6 & 33.3 & 55.7 & 65\\
       1000, T&0.1 & 4 & 25.1 & 37.1 & 39.2\\
        \hline
    \end{tabular}
    \label{tab6}
\end{table}

Table \ref{tab7} tabulates the false negative and positive rates for the AR and BAR estimators, providing a fairer judgment of performance. Here, a false negative is wrongly estimating a non-zero estimate to be zero, and a false positive is the failure to identify a zero parameter. As we did previously, we set the sparsity threshold equal to $10^{-4}$. As the adaptive ridge with $k = 0$ is simply the regular ridge, it cannot identify sparse signals, as shown by the near $0$ and $100$ false negative and positive rates. Then, the false negative rates increase with $k$ and the false positive rates fall with the number of iterations. These rates imply that no estimator is completely "better" than another, and the optimal choice of $k$ will depend on the user's preference for balancing errors. 
\begin{table}[h]
    \centering
        \caption{The false positive (FP) and negative (FN) error rates for the AR and BAR under 6 different scenarios ($n = 100, 1000$, noise = N,L,T). Iterating more leads to a higher degree of sparsity, and can be reflected in the higher false negative rates but lower false positive rates.}
    \begin{tabular}{|c|c|c|c|c|c|c}
        \hline
         $n,\mathcal{L}(\epsilon)$& Iterations& FN ($\alpha,\beta^\ast$) & FP ($\alpha,\beta^\ast$)\\
         \hline
        \multirow{5}{*}{$100, N(0,1)$}
        &$k$ = 0 & 0 , 0.01 & 99.99 , 99.98\\
        &$k$ = 2 &0.38 , 3.31 & 98.82 , 95.58 \\
        &$k$ = 5 & 4.48 , 17.6 & 72.16 , 68.86\\
        &$k$ = 10 & 8.61 , 31.19 & 39.48 , 44.18\\
        &BAR & 10.23 , 39.82 & 28.8 , 33.19\\
        \hline
        \multirow{4}{*}{$1000, N(0,1)$}
        &$k$ = 0 & 0.01 , 0.02 & 100 , 99.93\\
        &$k$ = 2 & 0.07 , 1.01 & 95 , 91.72 \\
        &$k$ = 5 & 0.07 , 3.91 & 38.83 , 52.87\\
        &$k$ = 10 & 0.07 , 5.59 & 17.64 , 38.28\\
        &BAR &0.07 , 5.85 & 13.51 , 36.75\\
        \hline
        \multirow{4}{*}{$100, \text{Laplace}(0,1)$}
        &$k$ = 0 &0.01 , 0.02 & 100 , 99.97\\
        &$k$ = 2 &0.4 , 3.52 & 99.21 , 95.27\\
        &$k$ = 5 & 3.96 , 18.85 & 79.35 , 70.24\\
        &$k$ = 10 &8.89 , 33.34 & 48.99 , 46.54\\
        &BAR & 11.49 , 42.3 & 37.37 , 34.7\\
        \hline
        \multirow{4}{*}{$1000, \text{Laplace}(0,1)$}
        &$k$ = 0 &0.02 , 0.06 & 100 , 99.95\\
        &$k$ = 2 & 0.05 , 1.01 & 96.86 , 92.62 \\
        &$k$ = 5 & 0.05 , 4.65 & 51.41 , 58.39\\
        &$k$ = 10 &0.05 , 6.43 & 29.15 , 43.5\\
        &BAR &0.05 , 6.64 & 24.51 , 41.44\\
        \hline
        \multirow{4}{*}{$100, t(3)$}
        &$k$ = 0 &0 , 0.01 & 100 , 99.99 \\
        &$k$ = 2 &0.44 , 3.13 & 99.17 , 95.37 \\
        &$k$ = 5 & 3.97 , 18.76 & 81.29 , 69.95\\
        &$k$ = 10 &9.7 , 32.65 & 51.89 , 46.15\\
        &BAR & 12.56 , 42.6 & 39.78 , 34.47\\
        \hline
        \multirow{4}{*}{$1000, t(3)$}
        &$k$ = 0 & 0.01 , 0.07 & 100 , 99.94 \\
        &$k$ = 2 & 0.03 , 1.04 & 97.37 , 92.05\\
        &$k$ = 5 & 0.04 , 4.52 & 58.11 , 55.99\\
        &$k$ = 10 & 0.05 , 6.18 & 37 , 41.09\\
        &BAR & 0.05 , 6.47 & 32.02 , 38.76\\
        \hline
    \end{tabular}
    \label{tab7}
\end{table}

\subsection{Experimental run: possible relaxation of negative moments condition}
\label{hm:ss_moment}
We conclude the simulation section by showing some numerical evidence which suggests possible relaxation of the annoying condition
\begin{equation}\label{epsilonharsh}
    \mathbb{E}\left[|\epsilon_i|^{-(1+\tau)}\right] < \infty \quad \text{for some} \quad \tau > 0,
\end{equation}
that we needed to prove the asymptotic normality of $\bast$ in Theorem \ref{thm1}.
While providing useful theoretical guarantees, this assumption is restrictive and excludes common noise distributions such as the normal. In the hopes of generalizing and incorporating a wider class of noise distributions, we numerically show that this condition on $\epsilon$ can be lifted. Figure \ref{fig2} shows strong normality evidence with the QQ-plots of $\tilde\beta_{01}^{\ast(0)}$ under the three aforementioned noise distributions, as they all fail to satisfy \eqref{epsilonharsh}. This generalization may then lead to the relaxation of the conditions in both Theorems \ref{thm2} and \ref{thm3}, providing more unified results for many commonly considered noises. 

\begin{figure}
    \centering
    \includegraphics[width=0.32\linewidth]{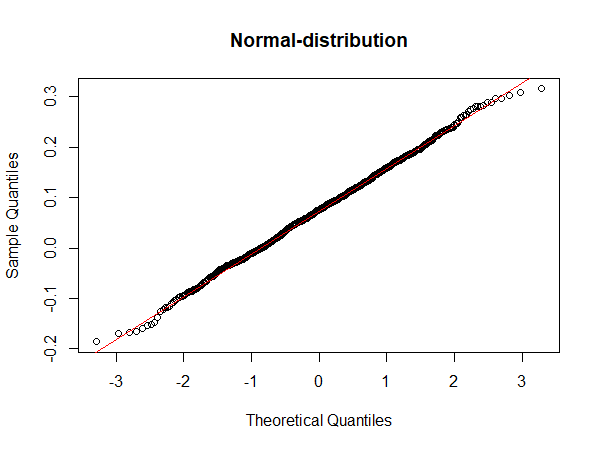}
    \includegraphics[width=0.32\linewidth]{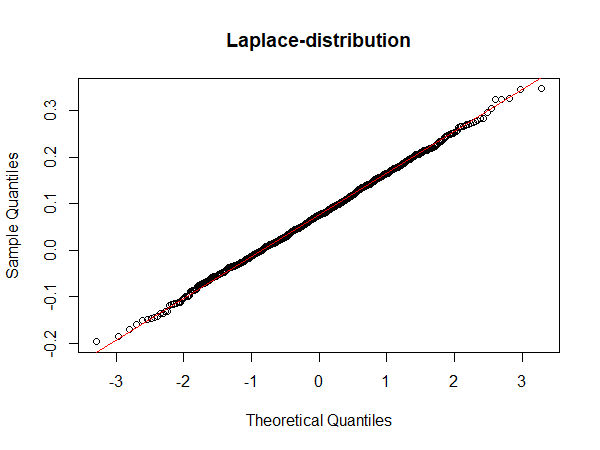}
    \includegraphics[width=0.32\linewidth]{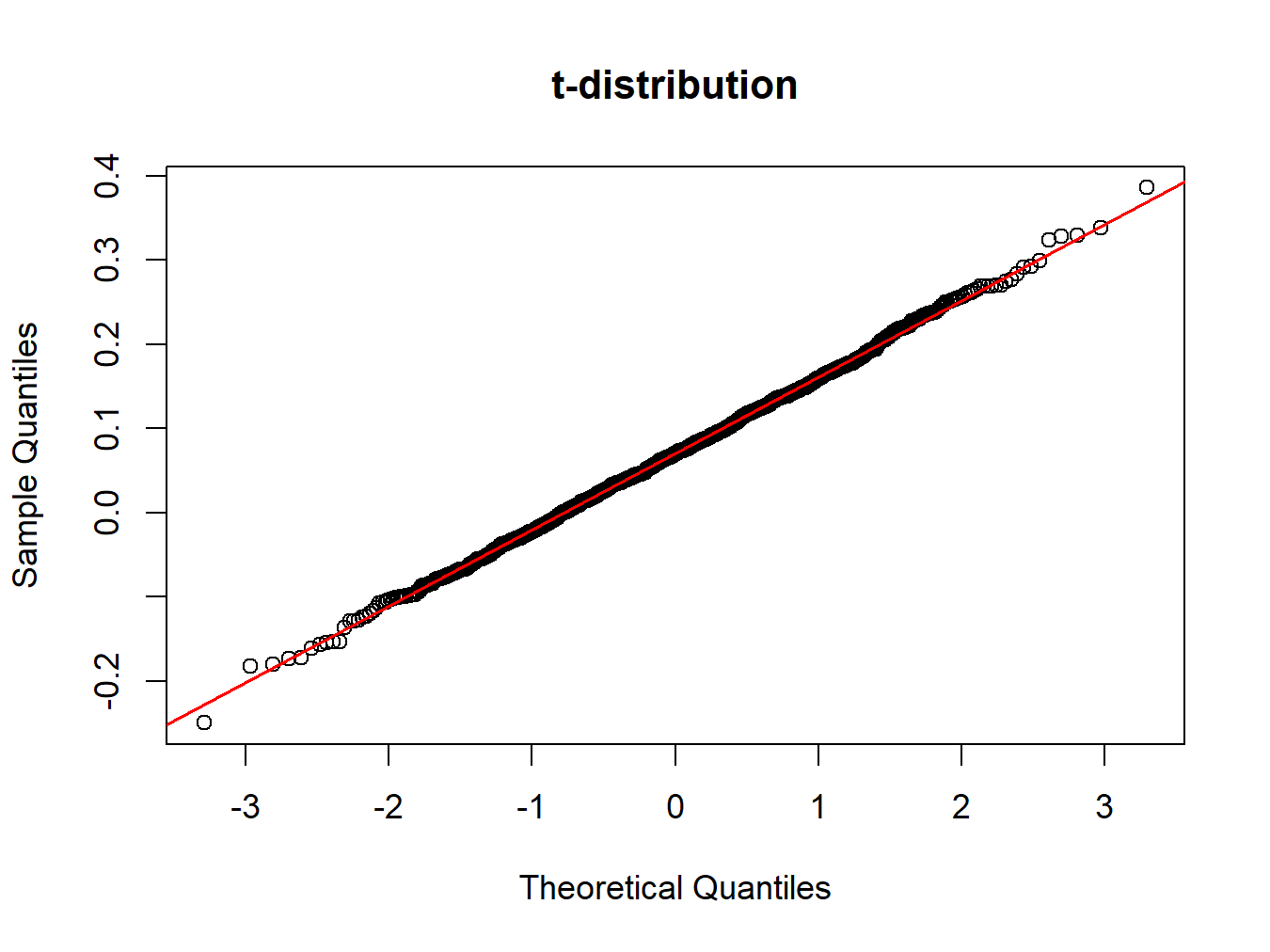}
    \caption{Figure shows numerical evidence of normality for the initial ridge estimates of $\beta^\ast$, as we show the QQ-plots of the first component of $\tilde\beta_{01}^{(0)}$ under three different noise distributions. These plots are generated for $n = 1000$ over 1000 independent trials.}
    \label{fig2}
\end{figure}

\section{Data Analysis}\label{sec5}
\subsection{Boston Housing Prices}\label{sec5.1}
We analysed the Boston housing dataset, which contains $n = 506$ samples for median house prices (Harrison and Rubinfeld, 1978) \cite{BostonHousing}. We set $X = Z$ as our design matrices, which consists of $p = q = 13$ covariates. We standardize $X$ and $Z$ so that their columns have zero mean and unit variance. We also add a column of $1$'s to $Z$ as we did in previous sections. We randomly split the data into training and testing sets of sizes 400 and 106. We assume model \eqref{eq1.1} and compare the results of the AR for $k = 0,2,5,10$ and the BAR using mean squared prediction errors (MSPE).

The tuning matrices are set as a multiple of the identity as in Section \ref{sec4}, then we first did a grid search for the optimal value of $\psi$ using 5-fold cross-validation over $\psi \in \{10^k; k = -5,-4.9,...,5\}$, where we obtained the optimal value to be $\psi = 50.1$. We then set $\omega = 1$ and then perform a further grid search for the AR tuning parameters on $(\lambda,\gamma) \in \{(10^i,10^j); i,j = -2,-1.8,...,2)\}$ using the AR ($k = 10$), where we obtained $(\lambda,\gamma) = (0.01,25.1)$ for $k = 2$, $(\lambda,\gamma) = (2.51,6.31)$ for $k = 5$, $(\lambda,\gamma) = (0.158,0.0158)$ for $k = 10$ and $(\lambda,\gamma) = (1,0.01)$ for the BAR. Here, we used the median squared prediction error instead of the MSPE to evaluate CV errors as we found it to produce better and more consistent tuning parameters and was less swayed by extreme data values across many different training sets, which resulted in better prediction accuracy for all estimators.  

The results are shown in table \ref{tab8}, where we observe the clear gap between $k = 0$ and the other estimators. Referring to the simulation results in Section \ref{sec4}, this is likely due to the lack of modelling heteroscedasticity in the case of $k = 0$. Indeed, the number of significant components in the variance component identified by the AR procedure is shown to be 8 or 9, showing the presence of relationships between variance and the covariates. Overall, despite the wide range of hyperparameters used, the resulting models and performances were nearly identical, and all showed significant improvement compared to a model without heteroscedasticity.

\begin{table}
    \centering
    \caption{The mean squared prediction error of each adaptive ridge predictor, and their number of chosen variables when fitted to the Boston Housing dataset (Harrison and Rubinfeld, 1978) \cite{BostonHousing}. Heteroscedasticity was detected for more than the covariates and its consideration subsequently led to reduced prediction error.}
    \label{tab8}
    \begin{tabular}{|c|c|c|c|c|c|}
     \hline
                &   $k = 0$   &  $k = 2$  &   $k = 5$  &   $k = 10$  & BAR\\
            \hline
Selected Predictors ($\alpha$) & 5 & 6 & 6 & 6 & 6 \\
Selected Predictors ($\beta$) & n/a & 7 & 8 & 8 & 8 \\
\hline
MSPE & 26.15 & 24.8 & 24.94 & 24.72 & 24.86\\
\hline
    \end{tabular}
\end{table}

\subsection{Electricity consumption prediction}
We also fitted electricity consumption data obtained from Tokyo Electric Power Company Holdings, Incorporated (TEPCO)\footnote{("https://www4.tepco.co.jp/en/forecast/html/download-e.html")} for 2018 and 2019. The dataset contains records of the highest hourly consumption, contributing to a total of $(2 \times 365) \times 24 = 17520$ entries. We train with the 2018 set and test with the 2019 data. 

The design matrices $X=Z$ contain 42 features, including days of the week, hours of the day, temperature, public holidays, weekend public holidays, and autoregressive terms. $X$ and $Z$ are standardized so that each column has mean 0 and unit variance, and the response $Y$ is centralized to have mean 0. We also augment $Z$ with a column of $1$'s as in the previous sections. The same method of grid search for hyperparameters from Section \ref{sec5.1} was used, but contrary to using the median, using the MSPE in the cross-validation step yielded more consistent and well-performing models. In the end, the hyperparameters were set to be $\psi =  0.5$, $\omega = 1$, $\lambda = 1000$ and $\gamma = 10000$ for all 3 values of $k$ and subsequently the BAR. 

We compared the mean squared prediction error (MSPE) for each of the AR estimators and the values are shown in table \ref{tab9} along with the number of selected predictors. Here, the AR procedure eliminated all heteroscedastic terms, and the lack of heteroscedasticity led to the lowest MSPE obtained for the regular ridge estimate. However, if we compare the AR for $k = 10$ and the BAR with the regular ridge ($k = 0$), performance was nearly identical but the AR utilized 5 fewer covariates as the number of selected $\alpha$ variables decreased from 19 to 14. We wish to use this data example to motivate future work to extend current results to time series models, enhancing its applicability. 
\begin{table}
    \centering
    \caption{The mean squared prediction error and number of chosen variables for each AR/BAR estimator. The noise was determined to be homogeneous as the performance stayed stable as we iterated.}
    \label{tab9}
    \begin{tabular}{|c|c|c|c|c|c|}
     \hline
    &   $k = 0$   &  $k = 2$  &   $k = 5$  &   $k = 10$  & BAR\\
    \hline
Selected Predictors ($\alpha$) & 19 & 19 & 14 & 14 & 14 \\
Selected Predictors ($\beta$) & n/a & 10 & 0 & 0 & 0 \\
\hline
MSPE & 11447 & 12046 & 11467 & 11454 & 11454\\
\hline
    \end{tabular}

\end{table}

\section{Discussion and Concluding Remarks}\label{sec6}
One area for future work is to extend the current theory to include general positive definite matrices as tuning matrices, as we believe similar results should arise from generic eigenvalue arguments. On this avenue, we wish also to introduce more polished methods for selecting hyperparameters. 

Motivated by the numerical evidence given in Section \ref{hm:ss_moment}, 
% at the end of Section \ref{sec4}, 
the assumptions on $\epsilon$ can likely be weakened so that asymptotic distributional results for $\bast$ hold under a wider class of errors. Further, to strengthen the result given in Theorem \ref{thm2}, one can attempt to construct approximate confidence sets for the iterated estimators of $k \geq 1$. 

Finally, with the success of ordinary ridge estimators in high-dimensional settings, it will be interesting to see the AR as an extension of high-dimensional ridge regression. We are also optimistic that the theory of the AR scheme can be generalized to other frameworks such as time series and stochastic differential equations. 
% as we obtained success in modelling electricity data.

\subsection*{Acknowledgement}
The first author (KLKH) would like to thank JGMI of Kyushu University for their support. Also, this work was partially supported by JST CREST Grant Number JPMJCR2115 and JSPS KAKENHI Grant Number 22H01139, Japan (HM).

\newpage
\appendix\normalsize
\section{Proof of Theorem \ref{thm1}}\label{secA}
We begin by stating the matrix inversion formula (also known as the first-order resolvent formula or Sherman-Morrison-Woodbury formula), used as in Dai et al.(2018) \cite{BAR2018}:
\begin{equation}\label{eq7.1}
(X^T X + \Psi_n)^{-1} = (X^T X)^{-1} - (X^T X)^{-1} \Psi_n (X^T X + \Psi_n)^{-1}.
\end{equation}
Using \eqref{eq7.1}, we write
\begin{equation}\label{eq7.2}
\begin{split}
    &\sqrt{n}(\tilde\alpha^{(0)}_n-\alpha_0)\\ =& \sqrt{n}\left[(X^T X + \Psi_n)^{-1}X^T Y-\alpha_0\right] \\
    =& \sqrt{n}\left[\left\{(X^T X)^{-1} - (X^T X)^{-1} \Psi_n (X^T X + \Psi_n)^{-1}\right\}X^T (X\alpha_0+D_n(\beta_0)\epsilon)-\alpha_0\right] \\
    =& \left(\frac{X^T X}{n}\right)^{-1}\frac{1}{\sqrt{n}}X^T D_n(\beta_0)\epsilon - \left(\frac{X^T X}{n}\right)^{-1} \frac{\Psi_n}{\sqrt{n}} \left(\frac{X^T X}{n} + \frac{\Psi_n}{n}\right)^{-1}\frac{X^T Y}{n}.
\end{split}
\end{equation}
Let us first consider term $\left(\frac{X^T X}{n}\right)^{-1}\frac{1}{\sqrt{n}}X^T D_n(\beta_0)\epsilon$, where we can apply the Lindeberg Central Limit Theorem. By writing
\begin{align*}
    \frac{1}{\sqrt{n}}X^T D_n(\beta_0)\epsilon = \frac{1}{\sqrt{n}}\sumi{e^{\frac{1}{2}Z_i^T\beta_0}X_i\epsilon_i},
\end{align*}
the mean is computed as$\frac{1}{\sqrt{n}}\sumi{\mathbb{E}\left[e^{\frac{1}{2}Z_i^T\beta_0}X_i\epsilon_i\right]} = 0$, where $0$ denotes the $p$-dimensional $0$-vector here. Also, the covariance
\begin{align*}
    \frac{1}{n}\sumi{\cov[e^{\frac{1}{2}Z_i^T\beta_0}X_i\epsilon_i}] = \frac{1}{n}\sumi{e^{Z_i^T\beta_0}X_i^TX_i\mathbb{E}\left[\epsilon_i^2\right]} 
    = \frac{X^T D^2_n(\beta_0)X}{n}
    \to \Sigma^X_+
\end{align*}
under Assumption \ref{(A1)}. Finally, to check for Lindeberg's condition, we let $m_n := \max_{1\leq i \leq n}[\frac{e^{Z_i^T\beta_0}X_iX_i^T}{n}]$ and note that $m_n \to 0$ by Assumption \ref{(A2)}. Thus for any $\delta > 0$,
\begin{align*}
    &\frac{1}{n}\sumi{\mathbb{E}\left[\Vert e^{\frac{1}{2}Z_i^T\beta_0}X_i\epsilon_i \Vert^2;\Vert e^{\frac{1}{2}Z_i^T\beta_0}X_i\epsilon_i \Vert^2 \geq n\delta\right]}\\
    &= \sumi{\frac{e^{Z_i^T\beta_0}X_iX_i^T}{n} \mathbb{E}\left[\epsilon_i^2 ; \epsilon_i^2 \geq \dfrac{\delta}{\frac{e^{Z_i^T\beta_0}X_iX_i^T}{n}}\right]}\\
    &\leq \mathbb{E}\left[\epsilon_1^2 ; \epsilon_1^2 > \frac{\delta}{m_n}\right] \sumi{\frac{e^{Z_i^T\beta_0}X_iX_i^T}{n}}\\
    &= \tr\left({\frac{X^T D_n^2(\beta_0)X}{n}}\right)\mathbb{E}\left[\epsilon_1^2 ; \epsilon_1^2 > \frac{\delta}{m_n}\right]\\
    \to& 0,
\end{align*}
where the final limit follows from $\frac{X^T D_n^{2}(\beta_0)X}{n} \to \Sigma_+^X$ and applying Lebesgue's Dominated Convergence Theorem on the sequence $\mathbb{E}\left[\epsilon_1^2 ; \epsilon_1^2 > \frac{\delta}{m_n}\right]$.
Thus the first term of \eqref{eq7.2} converges in law: 
\begin{equation}\label{eq7.3}
    \left(\frac{X^T X}{n}\right)^{-1}\frac{1}{\sqrt{n}}X^T D_n(\beta_0)\epsilon \cil N\left(0,(\Sigma^X)^{-1}\Sigma^X_+(\Sigma^X)^{-1}\right).
\end{equation}
As for the other term of the sum in \eqref{eq7.2}, we rewrite it as 
\begin{align*}
    &\left(\frac{X^T X}{n}\right)^{-1} \frac{\Psi_n}{\sqrt{n}} \left(\frac{X^T X}{n} + \frac{\Psi_n}{n}\right)^{-1}\frac{X^T Y}{n}\\ 
    =& \left(\frac{X^T X}{n}\right)^{-1} \frac{\Psi_n}{\sqrt{n}} \left(\frac{X^T X}{n} + \frac{\Psi_n}{n}\right)^{-1}\frac{1}{n}X^T\left(X\alpha_0 + D_n(\beta_0)\epsilon\right),
\end{align*}
which we further evaluate as a sum of two terms. By \eqref{eq7.3} and $\Psi_n/\sqrt{n} \to \Psi_0$ ($\Psi_n/n \to 0$), it follows that 
\[\left(\frac{X^T X}{n}\right)^{-1} \frac{\Psi_n}{\sqrt{n}} \left(\frac{X^T X}{n} + \frac{\Psi_n}{n}\right)^{-1}\frac{X^T X}{n}\alpha_0 \to (\Sigma^X)^{-1}\Psi_0\alpha_0,\]
and
\[
\left(\frac{X^T X}{n}\right)^{-1} \frac{\Psi_n}{\sqrt{n}} \left(\frac{X^T X}{n} + \frac{\Psi_n}{n}\right)^{-1}\frac{1}{n}X^T D_n(\beta_0)\epsilon \cip 0. 
\]
Combining these results with \eqref{eq7.3} show asymptotic normality as stated in \eqref{eq3.3}:
\begin{align*}
    \sqrt{n}(\tilde\alpha^{(0)}_n-\alpha_0) \cil N \left(-(\Sigma^X)^{-1}\Psi_0\alpha_0, (\Sigma^X)^{-1}\Sigma^X_+(\Sigma^X)^{-1}\right).
\end{align*}
We now proceed to the asymptotics of $\bastinit$. First note that 
\begin{align*}
    L_n(\alpha_0) &= Z\beta_0 + \log(\epsilon^2)\\
    &= \Zast \basttrue + \log(\epsilon^2) - c_0.
\end{align*}
 We denote $E_n := \log(\epsilon^2) - c_0$ and $R_n : = L_n( \tilde\alpha_n^{(0)}) - L_n(\alpha_0)$, so that for $c \in [0,\frac{1}{2})$,
\begin{equation}\label{eq7.4}
\begin{split}
    &n^c\left(\bastinit - \basttrue\right)\\ =& n^c\left((Z^{\ast T}\Zast + \Omega_n^\ast)^{-1}Z^{\ast T}L_n(\tilde\alpha_n^{(0)}) - \basttrue\right) \\
    =& n^c\left[\left(\frac{Z^{\ast T}\Zast }{n}+\frac{\Omega_n^\ast}{n}\right)^{-1}\frac{Z^{\ast T}\Zast \beta_0^\ast}{n}- \beta_0^\ast\right] + \left(\frac{Z^{\ast T}\Zast }{n}+\frac{\Omega_n^\ast}{n}\right)^{-1}\frac{Z^{\ast T}E_n}{n^{1-c}}\\ 
    &{}\qquad + \left(\frac{Z^{\ast T}\Zast }{n}+\frac{\Omega_n^\ast}{n}\right)^{-1}\frac{Z^{\ast T}R_n}{n^{1-c}}.
\end{split}
\end{equation}
By \eqref{eq7.1}, the first term of the sum vanishes as $n \to \infty$ for $c \leq \frac{1}{2}$ under $\frac{\Omega_n^\ast}{\sqrt{n}} \to 0$:
\begin{align*}
    &n^c\left[\left(\frac{Z^{\ast T}\Zast }{n}+\frac{\Omega_n^\ast}{n}\right)^{-1}\frac{Z^{\ast T}\Zast \beta_0^\ast}{n}- \beta_0^\ast\right]\\
    &= n^c\bigg[\left(\frac{Z^{\ast T}\Zast }{n}\right)^{-1}\frac{Z^{\ast T}\Zast \beta_0^\ast}{n} 
    \nn\\
    &{}\qquad 
    - \left(\frac{Z^{\ast T}\Zast }{n}\right)^{-1}\frac{\Omega_n^\ast}{n}\left(\frac{Z^{\ast T}\Zast }{n}+\frac{\Omega_n^\ast}{n}\right)^{-1}\frac{Z^{\ast T}\Zast \beta_0^\ast}{n} - \beta_0^\ast\bigg]\\
    &= -\left(\frac{Z^{\ast T}\Zast }{n}\right)^{-1}\frac{\Omega_n^\ast}{n^{1-c}}\left(\frac{Z^{\ast T}\Zast }{n}+\frac{\Omega_n^\ast}{n}\right)^{-1}\frac{Z^{\ast T}\Zast \beta_0^\ast}{n}\\
    \to & \text{ }0.
\end{align*}
Then, if we write $\frac{Z^{\ast T}E_n}{\sqrt{n}} = \sumi{\frac{Z _i^{\ast}E_{ni}}{\sqrt{n}}}$ and use the Lindeberg Central limit theorem analogously, we obtain under Assumption \ref{(A1)} and $\frac{\Omega_n^\ast}{\sqrt{n}} \to 0$ that
\begin{equation}\label{eq7.5}
    \left(\frac{Z^{\ast T}\Zast }{n}\right)^{-1}\frac{Z^{\ast T}E_n}{\sqrt{n}} \cil N\left(0, (\Sigma^{\Zast })^{-1}\var[\log(\epsilon_1^2)]\right),
\end{equation}
which also proves that the second term of \eqref{eq7.4} is $\smallO_p(1)$ if $c < 1/2$. 

We will first prove \eqref{eq3.4}, and to this end, we are going to show the tightness of the third term of $\eqref{eq7.4}$: 
\begin{align*}
    \left(\frac{Z^{\ast T}\Zast}{n}+\frac{\Omega_n^\ast}{n}\right)^{-1}\frac{Z^{\ast T}R_n}{n^{1-c}} = \mathcal{O}_p(1).
\end{align*}
By Assumption \ref{(A1)}, it is sufficient to show 
\begin{align*}
    \frac{Z^{\ast T}R_n}{n^{1-c}} = \mathcal{O}_p(1).
\end{align*}
We denote $\tilde u^{(0)}_n = \sqrt{n}(\tilde\alpha_n^{(0)} - \alpha_0)$, $\tilde  r_{ni}^{(0)} = \dfrac{X_i^T \tilde u^{(0)}_n}{e^{\frac{1}{2}Z_i^T\beta_0}\epsilon_i}$,  $\tilde \psi^{(0)}_{ni} = \log\bigg|1 - \frac{1}{\sqrt{n}}\tilde  r_{ni}^{(0)}\bigg|$, and $I(A)$ as the indicator function of the event $A$.
Then,
\begin{equation*}
    \begin{split}
    \frac{Z^{\ast T}R_n}{n} &= \frac{2}{n} \sumi{\Zast _i\tilde \psi^{(0)}_{ni}}\\
    &=  \frac{2}{n} \sumi{\Zast _i\tilde \psi^{(0)}_{ni}\left(I\bigg(\bigg|\frac{\tilde  r_{ni}^{(0)}}{\sqrt{n}}\bigg| \leq  \frac{1}{2}\bigg) + I\bigg(\bigg|\frac{\tilde  r_{ni}^{(0)}}{\sqrt{n}}\bigg| > \frac{1}{2}\bigg) \right)}\\
    &=: M_{1,n} + M_{2,n}.
    \end{split}
\end{equation*}
We first treat $M_{1,n}$. Since for all $s \in [0,1)$ and $u \in \mathbb{R}$, there exists some constant $C_s$ so that
\begin{align*}
    \bigg|\log|1-u|\bigg|I\bigg(u \leq \frac{1}{2}\bigg) \leq C_s|u|^s.
\end{align*}
Using this with $u = \frac{1}{\sqrt{n}}\tilde  r_{ni}^{(0)}$ we get by Assumption \ref{(A2)}
\begin{equation}\label{eqA7}
     \left\Vert \frac{2}{n}\sumi{\Zast_i\tilde \psi^{(0)}_{ni}I\bigg(\bigg|\frac{\tilde  r_{ni}^{(0)}}{\sqrt{n}}\bigg| \leq  \frac{1}{2}\bigg)} \right \Vert \leq \frac{2C_s \Vert Z^\ast \Vert}{n}\sumi{\bigg|\frac{\tilde  r_{ni}^{(0)}}{\sqrt{n}}\bigg|^s} \lesssim \frac{1}{n}\sumi{\bigg|\frac{\tilde  r_{ni}^{(0)}}{\sqrt{n}}\bigg|^s}.
\end{equation}
The following lemma is required next.
\begin{lem}\label{lem1}
    There exists some $N$ such that for all $s' > 1$, \[ \sup_{n > N} \mathbb{E}\left[ \left \Vert \sqrt{n}(\tilde\alpha_n^{(0)} - \alpha_0) \right \Vert^{s}\right] < \infty.\]
\end{lem}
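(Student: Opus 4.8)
\emph{Proof plan for Lemma \ref{lem1}.} Fix $s>1$. Since $\mathbb{E}\|\cdot\|^{s}\le(\mathbb{E}\|\cdot\|^{s_0})^{s/s_0}$ for $s_0\ge s$ by Jensen's inequality, it suffices to establish the bound for $s\ge 2$. I would start from the exact decomposition \eqref{eq7.2}, writing $\sqrt n(\tilde\alpha^{(0)}_n-\alpha_0)=A_{1,n}-A_{2,n}$ with
\[
A_{1,n}=\Bigl(\tfrac{X^TX}{n}\Bigr)^{-1}W_n,\qquad A_{2,n}=\Bigl(\tfrac{X^TX}{n}\Bigr)^{-1}\tfrac{\Psi_n}{\sqrt n}\Bigl(\tfrac{X^TX}{n}+\tfrac{\Psi_n}{n}\Bigr)^{-1}\tfrac{X^TY}{n},\qquad W_n:=\tfrac{1}{\sqrt n}X^TD_n(\beta_0)\epsilon.
\]
By Assumption \ref{(A1)} there is $N$ with $\lam_{\min}(X^TX/n)\ge c>0$ for all $n\ge N$, and since $\Psi_n/n\succeq 0$ this also gives $\lam_{\min}(X^TX/n+\Psi_n/n)\ge c$, so both resolvents have spectral norm $\le 1/c$ on $\{n\ge N\}$. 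Using in addition $\|\Psi_n/\sqrt n\|\to\|\Psi_0\|$, $\|X^TX/n\|\to\|\Sigma^X\|$, and $X^TY/n=(X^TX/n)\alpha_0+n^{-1/2}W_n$, one gets (enlarging $N$) deterministic constants such that $\|A_{1,n}\|\le c^{-1}\|W_n\|$ and $\|A_{2,n}\|\lesssim 1+n^{-1/2}\|W_n\|$ for $n\ge N$; importantly, $N$ here depends only on Assumption \ref{(A1)} and the convergence of $\Psi_n/\sqrt n$, not on $s$.

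Everything thus reduces to a uniform-in-$n$ moment bound for $W_n=n^{-1/2}\sum_{i=1}^n V_{ni}$ with $V_{ni}:=e^{\frac12 Z_i^T\beta_0}X_i\epsilon_i$, a normalized sum of i.i.d.\ centred random vectors ($\mathbb{E}\epsilon_1=0$ by Assumption \ref{(A3)}). By Assumption \ref{(A2)} the rows $X_i$ and $Z_i$ are uniformly bounded, so $\|V_{ni}\|\le C_0|\epsilon_i|$ with $C_0$ independent of $n,i$; and Assumption \ref{(A3)}, whose exponent range $s\in(-\infty,1)$ covers all negative values, guarantees $\mathbb{E}|\epsilon_1|^m<\infty$ for every $m>0$, in particular $\mathbb{E}|\epsilon_1|^{s}<\infty$. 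Rosenthal's inequality (equivalently, the Marcinkiewicz--Zygmund inequality together with the elementary bound $n^{-1}\sum_i\mathbb{E}\|V_{ni}\|^2\le C_0^2\mathbb{E}\epsilon_1^2$) then yields
\[
\mathbb{E}\bigl[\|W_n\|^{s}\bigr]\;\le\;C_s\Bigl(\bigl(C_0^2\,\mathbb{E}\epsilon_1^2\bigr)^{s/2}+C_0^{s}\,n^{1-s/2}\,\mathbb{E}|\epsilon_1|^{s}\Bigr),
\]
whose right-hand side is bounded uniformly in $n$ (the second term $\to 0$ for $s>2$ and equals $C_0^s\mathbb{E}\epsilon_1^2$ for $s=2$). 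Substituting this into the bounds for $A_{1,n},A_{2,n}$ and using $\|x+y\|^s\le 2^{s-1}(\|x\|^s+\|y\|^s)$ gives $\sup_{n\ge N}\mathbb{E}[\|\sqrt n(\tilde\alpha^{(0)}_n-\alpha_0)\|^s]<\infty$.

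The single substantive point is this moment bound on $W_n$; the rest is bookkeeping with spectral norms. If one prefers to stay self-contained and avoid citing Rosenthal, the bound can be obtained directly for an even integer $s=2m$ by expanding $\mathbb{E}\|W_n\|^{2m}$, discarding every product in which some index appears exactly once (centred summands), noting that the surviving terms number $O(n^m)$ and each is controlled by a product of the moments $\mathbb{E}\|V_{ni}\|^{k}$, $2\le k\le 2m$ (finite by Assumption \ref{(A3)}), hence $O(1)$ apiece, so that the sum is $O(n^m)$ and $\mathbb{E}\|W_n\|^{2m}=O(1)$; the general case $s>1$ then follows by the Jensen interpolation noted at the outset. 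I expect the mild care needed in justifying the Rosenthal/combinatorial bound uniformly in $n$ (and in confirming that Assumption \ref{(A3)} indeed supplies all positive moments of $\epsilon_1$) to be the only delicate part.
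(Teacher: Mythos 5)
Your proof is correct and follows the same overall route as the paper's: the same decomposition \eqref{eq7.2}, spectral-norm control of the resolvent factors via Assumption \ref{(A1)} (with $N$ chosen independently of the exponent), and reduction to a uniform-in-$n$ moment bound for a normalized sum of centred errors. The one substantive difference is the final technical step: you apply Rosenthal's inequality (or the combinatorial even-moment expansion plus Jensen interpolation) directly to the weighted sum $W_n=n^{-1/2}\sum_i V_{ni}$, whereas the paper first bounds $\mathbb{E}\bigl[\|(\frac{X^TX}{n})^{-1}\frac{1}{\sqrt n}X^TD_n(\beta_0)\epsilon\|^{s'}\bigr]\lesssim\mathbb{E}\bigl[|\frac{1}{\sqrt n}\sum_i\epsilon_i|^{s'}\bigr]$ and then cites von Bahr (1965) and DasGupta (2008) for convergence/uniform integrability of moments in the CLT for i.i.d.\ sums. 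Your version is, if anything, a touch cleaner: the paper's comparison of the weighted sum to the unweighted one is a shortcut that glosses over the fact that the weights $e^{\frac12 Z_i^T\beta_0}X_i$ vary in sign and magnitude with $i$ (so the von Bahr result does not apply verbatim), whereas Rosenthal's inequality needs only that the summand norms are uniformly dominated by $C_0|\epsilon_i|$, which you correctly extract from Assumption \ref{(A2)}. Both arguments ultimately rest on the same reading of Assumption \ref{(A3)} as supplying $\mathbb{E}|\epsilon_1|^m<\infty$ for every $m>0$, which you also make explicit.
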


\begin{proof}
    Splitting $\sqrt{n}\left(\tilde\alpha_n^{(0)} - \alpha_0 \right)$ as \eqref{eq7.2}, we have for $s' > 1$,
    \begin{align*}
    &\mathbb{E}\left[\left\Vert \sqrt{n}\left(\tilde\alpha_n^{(0)} - \alpha_0 \right)\right\Vert^{s'}\right]\\
    &\leq \mathbb{E}\left[
    \left\Vert\left(\frac{X^T X}{n}\right)^{-1}\frac{1}{\sqrt{n}}X^T D_n(\beta_0)\epsilon\right\Vert^{s'}\right]\\
    & \quad+ \mathbb{E}\left[\left\Vert \left(\frac{X^T X}{n}\right)^{-1} \frac{\Psi_n}{\sqrt{n}} \left(\frac{X^T X}{n} + \frac{\Psi_n}{n}\right)^{-1}\frac{1}{n}X^T D_n(\beta_0)\epsilon\right\Vert^{s'}\right]\\
    &\quad+ \mathbb{E}\left[\left\Vert \left(\frac{X^T X}{n}\right)^{-1} \frac{\Psi_n}{\sqrt{n}} \left(\frac{X^T X}{n} + \frac{\Psi_n}{n}\right)^{-1}\frac{X^T X}{n}\alpha_0\right\Vert^{s'}\right].
    \end{align*}
The final term of the sum is non-random and so is finite for all $s'$. Then, using Assumptions \ref{(A1)} and \ref{(A2)}, we can also bound the first and second terms:
\begin{align*}
    \mathbb{E}\left[
    \left\Vert\left(\frac{X^T X}{n}\right)^{-1}\frac{1}{\sqrt{n}}X^T D_n(\beta_0)\epsilon\right\Vert^{s'}\right] \lesssim \mathbb{E}\left[\left\Vert \frac{1}{\sqrt{n}} \sumi \epsilon_i \right\Vert^{s'}\right]
\end{align*}
and 
\begin{align*}
    \mathbb{E}\left[\left\Vert \left(\frac{X^T X}{n}\right)^{-1} \frac{\Psi_n}{\sqrt{n}} \left(\frac{X^T X}{n} + \frac{\Psi_n}{n}\right)^{-1}\frac{1}{n}X^T D_n(\beta_0)\epsilon\right\Vert^{s'}\right] \lesssim \mathbb{E}\left[\left\Vert \frac{1}{n}\sumi \epsilon_i \right\Vert^{s'}\right].
\end{align*}
Therefore, taking the supremum yields
\begin{align*}
    \sup_{n>N} \mathbb{E}\left[\left\Vert \sqrt{n}\left(\tilde\alpha_n^{(0)} - \alpha_0 \right)\right\Vert^{s'}\right] &\lesssim \sup_{n>N} \mathbb{E}\left[\left\Vert \frac{1}{\sqrt{n}}\sumi \epsilon_i \right\Vert^{s'}\right].
\end{align*}
Finally, $\mathbb{E}\left[\Vert\epsilon_1\Vert^{s'}\right] < \infty$ (Assumption \ref{(A3)}) suffices for $\sup_{n>N} \mathbb{E}\left[\left\Vert \frac{1}{\sqrt{n}}\sumi \epsilon_i \right\Vert^{s'}\right] < \infty$ (Bahr(1965) \cite{VonBahr1965}, DasGupta (2008) \cite{DasGupta2008}).
\end{proof}

We will now estimate the upper bound in \eqref{eqA7} using the Markov and H\"older inequalities: under Assumptions \ref{(A2)} and \ref{(A3)}, for any positive constant $M$:
\begin{equation}\label{eq7.7}
    \begin{split}
        \sup_n \mathbb{P}\left[\frac{1}{n}\sumi{\bigg|\frac{\tilde  r_{ni}^{(0)}}{\sqrt{n}}\bigg|^s} > M n^{-s/2}\right] & \leq \sup_n \mathbb{P}\left[\frac{1}{n}\sumi \frac{\Vert \tilde u_n^{(0)}\Vert^s}{|\epsilon_i|^s} \gtrsim M \right]\\
        &\lesssim \sup_n \frac{1}{n}\sumi \mathbb{E}\left[\frac{\Vert \tilde u_n^{(0)}\Vert^s}{|\epsilon_i|^s}\right]\\
        &\lesssim \frac{1}{M}\sup_n \mathbb{E}\left[\Vert \tilde u_n^{(0)}\Vert^{sa}\right]^{\frac{1}{a}} \mathbb{E}\left[|\epsilon_1|^{-sb}\right]^{\frac{1}{b}}\\
        & \to 0        
    \end{split}
\end{equation}
as $M \to \infty$ by Lemma \ref{lem1}, provided that we choose $a$ and $b$ such that $a^{-1} + b^{-1} = 1$ and $sb < 1$. Therefore,
\begin{equation}\label{eq7.8}
    M_{1,n} = \frac{2}{n}\sumi{\Zast _i\tilde \psi^{(0)}_{ni}I\bigg(\bigg|\frac{\tilde  r_{ni}^{(0)}}{\sqrt{n}}\bigg| \leq  \frac{1}{2}\bigg)} = \mathcal{O}_p(n^{-\frac{s}{2}}).
\end{equation}
We now turn our attention to $M_{2,n}$. Taking expectations, we have 
\begin{align*}
   & \mathbb{E}\left[ \left \Vert \frac{2}{n}\sumi{\Zast _i\tilde \psi^{(0)}_{ni}I\bigg(\bigg|\frac{\tilde  r_{ni}^{(0)}}{\sqrt{n}}\bigg| >  \frac{1}{2}\bigg)}\right \Vert \right]
   \nn\\
&\lesssim \frac{1}{n}\sumi{\mathbb{E}\left[|\tilde \psi^{(0)}_{ni}|I\bigg(\bigg|\frac{\tilde  r_{ni}^{(0)}}{\sqrt{n}}\bigg| >  \frac{1}{2}\bigg)\right]}\\
    &\leq \frac{1}{n} \sumi{\mathbb{E}\left[|\tilde \psi^{(0)}_{ni}|^a\right]^{\frac{1}{a}}\mathbb{P}\left[ \left|\frac{\tilde  r_{ni}^{(0)}}{\sqrt{n}} \right | >  \frac{1}{2}\right]^{\frac{1}{b}}}\\
    &\leq \frac{1}{n}\left(\sup_n \sup_i \mathbb{E}\left[|\tilde \psi^{(0)}_{ni}|^a\right]^{\frac{1}{a}} \right)\sumi{\mathbb{P}\left[ \left|\frac{\tilde  r_{ni}^{(0)}}{\sqrt{n}} \right | >  \frac{1}{2}\right]^{\frac{1}{b}}},
\end{align*}
where we chose the exponents $a, b > 1$  $(1/a + 1/b = 1)$ independently of that in \eqref{eq7.7}.
We bound $\frac{1}{n}\sumi{\mathbb{P}\left[\left|\frac{\tilde  r_{ni}^{(0)}}{\sqrt{n}}\right| >  \frac{1}{2}\right]^{\frac{1}{b}}}$ similar to \eqref{eq7.7}. For any $t \in (s,1)$ there is some constant $C_t$ such that
\begin{equation}\label{eq7.9}
    \begin{split}
        \frac{1}{n}\sumi{\mathbb{P}\left[\left| \frac{\tilde r_{ni}^{(0)}}{\sqrt{n}}\right| >  \frac{1}{2}\right]^{\frac{1}{b}}} & = \frac{1}{n}\sumi{\mathbb{P}\left[\left| \tilde  r_{ni}^{(0)}\right|^t >  2^{-t}n^{\frac{t}{2}} \right]^{\frac{1}{b}}}\\
        & \leq \frac{1}{n}\sumi{\mathbb{P}\left[\frac{\Vert \tilde u_n^{(0)}\Vert^t}{|\epsilon_i|^t} >  C_t n^{\frac{t}{2}}\right]^{\frac{1}{b}}}\\
        &\lesssim \frac{1}{n}\sumi{n^{-\frac{t}{2b}} \mathbb{E}\left[ \frac{\Vert \tilde u_n^{(0)}\Vert^t}{|\epsilon_i|^t} \right]^{\frac{1}{b}}}\\
        & \lesssim n^{-\frac{t}{2b}}.
    \end{split}
\end{equation}
By choosing $b$ and $t$ so that $b>1$ and $\frac{t}{b} = s$, we get consistent results with \eqref{eq7.8}. Finally, for any $a' \in (0,1)$,
\begin{align*}
    |\tilde  \psi_{ni}^{(0)}|^a &= \left|\log \bigg|\frac{e^{\frac{1}{2}Z_i^T\beta_0}\epsilon_i - \frac{1}{\sqrt{n}}X_i^T \tilde u_n^{(0)}}{e^{\frac{1}{2}Z_i^T\beta_0}\epsilon_i}\bigg|^a \right|\\
    &\leq \bigg(\log\bigg|e^{\frac{1}{2}Z_i^T\beta_0}\epsilon_i -  \frac{1}{\sqrt{n}}X_i^T\tilde u_n^{(0)}\bigg| + \log|e^{\frac{1}{2}Z_i^T\beta_0}\epsilon_i|\bigg)^a\\
    &\lesssim \bigg(\log\bigg|e^{\frac{1}{2}Z_i^T\beta_0}\epsilon_i -  \frac{1}{\sqrt{n}}X_i^T\tilde u_n^{(0)}\bigg|\bigg)^a + \bigg(\log|e^{\frac{1}{2}Z_i^T\beta_0}\epsilon_i|\bigg)^a \\
    &\lesssim \bigg|e^{\frac{1}{2}Z_i^T\beta_0}\epsilon_i -  \frac{1}{\sqrt{n}}X_i^T\tilde u_n^{(0)}\bigg|^{a'} + \bigg|e^{\frac{1}{2}Z_i^T\beta_0}\epsilon_i -  \frac{1}{\sqrt{n}}X_i^T\tilde u_n^{(0)}\bigg|^{-a'}\\
    &\quad + \bigg|e^{\frac{1}{2}Z_i^T\beta_0}\epsilon_i\bigg|^{a'} + \bigg|e^{\frac{1}{2}Z_i^T\beta_0}\epsilon_i\bigg|^{-a'}.
\end{align*}
The last inequality follows from choosing a sufficiently small $l$ using the following bound:  For all real $l > 0$, there exists some constant $C_l$ such that for all $|u| \neq 0$
\begin{align*}
    \left|\log |u|\right| \leq C_l (|u|^l + |u|^{-l}).
\end{align*}
Taking expectation and supremum over $i$ and $n$, the first, third, and fourth terms are all finite under Assumptions \ref{(A2)} and \ref{(A3)}. Finally for sufficiently large $N$, by Assumption \ref{(A4)} we have 
\begin{equation*}
    \begin{split}
            &\sup_{n > N} \sup_{i} \mathbb{E}\left[\left|e^{\frac{1}{2}Z_i^T\beta_0}\epsilon_i - \frac{1}{\sqrt{n}}X_i^T\tilde u_n^{(0)}\right|^{-a'}\right]
            < \infty,
    \end{split}
\end{equation*}
Along with \eqref{eq7.9}, we obtain
\begin{equation}\label{eq7.11}
    M_{2,n} = \frac{2}{n}\sumi{\Zast _i\tilde \psi^{(0)}_{ni}I\left(\left|\frac{\tilde  r_{ni}^{(0)}}{\sqrt{n}}\right| >  \frac{1}{2}\right)} 
     = \mathcal{O}_p(n^{-\frac{s}{2}}).
\end{equation}
Combining the results of \eqref{eq7.8} and \eqref{eq7.11} proves \eqref{eq3.4}. If there exists $\tau > 0$ such that $\mathbb{E}\left[|\epsilon_1|^{-(1+\tau)}\right] < \infty$, then in \eqref{eq7.7} we can instead allow $s = 1$ and $b \leq 1 + \tau$ to get $M_{1,n} = \smallO_p(n^{-\frac{1}{2}})$ in \eqref{eq7.8} and in \eqref{eq7.9} let $1 < b <  t \leq 1 + \tau$ to obtain $M_{2,n} = \smallO_p(n^{-\frac{1}{2}})$, which show $\left(\frac{Z^{\ast T}\Zast }{n}+\frac{\Omega_n^\ast}{n}\right)^{-1}\frac{Z^{\ast T}R_n}{n^{1/2}} \to 0 $ in \eqref{eq7.4}. Finally, we deduce \eqref{eq3.5} using \eqref{eq7.5}.

\section{Proof of Theorem \ref{thm2}}\label{secB}
First, we write
\begin{align*}
    &\sqrt{n}(\tilde\alpha_n^{(1)} - \alpha_0)\\
    &= \sqrt{n}\left(\left(\frac{X^T D_n^{-2}(\tilde\beta_n^{(0)})X}{n} + \Lambda_n \frac{T(\tilde\alpha_n^{(0)})}{n}\right)^{-1}\frac{1}{n}X^T D_n^{-2}(\tilde\beta_n^{(0)})Y - \alpha_0 \right)\\
    &= \sqrt{n}\left(\left(\tilde\Sigma_{n-}^{(0)} + \Lambda_n \frac{T(\tilde\alpha_n^{(0)})}{n}\right)^{-1}\tilde\Sigma_{n-}^{(0)}\alpha_0 - \alpha_0 \right) \\
    &\quad+ \left(\tilde\Sigma_{n-}^{(0)} + \Lambda_n \frac{T(\tilde\alpha_n^{(0)})}{n}\right)^{-1}\frac{1}{\sqrt{n}}X^T D_n^{-2}(\tilde\beta_n^{(0)})D_n(\beta_0)\epsilon  \\
    &\equiv I_{1,n} + I_{2,n},
\end{align*}
where we denote $\tilde\Sigma_{n-}^{(0)} = \frac{X^T D_n^{-2}(\tilde\beta_n^{(0)})X}{n}$.  We will first prove $I_{1,n} \cip 0$ and then simplify the expression of $I_{2,n}$. 

First, if no entries are exactly 0, then $I_{1,n} \cip 0$ follows from the consistency of $\tilde\beta_n^{(0)}$, $\Lambda_n \frac{T(\tilde\alpha_n^{(0)})}{\sqrt{n}} \cip 0$ and the matrix inversion formula, which is similar to what we have proved in Theorem \ref{thm1}. Otherwise, suppose $p_0 < p$ are non-zero (The case where $p_0 = 0$ is similar to the result below without needing to consider block matrices), in which case we let 
\begin{align*}
    M &= \left(\frac{X^T D_n^{-2}(\tilde\beta_n^{(0)})X}{n} + \Lambda_n \frac{T(\tilde\alpha_n^{(0)})}{n}\right)\\
    &= \left(\tilde\Sigma_{n-}^{(0)} + \Lambda_n \frac{T(\tilde\alpha_n^{(0)})}{n}\right)\\
    &= \begin{pmatrix}
        (\tilde\Sigma_{n-}^{(0)})_{\star\star}+\Lambda_n\frac{T_\star(\tilde\alpha_n^{(0)})}{n} & (\tilde\Sigma_{n-}^{(0)})_{\star\circ}\\
        (\tilde\Sigma_{n-}^{(0) T})_{\star\circ} & (\tilde\Sigma_{n-}^{(0)})_{\circ\circ}+\Lambda_n\frac{T_\circ(\tilde\alpha_n^{(0)})}{n}
    \end{pmatrix}\\
     &=: \begin{pmatrix}
    A & B \\
    B^T  & D \\
    \end{pmatrix}.
\end{align*}
The Schur complement of $M$ is given by
\begin{align*}
    S &:= D - B^T A^{-1}B\\
      &=\left((\tilde\Sigma_{n-}^{(0)})_{\circ\circ}+\Lambda_n\frac{T_\circ(\tilde\alpha_n^{(0)})}{n}\right) - (\tilde\Sigma_{n-}^{(0)} )_{\star\circ}\left((\tilde\Sigma_{n-}^{(0)})_{\star\star}+\Lambda_n\frac{T_\star(\tilde\alpha_n^{(0)})}{n}\right)^{-1}(\tilde\Sigma_{n-}^{(0)})_{\star\circ} ,
\end{align*}
and the inverse of $M$ is (Lu and Shiou (2002) \cite{BlockMat})
\begin{equation}\label{eq7.12}
M^{-1}= \begin{pmatrix}
    M^{-1}_{\star\star} & M^{-1}_{\star\circ}\\
    (M^{-1}_{\star\circ})^T & M^{-1}_{\circ\circ}
    \end{pmatrix} =
\begin{pmatrix}
A^{-1} + A^{-1}BS^{-1}B^T A^{-1} & -A^{-1}BS^{-1} \\
-S^{-1}B^T A^{-1} & S^{-1}
\end{pmatrix}.
\end{equation}
Here, the matrices $(A,B,D,M,S) = (A_n,B_n,D_n,M_n,S_n)$ are written without the subscript $n$ for brevity. Next, we rewrite $\frac{1}{\sqrt{n}}I_{1,n}$ as follows:
\begin{equation}\label{eq7.13}
    \begin{split}
    \frac{1}{\sqrt{n}}I_{1,n}
    &= \frac{1}{\sqrt{n}}\begin{pmatrix}
        I_{1,n}'\\
        I_{1,n}''
    \end{pmatrix}\\
    &= M^{-1}\tilde\Sigma_{n-}^{(0)}\alpha_0 - \alpha_0\\
    &= \begin{pmatrix}
    M^{-1}_{\star\star} & M^{-1}_{\star\circ} \\
    (M^{-1}_{\star\circ})^T & M^{-1}_{\circ\circ}
    \end{pmatrix}
    \begin{pmatrix}
    (\tilde\Sigma_{n-}^{(0)})_{\star\star} & (\tilde\Sigma_{n-}^{(0)})_{\star\circ} \\
    (\tilde\Sigma_{n-}^{(0)})_{\star\circ}^T & (\tilde\Sigma_{n-}^{(0)})_{\circ\circ}
    \end{pmatrix}
    \begin{pmatrix}
    \alpha_{0\star} \\
    0
    \end{pmatrix} - \alpha_0 \\
    &= \begin{pmatrix}
    (M^{-1}_{\star\star} (\tilde\Sigma_{n-}^{(0)})_{\star\star} + M^{-1}_{\star\circ}(\tilde\Sigma_{n-}^{(0)})_{\star\circ}^T)\alpha_{0\star} \\
    \left((M^{-1}_{\star\circ})^T (\tilde\Sigma_{n-}^{(0)})_{\star\star} + M^{-1}_{\circ\circ}(\tilde\Sigma_{n-}^{(0)})_{\star\circ}^T\right)\alpha_{0\star}
    \end{pmatrix} - \begin{pmatrix}
    \alpha_{0\star}\\
    0
    \end{pmatrix}.
    \end{split}
\end{equation}
We will proceed to evaluate the limits of the rows
\begin{center}
    $\left(M^{-1}_{\star\star} (\tilde\Sigma_{n-}^{(0)})_{\star\star} + M^{-1}_{\star\circ}(\tilde\Sigma_{n-}^{(0)})_{\star\circ}^T\right)\alpha_{0\star} - \alpha_{0\star}$ 
\end{center}
and
\begin{center}
    $\left((M^{-1}_{\star\circ})^T (\tilde\Sigma_{n-}^{(0)})_{\star\star} + M^{-1}_{\circ\circ}(\tilde\Sigma_{n-}^{(0)})_{\star\circ}^T\right)\alpha_{0\star}$
\end{center}
separately. For the upper part $I_{1,n}'$, note that $(\tilde\Sigma_{n-}^{(0)})_{\star\circ} = B$, so by \eqref{eq7.12} we get 
\begin{equation}\label{eq7.14}
    \begin{split}
        &M^{-1}_{\star\star}(\tilde\Sigma_{n-}^{(0)})_{\star\star}+M^{-1}_{\star\circ}(\tilde\Sigma_{n-}^{(0)})_{\star\circ}^T\\
        =& 
    (A^{-1}+A^{-1}BS^{-1}B^T A^{-1})(\tilde\Sigma_{n-}^{(0)})_{\star\star} - A^{-1}BS^{-1}B^T \\
    =& A^{-1}(\tilde\Sigma_{n-}^{(0)})_{\star\star} + A^{-1}BS^{-1}B^T (A^{-1}(\tilde\Sigma_{n-}^{(0)})_{\star\star}-I_{p_0}).
    \end{split}
\end{equation}
Moreover by \eqref{eq7.1}, since $\Lambda_n\frac{T_\star(\tilde\alpha_n^{(0)})}{n} = \mathcal{O}_p(1/n)$, 
\begin{equation*}
    A^{-1}(\tilde\Sigma_{n-}^{(0)})_{\star\star} = \left((\tilde\Sigma_{n-}^{(0)})_{\star\star}+\Lambda_n\frac{T_\star(\tilde\alpha_n^{(0)})}{n}\right)^{-1}(\tilde\Sigma_{n-}^{(0)})_{\star\star} = I_{p_0} + \mathcal{O}_p\left(\frac{1}{n}\right).
\end{equation*}
Finally by Assumption \ref{(A1)}, $\Vert B \Vert$ is bounded for large enough $n$, and since $\Vert A^{-1} \Vert \leq \left\Vert \left(\tilde\Sigma_{n-}^{(0)}\right)^{-1}\right\Vert$, $\Vert A^{-1} \Vert$ is bounded for all large enough $n$. Also, $\Vert M^{-1} \Vert \leq K_X$ so $\Vert S^{-1} \Vert$ is also bounded for all sufficiently large $n$, which jointly imply
\[A^{-1}BS^{-1}B^T  = \mathcal{O}_p(1).\]
Therefore,
\begin{align*}
    M^{-1}_{\star\star}(\tilde\Sigma_{n-}^{(0)})_{\star\star}+M^{-1}_{\star\circ}(\tilde\Sigma_{n-}^{(0)})_{\star\circ}^T = I_{p_0} + \mathcal{O}_p\left(\frac{1}{n}\right),
\end{align*}
and thus
\begin{equation}\label{eq7.16}
    \sqrt{n}\left((M^{-1}_{\star\star}(\tilde\Sigma_{n-}^{(0)})_{\star\star}+M^{-1}_{\star\circ}(\tilde\Sigma_{n-}^{(0)})_{\star\circ}^T)\alpha_{0\star} - \alpha_{0\star} \right) \cip 0.
\end{equation}
For the lower part $I_{1,n}''$, we get 
\begin{equation}\label{eq7.17}
    \begin{split}
        (M^{-1}_{\star\circ})^T (\tilde\Sigma_{n-}^{(0)})_{\star\star} + M^{-1}_{\circ\circ}(\tilde\Sigma_{n-}^{(0)})_{\star\circ}^T &= -S^{-1}B^T A^{-1}(\tilde\Sigma_{n-}^{(0)})_{\star\star} + S^{-1}(\tilde\Sigma_{n-}^{(0)})_{\star\circ}^T \\
    & = S^{-1}B^T (I_{p_0}-A^{-1}(\tilde\Sigma_{n-}^{(0)})_{\star\star})\\
    & = S^{-1}B^T \left(I_{p_0}-I_{p_0}+\mathcal{O}_p\left(\frac{1}{n}\right)\right) \\
    & = \mathcal{O}_p\left(\frac{1}{n}\right).
    \end{split}
\end{equation}
Therefore,
\begin{equation}\label{eq7.18}
    \sqrt{n}\left((M^{-1}_{\star\circ})^T (\tilde\Sigma_{n-}^{(0)})_{\star\star} + M^{-1}_{\circ\circ}(\tilde\Sigma_{n-}^{(0)})_{\star\circ}^T\right)\alpha_{0\star} \cip 0.
\end{equation}
Substituting \eqref{eq7.16} and \eqref{eq7.18} into \eqref{eq7.13} gives 
\begin{center}
    $I_{1,n} = \smallO_p(1)$
\end{center}
as required. 

Moving onto $I_{2,n}$, the consistency of $\tilde\beta_n^{(0)}$ and $\sup_n \Vert Z \Vert < \infty$ (Assumption \ref{(A2)}) imply

    \[\max_{1\leq i \leq n} |e^{-Z_i^T\beta_0} - e^{-Z_i^T \tilde\beta_n^{(0)}}| \cip 0,\]

\[
    \tilde\Sigma_{n-}^{(0)} - \frac{X^TD^{-2}_n(\beta_0)X}{n} \cip 0
\]
and 
\[
    X^T D_n^{-2}(\tilde\beta_n^{(0)})D_n(\beta_0)\epsilon - X^T D_n^{-1}(\beta_0)\epsilon \cip 0.
\]
Writing $\Pi^X_n \coloneqq \frac{1}{\sqrt{n}}X^T D_n^{-1}(\beta_0)\epsilon$ then gives
\begin{equation}\label{eq7.19}
    \begin{split}
        I_{2,n} &= \left(\tilde\Sigma_{n-}^{(0)} + \Lambda_n \frac{T(\tilde\alpha_n^{(0)})}{n}\right)^{-1}\frac{1}{\sqrt{n}}X^T D_n^{-2}(\tilde\beta_n^{(0)})D_n(\beta_0)\epsilon\\
    &= \left(\frac{X^TD^{-2}_n(\beta_0)X}{n} + \Lambda_n \frac{T(\tilde\alpha_n^{(0)})}{n}\right)^{-1}\Pi^X_n + \smallO_p(1).
    \end{split}
\end{equation}
We can use the Lindeberg CLT analogously as we did for \eqref{eq7.3} to obtain
\[\Pi^X_n \cil N(0, \Sigma_-^X).\]
Further, by noting $\left(\frac{X^TD^{-2}_n(\beta_0)X}{n}-\Sigma_-^X\right) = \smallO(1)$ and $\Lambda_n \frac{T(\tilde\alpha_n^{(0)})}{n} - \begin{pmatrix}
        0 & 0 \\
        0 & \Lambda_{0\circ}T_\circ(\tilde u_n^{(0)})
    \end{pmatrix} = \smallO_p(1)$, we may appeal to \eqref{eq7.1} and $\Pi_n^X = \mathcal{O}_p(1)$ to conclude
\begin{align*}
    &\left(\frac{X^TD^{-2}_n(\beta_0)X}{n} + \Lambda_n \frac{T(\tilde\alpha_n^{(0)})}{n}\right)^{-1}\Pi^X_n \\
    =& \Bigg(\Sigma_-^X  + \begin{pmatrix}
        0 & 0 \\
        0 & \Lambda_{0\circ}T_\circ(\tilde u_n^{(0)})
    \end{pmatrix} +\left(\frac{X^TD^{-2}_n(\beta_0)X}{n}-\Sigma_-^X\right)\\
    &\quad+ \left(\Lambda_n \frac{T(\tilde\alpha_n^{(0)})}{n} - \begin{pmatrix}
        0 & 0 \\
        0 & \Lambda_{0\circ}T_\circ(\tilde u_n^{(0)})
    \end{pmatrix}\right)\Bigg)^{-1}\Pi^X_n\\
    = & \left(\Sigma_-^X + \begin{pmatrix}
        0 & 0 \\
        0 & \Lambda_{0\circ}T_\circ(\tilde u_n^{(0)})
    \end{pmatrix}\right)^{-1}\Pi^X_n + \smallO_p(1).
\end{align*}
Returning to \eqref{eq7.19}, we obtain
\begin{equation}\label{eq7.20}
    \begin{split}
        I_{2,n} &= \left(\frac{X^TD^{-2}_n(\beta_0)X}{n} + \Lambda_n \frac{T(\tilde\alpha_n^{(0)})}{n}\right)^{-1}\Pi^X_n + \smallO_p(1)\\
    &= \left(\Sigma_-^X + \begin{pmatrix}
        0 & 0 \\
        0 & \Lambda_{0\circ}T_\circ(\tilde u_n^{(0)})
    \end{pmatrix}\right)^{-1}\Pi^X_n + \smallO_p(1),
    \end{split}
\end{equation}
which proves \eqref{eq3.8}. 

We now move on to prove the corresponding results for $\tilde\beta^{\ast(1)}_n$ by first writing
\begin{align*}
    &\tilde\beta_n^{\ast(1)}-\basttrue\\
    &= \left(\frac{Z^{\ast T}\Zast }{n} + \gamastn\frac{S(\bastinit)}{n}\right)^{-1}\frac{1}{n}Z^{\ast T}L_n(\tilde\alpha_n^{(1)}) - \basttrue\\
    &= \left[\left(\frac{Z^{\ast T}\Zast }{n} + \gamastn\frac{S(\bastinit)}{n}\right)^{-1}\frac{Z^{\ast T}\Zast }{n}\basttrue - \basttrue\right]\\
    &+ \left(\frac{Z^{\ast T}\Zast }{n} + \gamastn\frac{S(\bastinit)}{n}\right)^{-1}\frac{1}{n}Z^{\ast T}E_n + \left(\frac{Z^{\ast T}\Zast }{n} + \gamastn\frac{S(\bastinit)}{n}\right)^{-1}\frac{1}{n}Z^{\ast T}R_n^{(1)}\\
     &\equiv J_{1,n} + J_{2,n} +J_{3,n},
\end{align*}
where 
\begin{center}
    $R_n^{(1)} := L(\tilde\alpha^{(1)}_n) - L_n(\alpha_0)$.
\end{center}

The term $J_{1,n}$ can be shown to be $\smallO_p(\frac{1}{\sqrt{n}})$ in the exact same way we did for $I_{1,n}$, swapping out $\tilde\Sigma_{n-}^{(0)}$ for $\frac{Z^{\ast T}\Zast }{n}$; $\Lambda_n$ for $\Gamma_n^\ast$; $\tilde\alpha_n^{(0)}$ for $\bastinit$ and $\alpha_0$ for $\basttrue$. Then under Assumption \ref{(A3)}, we denote $\Pi_n^Z \coloneqq \frac{1}{\sqrt{n}}Z^{\ast T}E_n$ and use the Lindeberg CLT to show \[\Pi_n^Z \cil N\left(0, \Sigma^{\Zast}\var[\log(\epsilon_1^2)]\right),\]
which also implies $J_{2,n} = O_p(\frac{1}{\sqrt{n}})$.

Finally, under Lemma \ref{lem2} (stated below), we can follow the proof of Theorem \ref{thm1} to show that $J_{3,n} = O_p(n^{-\frac{c}{2}})$ for all $c \in (0,1)$ by bounding the term $\frac{1}{n}Z^{\ast T}R_n^{(1)}$, which gives \eqref{eq3.9}. Further, with the stronger condition of $\mathbb{E}\left[|\epsilon_1|^{-(1+\tau)}\right] < \infty$ for some $\tau > 0$, we can proceed as in \eqref{eq7.20} to get 
\begin{align*}
    \sqrt{n}J_{2,n} &=  \left(\frac{Z^{\ast T}\Zast }{n} + \gamastn\frac{S(\bastinit)}{n}\right)^{-1} \Pi_n^Z\\
    &= \left(\Sigma^{\Zast} + \begin{pmatrix}
    0 & 0\\
    0 & \Gamma_{0\circ}^\ast S_\circ(v_n^{(0)})
\end{pmatrix}\right)^{-1} \Pi_n^Z + \smallO_p(1),
\end{align*} 
which gives \eqref{eq3.10}. The statement for general $k \geq 2$ follows inductively. 

We state a similar lemma as Lemma \ref{lem1} to prove corresponding results for $\tilde\beta^{\ast(k)}_n$. 

\begin{lem}\label{lem2}
    For each $k \geq 1$, there exists some $N$ such that for all $s > 1$, \[ \sup_{n > N} \mathbb{E}\left[ \left \Vert \sqrt{n}(\tilde\alpha_n^{(k)} - \alpha_0) \right \Vert^{s}\right] < \infty.\]
\end{lem}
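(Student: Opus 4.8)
The plan is to prove the lemma by induction on $k$, following the scheme of the proof of Lemma \ref{lem1} but with the representation of $\tilde\alpha_n^{(k)}$ used in the proof of Theorem \ref{thm2} in place of \eqref{eq7.2}. Fix $k\ge 1$ and assume the bound already holds for $k-1$ (the case $k-1=0$ being Lemma \ref{lem1}). As in the proof of Theorem \ref{thm2}, write
\[
\sqrt{n}\bigl(\tilde\alpha_n^{(k)}-\alpha_0\bigr)=I_{1,n}^{(k)}+I_{2,n}^{(k)},\qquad M_n:=\tilde\Sigma_{n-}^{(k-1)}+\Lambda_n\tfrac{T(\tilde\alpha_n^{(k-1)})}{n},
\]
with $I_{1,n}^{(k)}=-M_n^{-1}\tfrac{1}{\sqrt n}\Lambda_n T(\tilde\alpha_n^{(k-1)})\alpha_0$, $I_{2,n}^{(k)}=M_n^{-1}\tfrac{1}{\sqrt n}X^TD_n^{-2}(\tilde\beta_n^{(k-1)})D_n(\beta_0)\epsilon$, and $\tilde\Sigma_{n-}^{(k-1)}=X^TD_n^{-2}(\tilde\beta_n^{(k-1)})X/n$. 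Because $\Lambda_n T(\tilde\alpha_n^{(k-1)})/n\succeq 0$, Assumption \ref{(A1)} gives $\Vert M_n^{-1}\Vert\le\Vert(\tilde\Sigma_{n-}^{(k-1)})^{-1}\Vert\le K_X$ for all $n>N$, so it is enough to bound, uniformly in $n$, the $s$-th moments of the two inner vectors $\tfrac{1}{\sqrt n}X^TD_n^{-2}(\tilde\beta_n^{(k-1)})D_n(\beta_0)\epsilon$ and $\tfrac{1}{\sqrt n}\Lambda_n T(\tilde\alpha_n^{(k-1)})\alpha_0$.

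For the noise term I would split
\[
\tfrac{1}{\sqrt n}X^TD_n^{-2}(\tilde\beta_n^{(k-1)})D_n(\beta_0)\epsilon=\Pi^X_n+\tfrac{1}{\sqrt n}\sumi e^{-\frac12 Z_i^T\beta_0}\bigl(e^{-Z_i^T(\tilde\beta_n^{(k-1)}-\beta_0)}-1\bigr)X_i\epsilon_i,
\]
where $\Pi^X_n=\tfrac{1}{\sqrt n}X^TD_n^{-1}(\beta_0)\epsilon$ is a $1/\sqrt n$-normalised sum of independent, mean-zero vectors with coefficients bounded uniformly in $i$ and $n$ (Assumption \ref{(A2)}); since Assumption \ref{(A3)} (taking $s<0$ there) gives $\mathbb{E}[|\epsilon_1|^p]<\infty$ for every $p>0$, a Rosenthal / Marcinkiewicz–Zygmund bound yields $\sup_n\mathbb{E}[\Vert\Pi^X_n\Vert^s]<\infty$ exactly as in the proof of Lemma \ref{lem1}. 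For the remaining summand I would use $|e^x-1|\le|x|e^{|x|}$ and $\sup_i\Vert Z_i\Vert<\infty$ to dominate it by a bounded-$L^s$ empirical-process factor times $\Vert\tilde\beta_n^{(k-1)}-\beta_0\Vert$; equivalently, dominate it by $\sup_{\beta\in\Theta_\beta}\bigl\Vert\tfrac{1}{\sqrt n}\sumi e^{-\frac12 Z_i^T\beta_0}(e^{-Z_i^T(\beta-\beta_0)}-1)X_i\epsilon_i\bigr\Vert$ and apply a maximal inequality over the compact set $\Theta_\beta$, each fixed $\beta$ giving a bounded-$L^s$ sum and the $\beta$-gradient being uniformly $L^s$-bounded by the same Rosenthal estimate (if the estimators are not a priori confined to $\Theta_\beta$, one first restricts to the high-probability event $\{\tilde\beta_n^{(k-1)}\in\Theta_\beta\}$, whose complement has probability $\mathcal{O}(n^{-r})$ for every $r$ by \eqref{eq3.9}). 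Either route consumes an $L^s$-control of $\tilde\beta_n^{(k-1)}-\beta_0$, so in practice the induction should be run jointly with the analogous statement for $\tilde\beta_n^{\ast(k-1)}$, obtained from \eqref{eq2.5} by the same argument.

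For the bias term, note that $T(\tilde\alpha_n^{(k-1)})\alpha_0$ has nonzero entries only at the nonzero coordinates of $\alpha_0$, i.e.\ equals $T_\star(\tilde\alpha_{n\star}^{(k-1)})\alpha_{0\star}$; by the inductive hypothesis $\tilde\alpha_{n\star}^{(k-1)}=\alpha_{0\star}+n^{-1/2}\tilde u_{n\star}^{(k-1)}$ with $\sup_n\mathbb{E}[\Vert\tilde u_{n\star}^{(k-1)}\Vert^s]<\infty$ for all $s$, whence each $\tilde\alpha_{n,j}^{(k-1)}$ with $\alpha_{0,j}\ne 0$ lies in $[\tfrac12|\alpha_{0,j}|,\tfrac32|\alpha_{0,j}|]$ off an event of probability $\mathcal{O}(n^{-r})$ for every $r$; a routine negative-moment estimate for $\tilde\alpha_{n,j}^{(k-1)}$ on the complement then gives $\sup_n\mathbb{E}[\Vert T_\star(\tilde\alpha_{n\star}^{(k-1)})\alpha_{0\star}\Vert^s]<\infty$, and together with $\Vert\Lambda_n\Vert/\sqrt n\to 0$ (as in Theorem \ref{thm2}) this makes $\mathbb{E}[\Vert I_{1,n}^{(k)}\Vert^s]\to 0$. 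Combining the two estimates closes the induction. The main obstacle is the dependence of the weights $D_n^{-2}(\tilde\beta_n^{(k-1)})$ on the data: the summands $e^{-Z_i^T\tilde\beta_n^{(k-1)}}X_i\epsilon_i$ are no longer independent, so Rosenthal's inequality is not directly applicable, and one must either pass to the uniform-in-$\Theta_\beta$ bound or propagate the $L^s$-estimate of $\tilde\beta_n^{\ast(k-1)}$ through the induction; the negative-moment control of the nonzero coordinates of $\tilde\alpha_n^{(k-1)}$ entering $I_{1,n}^{(k)}$ is a secondary, more routine, technical point.
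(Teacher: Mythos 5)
Your decomposition $\sqrt{n}(\tilde\alpha_n^{(k)}-\alpha_0)=I_{1,n}^{(k)}+I_{2,n}^{(k)}$ with $I_{1,n}^{(k)}=-M_n^{-1}n^{-1/2}\Lambda_n T(\tilde\alpha_n^{(k-1)})\alpha_0$ is algebraically the same as the paper's, and your treatment of the noise term $I_{2,n}^{(k)}$ via the uniform-in-$\Theta_\beta$ maximal inequality is exactly the route the paper takes (Sobolev inequality on the compact $\Theta_\beta$ followed by Burkholder). But your bound on the bias term $I_{1,n}^{(k)}$ has a real gap. You estimate it by separating $\|M_n^{-1}\|\le K_X$ from $\mathbb{E}\bigl[\|T_\star(\tilde\alpha_{n\star}^{(k-1)})\alpha_{0\star}\|^s\bigr]$ and then invoke a ``routine negative-moment estimate'' for $\tilde\alpha_{n,j}^{(k-1)}$ on the small event where some nonzero-indexed coordinate strays near zero. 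That estimate is not routine --- it would require $\sup_n\mathbb{E}\bigl[|\tilde\alpha_{n,j}^{(k-1)}|^{-2s}\bigr]<\infty$ with $2s>2$, which fails whenever the density of $\tilde\alpha_{n,j}^{(k-1)}$ is merely bounded (rather than vanishing fast) at zero; by the (KC) facts cited in the paper, a bounded density only yields finite negative moments of order strictly below $1$. Under Assumption \ref{(A3)} alone there is no reason for the ridge iterate to have such strongly vanishing density at zero, so your argument breaks. The paper avoids this entirely: instead of bounding the two factors separately, it keeps the combination
\[
A^{-1}(\tilde\Sigma_{n-}^{(0)})_{\star\star}-I_{p_0}
= -\bigl((\tilde\Sigma_{n-}^{(0)})_{\star\star}\bigr)^{-1}\Lambda_n\frac{T_\star(\tilde\alpha_n^{(0)})}{n}\,A^{-1}(\tilde\Sigma_{n-}^{(0)})_{\star\star}
\]
intact (through the Schur/block-inverse form \eqref{eq7.12}--\eqref{eq7.14}), observes that this combined quantity is uniformly bounded in operator norm by Assumption \ref{(A1)} (the penalty inflates $A$ exactly as much as $T_\star$ blows up, so the product stays $\mathcal{O}(1)$ deterministically), and then splits on the event $H_n=\{\|n^{1/4}(\tilde\alpha_{n\star}^{(0)}-\alpha_{0\star})\|\le h_0\}$: on $H_n$ the factor $T_\star/\sqrt{n}$ is controlled because $n^{1/4}\alpha_{0j}\to\pm\infty$, and on $H_n^c$ one uses the crude uniform bound together with Markov and Lemma \ref{lem1}. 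No negative moment of any iterate is needed. A secondary issue: you infer $\mathbb{P}(\tilde\beta_n^{(k-1)}\notin\Theta_\beta)=\mathcal{O}(n^{-r})$ for every $r$ from \eqref{eq3.9}, but \eqref{eq3.9} is only an $\mathcal{O}_p(1)$ tightness statement, not a moment or polynomial-tail bound, so that step is not justified as written; the paper simply works with $\sup_{\beta\in\Theta_\beta}$ directly.
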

\proof
    We only consider $k = 1$ as the general case follows the same proof. We write $\sqrt{n}(\tilde\alpha_n^{(1)} - \alpha_0) = I_{1,n} + I_{2,n}$ as before, and we will bound the moments of each of the two terms.
    
    Using the same notation as in \eqref{eq7.13}, we may write $I_{1,n}$ as follows and bound the upper and lower parts separately:
    \begin{equation*}
        \begin{split}
            I_{1,n} &= \sqrt{n}\left\{\begin{pmatrix}
    \left(M^{-1}_{\star\star} (\tilde\Sigma_{n-}^{(0)})_{\star\star} + M^{-1}_{\star\circ}(\tilde\Sigma_{n-}^{(0)})_{\star\circ}^T\right)\alpha_{0\star} \\
    \left((M^{-1}_{\star\circ})^T (\tilde\Sigma_{n-}^{(0)})_{\star\star} + M^{-1}_{\circ\circ}(\tilde\Sigma_{n-}^{(0)})_{\star\circ}^T\right)\alpha_{0\star}
    \end{pmatrix} - \begin{pmatrix}
    \alpha_{0\star}\\
    0
    \end{pmatrix}\right\} \\
    &=  \begin{pmatrix}
    \sqrt{n} \left(M^{-1}_{\star\star} (\tilde\Sigma_{n-}^{(0)})_{\star\star} + M^{-1}_{\star\circ}(\tilde\Sigma_{n-}^{(0)})_{\star\circ}^T - I_{p_0}\right)\alpha_{0\star} \\
    \sqrt{n} \left((M^{-1}_{\star\circ})^T (\tilde\Sigma_{n-}^{(0)})_{\star\star} + M^{-1}_{\circ\circ}(\tilde\Sigma_{n-}^{(0)})_{\star\circ}^T\right)\alpha_{0\star}
    \end{pmatrix}
    =:\begin{pmatrix}
        I_{1,n}' \\ I_{1,n}''
    \end{pmatrix}.
        \end{split}
    \end{equation*}
First, we look at $I_{1,n}'$. As we previously expanded in \eqref{eq7.14}, for $s > 1$  
\begin{align*}
    &\mathbb{E}\left[\left\Vert \sqrt{n}\left(M^{-1}_{\star\star} (\tilde\Sigma_{n-}^{(0)})_{\star\star} + M^{-1}_{\star\circ}(\tilde\Sigma_{n-}^{(0)})_{\star\circ}^T - I_{p_0}\right)\right\Vert^s\right]\\
    &= \mathbb{E}\left[\left\Vert \sqrt{n}\left(A^{-1}(\tilde\Sigma_{n-}^{(0)})_{\star\star} - I_{p_0} + A^{-1}BS^{-1}B^T \left(A^{-1}(\tilde\Sigma_{n-}^{(0)}\right)_{\star\star}-I_{p_0})\right)\right\Vert^s\right]\\
    \lesssim & \mathbb{E}\left[\left\Vert \sqrt{n}\left(A^{-1}(\tilde\Sigma_{n-}^{(0)})_{\star\star} - I_{p_0}\right)\right\Vert^s\right] 
    \nn\\
    &{}\qquad + \mathbb{E}\left[\left\Vert \sqrt{n}\left(A^{-1}BS^{-1}B^T \left(A^{-1}(\tilde\Sigma_{n-}^{(0)})_{\star\star}-I_{p_0}\right)\right)\right\Vert^s\right].
\end{align*}
By the matrix inversion formula \eqref{eq7.1}, we also have 
\begin{align*}
    &\sqrt{n} \left(A^{-1}(\tilde\Sigma_{n-}^{(0)})_{\star\star} - I_{p_0}\right)\\
    &= \sqrt{n}\left\{\left((\tilde\Sigma_{n-}^{(0)})_{\star\star}+\Lambda_n\frac{T_\star(\tilde\alpha_n^{(0)})}{n}\right)^{-1}(\tilde\Sigma_{n-}^{(0)})_{\star\star} - I_{p_0}\right\}\\    
    &= -\left((\tilde\Sigma_{n-}^{(0)})_{\star\star}\right)^{-1}\Lambda_n\frac{T_\star(\tilde\alpha_n^{(0)})}{\sqrt{n}}\left((\tilde\Sigma_{n-}^{(0)})_{\star\star}+\Lambda_n\frac{T_\star(\tilde\alpha_n^{(0)})}{n}\right)^{-1}(\tilde\Sigma_{n-}^{(0)})_{\star\star}. 
\end{align*}
Using the eigenvalue bound of $\left(\frac{X^TD_n^{-2}(\beta)X}{n}\right)$ in Assumption \ref{(A1)}, we can also bound the moments of $(\tilde\Sigma_{n-}^{(0)})_{\star\star}$ and $\left((\tilde\Sigma_{n-}^{(0)})_{\star\star}\right)^{-1}$ for sufficiently large $n$. Then, rewriting \[\frac{T_\star(\tilde\alpha_n^{(0)})}{\sqrt{n}} = \left\{\diag\left(\frac{1}{n^{1/4}(\tilde\alpha^{(0)}_{n1} - \alpha_{01}) + n^{1/4}\alpha_{01}}, ... , \frac{1}{n^{1/4}(\tilde\alpha^{(0)}_{np_0} - \alpha_{0p_0}) + n^{1/4}\alpha_{0p_0}}\right)\right\}^2,\]
we will consider conditioning on the event 
\[H_n = \{
%\tilde\alpha_{n}^{(0)} : 
\Vert n^{1/4}(\tilde\alpha^{(0)}_{n\star} - \alpha_{0\star})\Vert \leq h_0\},\]
for a fixed constant $h_0 > 0$, so that
\begin{align*}
&\sup_n \mathbb{E}\left[\left\Vert \sqrt{n} \left(A^{-1}(\tilde\Sigma_{n-}^{(0)})_{\star\star} - I_{p_0}\right) \right\Vert^s\right] \\
&= \sup_n \mathbb{E}\left[\left\Vert \sqrt{n} \left(A^{-1}(\tilde\Sigma_{n-}^{(0)})_{\star\star} - I_{p_0}\right) \right\Vert^s ; H_n\right] 
\nn\\
&{}\qquad + \sup_n \mathbb{E}\left[\left\Vert \sqrt{n} \left(A^{-1}(\tilde\Sigma_{n-}^{(0)})_{\star\star} - I_{p_0}\right) \right\Vert^s ; H^c_n\right].
\end{align*}
Since $n^{1/4}\alpha_{0i} \to \pm\infty$ for $1 \leq i \leq p_0$,$\frac{T_\star(\tilde\alpha_n^{(0)})}{\sqrt{n}}$ is bounded on $H_n$  for sufficiently large $n$, which in turn ensures that $\sup_{n > N} \mathbb{E}\left[\left\Vert \sqrt{n} \left(A^{-1}(\tilde\Sigma_{n-}^{(0)})_{\star\star} - I_{p_0}\right) \right\Vert^s ; H_n\right] < \infty$ for some $N \geq 1$. As $\left\Vert\left(A^{-1}(\tilde\Sigma_{n-}^{(0)})_{\star\star} - I_{p_0}\right)\right\Vert$ is bounded,
\begin{align*}
\mathbb{E}\left[\left\Vert \sqrt{n} \left(A^{-1}(\tilde\Sigma_{n-}^{(0)})_{\star\star} - I_{p_0}\right) \right\Vert^s ; H^c_n\right]
&\lesssim n^{s/2}\mathbb{P}\left(\Vert n^{1/4}(\tilde\alpha^{(0)}_{n\star} - \alpha_{0\star})\Vert > h_0\right) \\
&\leq n^{s/2} \frac{n^{-m/4}\mathbb{E}\left[\Vert n^{1/2}(\tilde\alpha^{(0)}_{n\star} - \alpha_{0\star})\Vert^m\right]}{h_0^m},
\end{align*}
where the latter inequality follows from Markov's inequality. By choosing $m > 2s$, along with Lemma \ref{lem1}, we can conclude 
\[\sup_n \mathbb{E}\left[\left\Vert \sqrt{n} \left(A^{-1}(\tilde\Sigma_{n-}^{(0)})_{\star\star} - I_{p_0}\right) \right\Vert^s\right] < \infty.\]
Finally, by uniformly bounding the norms of $A, B, B^{T}$ and $S^{-1}$ as we did in the proof of Theorem \ref{thm2}, we also get 
\[\sup_{n > N} \mathbb{E}\left[\left\Vert \sqrt{n}\left(A^{-1}BS^{-1}B^T \left(A^{-1}(\tilde\Sigma_{n-}^{(0)})_{\star\star}-I_{p_0}\right)\right)\right\Vert^s\right] < \infty.\]
Next, we turn to $I_{1,n}''$. Once again, it suffices to show that \[\sup_{n>N} \mathbb{E}\left[\left\Vert  \sqrt{n} \left((M^{-1}_{\star\circ})^T (\tilde\Sigma_{n-}^{(0)})_{\star\star} + M^{-1}_{\circ\circ}(\tilde\Sigma_{n-}^{(0)})_{\star\circ}^T\right)\right\Vert^s\right] < \infty.\]
By the second line of \eqref{eq7.17}, we can rewrite the expression as
\[\sup_{n>N} \mathbb{E}\left[\left\Vert  \sqrt{n} \left(S^{-1}B^T \left(I_{p_0}-A^{-1}(\tilde\Sigma_{n-}^{(0)})_{\star\star}\right)\right)\right\Vert^s\right] < \infty,\]
which can be handled analogously to $I_{1,n}'$.
We conclude that $\sup_n\mathbb{E}[\| I_{1,n}\|^s]<\infty$.

We next need to show that $\sup_{n>N} \mathbb{E}\left[ \Vert I_{2,n}\Vert^s\right] < \infty.$ Recall that \[I_{2,n} = \left(\tilde\Sigma_{n-}^{(0)} + \Lambda_n \frac{T(\tilde\alpha_n^{(0)})}{n}\right)^{-1}\frac{1}{\sqrt{n}}X^T D_n^{-2}(\tilde\beta_n^{(0)})D_n(\beta_0)\epsilon,\]
and by Assumption \ref{(A1)}\[\left\Vert \left(\tilde\Sigma_{n-}^{(0)} + \Lambda_n \frac{T(\tilde\alpha_n^{(0)})}{n}\right)^{-1} \right\Vert < \left\Vert \left(\tilde\Sigma_{n-}^{(0)}\right)^{-1}\right\Vert \leq K_X.\] Therefore, it suffices to show 
\begin{align*}
    & \sup_{n>N} \mathbb{E}\left[ \left\Vert \frac{1}{\sqrt{n}}X^T D_n^{-2}(\tilde\beta_n^{(0)})D_n(\beta_0)\epsilon \right\Vert^s\right] 
    \nn\\
    &\leq  \sup_{n>N} \mathbb{E}\left[ \sup_\beta\left\Vert \frac{1}{\sqrt{n}}X^T D_n^{-2}(\beta)D_n(\beta_0)\epsilon \right\Vert^s\right] < \infty.
\end{align*}
Let $H_n(\beta) = \frac{1}{\sqrt{n}}X^T D_n^{-2}(\beta)D_n(\beta_0)\epsilon$.
As $\Theta_\beta$ is a bounded convex domain in $\mathbb{R}^q$, 
the Sobolev inequality ensures that for any $s > q\vee 2$, we have
\[
\mathbb{E}\left[ \sup_\beta \Vert H_n(\beta)\Vert^s\right] \leq C_{\Theta_\beta,q} \left(\sup_\beta\mathbb{E}\left[\Vert H_n(\beta)\Vert^s\right] + \sup_\beta\mathbb{E}\left[\Vert H_n'(\beta)\Vert^s\right]\right),\]
where $C_{\Theta_\beta,q}$ is a constant independent of $n$, and $H_n'(\beta)$ is the partial derivative of $H_n$ with respect to $\beta$. We can then write $H_n(\beta) = \frac{1}{\sqrt{n}}\sumi \chi_{ni}(\beta)\epsilon_i$ and $H_n(\beta) = \frac{1}{\sqrt{n}}\sumi \chi_{ni}'(\beta)\epsilon_i$, where both $\chi_i(\beta)$ and $\chi_i'(\beta)$ are non-random and 
satisfy that $\sup_{n\ge n_0}\max_{i\le n}\|\chi_{ni}(\beta)\|_{\infty} \vee \|\chi'_{ni}(\beta)\|_{\infty} <\infty$ for $n_0$ large enough because $\Vert X_{ni}\Vert$ and $\Vert Z_{ni}\Vert$ are both bounded by Assumption \ref{(A2)}. Finally, the Burkholder inequality gives
\begin{align*}
\sup_\beta \mathbb{E}\left[\left\Vert \frac{1}{\sqrt{n}}\sumi \chi_{ni}(\beta)\epsilon_i \right\Vert^s\right] &\lesssim \sup_\beta \frac{1}{n} \sumi \mathbb{E}\left[\Vert \chi_i(\beta)\epsilon_i\Vert^s\right]\\
&\lesssim \frac{1}{n}\sumi \mathbb{E}\left[\Vert \epsilon_i\Vert^s\right] < \infty,
\end{align*}
where the final inequality follows from $\mathbb{E}[|\epsilon_1|^s] < \infty$ by Assumption \ref{(A3)}. The result corresponding to $\chi_{ni}'(\beta)$ follows analogously.
\qed

\section{Proof of Theorem \ref{thm3}}\label{secC}
We only show the result for $k=0$, the general case can be obtained similarly. A similar proof is also shown in Dai et al. (2018) \cite{BAR2018}. Denoting $\tilde\Sigma^{(0)}_{n-} := n^{-1}X^T D_n^{-2}(\tilde\beta_n^{(0)})X$, we write  
\begin{align*}
    \tilde\alpha_n^{(1)} &= \left(\tilde\Sigma^{(0)}_{n-}+\frac{\Lambda_n}{n} T(\tilde\alpha_n^{(0)})\right)^{-1}\frac{1}{n}X^TD^{-2}_n(\tilde\beta_n^{(0)})Y\\
    &= \left(\tilde\Sigma^{(0)}_{n-}+\frac{\Lambda_n}{n} T(\tilde\alpha_n^{(0)})\right)^{-1}\tilde\Sigma^{(0)}_{n-}\alpha_0 
    \nn\\
    &{}\qquad + \left(\tilde\Sigma^{(0)}_{n-}+\frac{\Lambda_n}{n} T(\tilde\alpha_n^{(0)})\right)^{-1}\frac{1}{n}X^T D_n^{-2}(\tilde\beta_n^{(0)})D_n(\beta_0)\epsilon.
\end{align*}
Multiplying both sides by $ \left(\tilde\Sigma^{(0)}_{n-}\right)^{-1}\left(\tilde\Sigma^{(0)}_{n-}+\frac{\Lambda_n}{n} T(\tilde\alpha_n^{(0)})\right)$ yields
\begin{align*}              
\left(\tilde\Sigma^{(0)}_{n-}\right)^{-1}\left(\tilde\Sigma^{(0)}_{n-}+\frac{\Lambda_n}{n} T(\tilde\alpha_n^{(0)})\right)\tilde\alpha_n^{(1)} = \alpha_0 + \left(\tilde\Sigma^{(0)}_{n-}\right)^{-1}\frac{X^T D_n^{-2}(\tilde\beta_n^{(0)})D_n(\beta_0)\epsilon}{n},
\end{align*}
which can be rearranged to
\begin{align*}
    \tilde\alpha_n^{(1)} - \alpha_0 + \left(\tilde\Sigma^{(0)}_{n-}\right)^{-1}\frac{\Lambda_n}{n} T(\tilde\alpha_n^{(0)})\tilde\alpha_n^{(1)} = \left(\tilde\Sigma^{(0)}_{n-}\right)^{-1}\frac{X^T D_n^{-2}(\tilde\beta_n^{(0)})D_n(\beta_0)\epsilon}{n},
\end{align*}
namely
\begin{equation}\label{eq7.22}
    \begin{pmatrix}
    \tilde\alpha_{n\star}^{(1)} - \alpha_{0\star} \\
    \tilde\alpha_{n\circ}^{(1)}
    \end{pmatrix}
    + \left(\tilde\Sigma^{(0)}_{n-}\right)^{-1}\frac{\Lambda_n}{n} T(\tilde\alpha_n^{(0)})
    \begin{pmatrix}
    \tilde\alpha_{n\star}^{(1)}\\
    \tilde\alpha_{n\circ}^{(1)}
    \end{pmatrix}
    =\left(\tilde\Sigma^{(0)}_{n-}\right)^{-1}\frac{X^T D_n^{-2}(\tilde\beta_n^{(0)})D_n(\beta_0)\epsilon}{n}.
\end{equation}
As $\left(\tilde\Sigma^{(0)}_{n-}\right)^{-1} = \mathcal{O}_p(1)$ and $n^{-1}X^TD^{-2}_n(\tilde\beta_n^{(0)})D_n(\beta_0)\epsilon = \mathcal{O}_p\left(\frac{1}{\sqrt{n}}\right)$, the right-hand side of \eqref{eq7.22} is $\mathcal{O}_p\left(\frac{1}{\sqrt{n}}\right)$. For ease of reading, we now let $S_{n}:=\left(\tilde\Sigma^{(0)}_{n-}\right)^{-1}$ for the remainder of the proof. Then, focusing on the lower $(p - p_0)$ components of \eqref{eq7.22} gives
\begin{equation}\label{eq7.23}
    \left\Vert \tilde\alpha^{(1)}_{n\circ} + \frac{1}{n}(S_{n})_{\star\circ}^T\Lambda_{n\star}T_\star(\tilde\alpha_n^{(0)})\tilde\alpha^{(1)}_{n\star} + \frac{1}{n}(S_{n})^{-1}_{\circ\circ}\Lambda_{n\circ}T_\circ(\tilde\alpha_n^{(0)})\tilde\alpha_{n\circ}^{(1)}\right\Vert = \mathcal{O}_p\left(\frac{1}{\sqrt{n}}\right).
\end{equation}
We then proceed similarly as in Lemma 1 of Dai et al.(2018) \cite{BAR2018} by first considering the order of the second term on the left-hand side of \eqref{eq7.23}: 
\begin{align*}
   \left \Vert \frac{1}{n}(S_{n})_{\star\circ}^T\Lambda_{n\star}T_\star(\tilde\alpha_n^{(0)})\tilde\alpha^{(1)}_{n\star} \right\Vert &\leq \frac{1}{n} \left \Vert (S_{n})_{\star\circ}^T \right \Vert  \Vert \Lambda_{n\star} \Vert \Vert  T_\star(\tilde\alpha_n^{(0)}) \Vert \Vert \tilde\alpha^{(1)}_{n\star} \Vert\\
    &= \frac{1}{\sqrt{n}} \left \Vert (S_{n})_{\star\circ}^T \right \Vert \left\Vert \frac{\Lambda_{n\star}}{\sqrt{n}} \right\Vert \Vert  T_\star(\tilde\alpha_n^{(0)}) \Vert \Vert \tilde\alpha^{(1)}_{n\star} \Vert\\
    &= \smallO_p\left(\frac{1}{\sqrt{n}}\right),
\end{align*}
where $\left\Vert (S_{n})_{\star\circ}^T \right\Vert = \mathcal{O}_p(1)$ follows from Assumption \ref{(A1)} as we also note $\Vert  T_\star(\tilde\alpha_n^{(0)}) \Vert = \smallO_p(1)$ and $\left\Vert \frac{\Lambda_{n\star}}{\sqrt{n}} \right\Vert = \smallO(1)$. Hence,
\begin{equation}\label{eq7.24}
    \left \Vert \tilde\alpha^{(1)}_{n\circ} + \frac{1}{n}\left(\tilde\Sigma^{(0)}_{n-}\right)^{-1}_{\circ\circ}\Lambda_{n\circ}T_\circ(\tilde\alpha_n^{(0)})\tilde\alpha_{n\circ}^{(1)} \right \Vert = \mathcal{O}_p\left(\frac{1}{\sqrt{n}}\right).
\end{equation}
Using the triangle inequality, we estimate the left-hand side of \eqref{eq7.24} as follows:
\[\left\Vert \tilde\alpha^{(1)}_{n\circ} + \frac{1}{n}(S_{n})_{\circ\circ}\Lambda_{n\circ}T_\circ(\tilde\alpha_n^{(0)})\tilde\alpha_{n\circ}^{(1)} \right\Vert \geq \left\Vert \frac{1}{n}(S_{n})_{\circ\circ}\Lambda_{n\circ}T_\circ(\tilde\alpha_n^{(0)})\tilde\alpha_{n\circ}^{(1)} \right\Vert - \Vert \tilde\alpha^{(1)}_{n\circ} \Vert.\]

Below, we will bound the norm of $n^{-1}(S_{n})_{\circ\circ}\Lambda_{n\circ}T_\circ(\tilde\alpha_n^{(0)})\tilde\alpha_{n\circ}^{(1)}$ from below and the norm of $\tilde\alpha^{(1)}_{n\circ}$ from above in \eqref{eq7.25} and \eqref{eq7.26}, respectively.

As $(S_{n})^{-1}_{\circ\circ}$ is symmetric positive definite, it is diagonalizable by orthonormal eigenvectors. 
With probability tending to one, $(S_{n})_{\circ\circ}$ has a spectral decomposition $(S_{n})_{\circ\circ} = \sum_{i=1}^{p-p_0}\tau_ie_ie_i^T$ for eigenvalues $\tau_i \in [\frac{1}{K_X},{K_X}]$ and orthonormal eigenvectors $e_i$ by Assumption \ref{(A1)}. Using the spectral decomposition, we obtain
\begin{align*}
    &\left \Vert (S_{n})_{\circ\circ}\Lambda_{n\circ}T_\circ(\tilde\alpha_n^{(0)})\tilde\alpha_{n\circ}^{(1)} \right \Vert\\
    =& \left \Vert \sum_{i=1}^{p-p_0}\tau_ie_ie_i^T \Lambda_{n\circ}T_\circ(\tilde\alpha_n^{(0)})\tilde\alpha_{n\circ}^{(1)} \right \Vert \\
    =& \left(\sum_{i,j=1}^{p-p_0}\tau_i\tau_j \tilde\alpha_{n\circ}^{(1) T}\Lambda_{n\circ}T_\circ(\tilde\alpha_n^{(0)})e_ie_i^T e_je_j^T \Lambda_{n\circ}T_\circ(\tilde\alpha_n^{(0)})\tilde\alpha_{n\circ}^{(1)}\right)^{1/2}\\
    =& \left(\sum_{i=1}^{p-p_0}\tau_i^2\tilde\alpha_{n\circ}^{(1)T} \Lambda_{n\circ}T_\circ(\tilde\alpha_n^{(0)})e_ie_i^T \Lambda_{n\circ}T_\circ(\tilde\alpha_n^{(0)})\tilde\alpha_{n\circ}^{(1)}\right)^{1/2}\\ 
    \geq& \frac{1}{K_X} \Vert \Lambda_{n\circ}T_\circ(\tilde\alpha_n^{(0)})\tilde\alpha_{n\circ}^{(1)} \Vert.
\end{align*} 
Now denoting \[d_{n\circ}^{(1)} := \left(\frac{\tilde\alpha^{(1)}_{n,p_0+1}}{\tilde\alpha^{(0)}_{n,p_0+1}},...,\frac{\tilde\alpha^{(1)}_{n,p}}{\tilde\alpha^{(0)}_{n,p}}\right),\]
with probability tending to 1, we have
\begin{equation}\label{eq7.25}
\begin{split}
    &\left \Vert \frac{1}{n} (S_{n})_{\circ\circ} \Lambda_{n\circ}T_\circ(\tilde\alpha_n^{(0)})\tilde\alpha_{n\circ}^{(1)} \right \Vert \\
    \geq& \frac{1}{K_X} \left \Vert \frac{1}{n} \Lambda_{n\circ} T_\circ(\tilde\alpha_n^{(0)})\tilde\alpha_{n\circ}^{(1)} \right \Vert\\
    =& \frac{\min_{p_0<j\leq p}\lambda_{nj}}{K_X} \left \Vert \frac{1}{n} T^{1/2}_\circ(\tilde\alpha_n^{(0)})d_{n\circ}^{(1)} \right \Vert\\
    \geq& \frac{1}{n}\frac{\min_{p_0<j\leq p}\lambda_{nj}}{K_X}\frac{1}{\max_{p_0<j\leq p}|\tilde\alpha_{nj}^{(0)}|} \Vert d^{(1)}_{n\circ} \Vert.
\end{split}
\end{equation}
Moving on to $\Vert \tilde\alpha^{(1)}_{n\circ} \Vert$, we have 
\begin{equation}\label{eq7.26}
    \begin{split}
    \Vert \tilde\alpha^{(1)}_{n\circ} \Vert &= \Vert T^{-1/2}_\circ(\tilde\alpha_n^{(0)})d_{n\circ}^{(1)} \Vert
    \leq \max_{p_0<j\leq p}|\tilde\alpha_{nj}^{(0)}| \Vert d^{(1)}_{n\circ} \Vert. \\      
    \end{split}
\end{equation}
Combining \eqref{eq7.25} and \eqref{eq7.26} and writing $M_n \coloneqq \max_{p_0<j\leq p}|\tilde\alpha_{nj}^{(0)}|$ we obtain 
\begin{align*}
    \left\Vert \tilde\alpha^{(1)}_{n\circ} + \frac{1}{n}(S_{n})_{\circ\circ}\Lambda_{n\circ}T_\circ(\tilde\alpha_n^{(0)})\tilde\alpha_{n\circ}^{(1)} \right\Vert &\geq \left\Vert \frac{1}{n}(S_{n})_{\circ\circ}\Lambda_{n\circ}T_\circ(\tilde\alpha_n^{(0)})\tilde\alpha_{n\circ}^{(1)} \right\Vert - \Vert \tilde\alpha^{(1)}_{n\circ} \Vert \\
   &\geq \frac{1}{n}\frac{\min_{p_0<j\leq p}\lambda_{nj}}{K_X}\frac{1}{M_n} \Vert d^{(1)}_{n\circ} \Vert - M_n \Vert d^{(1)}_{n\circ} \Vert\\
   &= \left(\frac{\min_{p_0<j\leq p}\lambda_{nj} - nK_XM_n^2}{nK_X(M_n)} \right) \Vert d^{(1)}_{n\circ} \Vert
\end{align*}
Recall from \eqref{eq7.24} that $\left\Vert \tilde\alpha^{(1)}_{n\circ} + \frac{1}{n}\left(\tilde\Sigma^{(0)}_{n-}\right)^{-1}_{\circ\circ}\Lambda_{n\circ}T_\circ(\tilde\alpha_n^{(0)})\tilde\alpha_{n\circ}^{(1)} \right\Vert = \mathcal{O}_p\left(\frac{1}{\sqrt{n}}\right)$ and note that $M_n = \mathcal{O}_p\left(\frac{1}{\sqrt{n}}\right)$. Then for sufficiently large $n$, we can estimate $\Vert d_{n\circ}^{(1)} \Vert$ as follows:
\begin{align*}
    \Vert d_{n\circ}^{(1)} \Vert &\leq \left(\frac{nK_X(M_n)}{\min_{p_0<j\leq p}\lambda_{nj} - nK_XM_n^2} \right) \left\Vert \tilde\alpha^{(1)}_{n\circ} + \frac{1}{n}\left(\tilde\Sigma^{(0)}_{n-}\right)^{-1}_{\circ\circ}\Lambda_{n\circ}T_\circ(\tilde\alpha_n^{(0)})\tilde\alpha_{n\circ}^{(1)} \right\Vert \\
    & = \mathcal{O}_p\left(\frac{1}{\min_{p_0<j\leq p}\lambda_{nj}}\right)
    \cip 0
\end{align*}
as $\min_{p_0<j\leq p}\lambda_{nj} \to \infty$. 
Finally, for sufficiently large $n$,
\begin{align*}
    \dfrac{\Vert \tilde\alpha^{(1)}_{n\circ} \Vert}{\Vert \tilde\alpha^{(0)}_{n\circ}  \Vert} \leq \dfrac{ \Vert d_{n\circ}^{(1)} \Vert \max_{p_0 \leq j \leq p}|\tilde\alpha^{(0)}_{nj}|}{\Vert \tilde\alpha^{(0)}_{n\circ}  \Vert} \leq \Vert d_{n\circ}^{(1)} \Vert \cip 0. 
\end{align*}
The proof above relied only on the fact that $\tilde\alpha^{(0)}_{n\circ} = \mathcal{O}_p\left(\frac{1}{\sqrt{n}}\right)$. For $k \geq 1$ we repeat the proof and obtain
\begin{align*}
\dfrac{\Vert \tilde\alpha^{(k+1)}_{n\circ} \Vert}{\Vert \tilde\alpha^{(k)}_{n\circ}  \Vert} \cip 0,
\end{align*}
as claimed in \eqref{eq3.12}. 
% We provide a sketch proof of \eqref{eq3.13} as the proof is very similar. 
To show \eqref{eq3.13}, we first observe
\begin{align*}
    \tilde\beta_n^{\ast(1)} = \left(\frac{Z^{\ast T}\Zast }{n} + \gamastn\frac{S(\bastinit)}{n}\right)^{-1}\frac{1}{n}Z^{\ast T}L_n(\tilde\alpha_n^{(1)}),
\end{align*}
which can be rewritten as
\begin{align*}
    \begin{pmatrix}
    \tilde\beta_{n\star}^{\ast(1)} - \bast_{0\star} \\
    \tilde\beta_{n\circ}^{\ast(1)}
    \end{pmatrix}
    + \left(\frac{Z^{\ast T}\Zast }{n} \right)^{-1}\frac{\gamastn}{n} S(\bastinit)
    \begin{pmatrix}
    \tilde\beta_{n\star}^{\ast(1)}\\
    \tilde\beta_{n\circ}^{\ast(1)}
    \end{pmatrix}
    =\mathcal{O}_p\left(n^{-\frac{c}{2}}\right).
\end{align*}
We may bound similarly as before, but as it is not proven that $\tilde\beta_n^{\ast(k)} = \mathcal{O}_p(n^{-1/2})$, we can only appeal to $\tilde\beta_n^{\ast(k)} = \mathcal{O}_p(n^{-c})$ for $c < 1/2$, thus requiring the slightly stronger condition: 
\begin{equation*}
    \frac{\min_{q_0<j\leq q+1}\gamma_{nj}}{n^{\delta}} \to \infty 
\end{equation*}
for some $\delta \in (0,\frac{1}{2})$. The final step is identical to the case of $\alpha$ so \eqref{eq3.13} is proved. In the case where $\mathbb{E}\left[|\epsilon_1|^{-(1+\tau)}\right] < \infty$, $\tilde\beta_n^{\ast(k)}$ is $\sqrt{n}$-tight for $k \geq 0$, which means the proof follows if $\min_{q_0<j\leq q+1}\gamma_{nj} \to \infty$.

%\newpage
%\bibliographystyle{abbrv}
%\bibliography{KH.bib}

%%
\def\cprime{$'$} \def\polhk#1{\setbox0=\hbox{#1}{\ooalign{\hidewidth \lower1.5ex\hbox{`}\hidewidth\crcr\unhbox0}}} \def\cprime{$'$} \def\cprime{$'$}

% End of the source
\end{document}